\newcommand{\R}{\mathbb{R}} 
\newcommand{\N}{\mathbb{N}} 
\DeclareMathOperator{\dive}{\operatorname{div}} 
\DeclareMathOperator*{\esssup}{ess\,sup} 
\newcommand{\Hm}{\mathcal{H}} 
\newcommand{\stc}{{\sigma_\mathrm{st}}} 
\newtheorem{theorem}{Theorem}[section]
\newtheorem{lemma}[theorem]{Lemma}
\newtheorem{definition}[theorem]{Definition}
\newtheorem{assumptions}[theorem]{Assumptions}
\newtheorem{proposition}[theorem]{Proposition}
\newtheorem{corollary}[theorem]{Corollary}
\newtheorem{remark}[theorem]{Remark}
\newcounter{countproofstep} 
\newcommand{\proofstep}{
  \addtocounter{countproofstep}{1}%
  \par
  \textit{Step~\thecountproofstep. }%
  \nopagebreak%
}
\numberwithin{equation}{section}%
\begin{document}

%
%
\title[Pressure Reconstruction for the 2PNS Equations]{Pressure Reconstruction for Weak Solutions of the Two-Phase Incompressible Navier--Stokes Equations with Surface Tension}
\date{\today}
\author{Helmut Abels}
\address{Fakult\"at f\"ur Mathematik, Universit\"at Regensburg\\Universit\"atsstr.\ 31, 93053 Regensburg, Germany}
\email{helmut.abels@ur.de}
\author{Johannes Daube}
\address{Abteilung f\"ur Angewandte Mathematik, Universit\"at Freiburg\\Hermann-Herder-Str.\ 10, 79104 Freiburg, Germany}
\email{johannes.daube@mathematik.uni-freiburg.de}
\author{Christiane Kraus}
\address{Weierstra{\ss}-Institut\\Mohrenstr.\ 39, 10117 Berlin, Germany}
\email{kraus@wias-berlin.de}
%
%

%
%
\begin{abstract}
  For the two-phase incompressible Navier--Stokes equations with surface tension, we derive an appropriate weak formulation incorporating a variational formulation using divergence-free test functions.
  We prove a consistency result to justify our definition and, under reasonable regularity assumptions, we reconstruct the pressure function from the weak formulation.
\end{abstract}
\maketitle
%
%
\section{Introduction}\label{sec:introduction}
%
%
We consider a two-phase flow of two incompressible Newtonian fluids.
The isothermal flow in a bounded domain $\Omega \subset \R^n$, $n=2,3$, and on a finite time interval $[0,T]$ is described in Eulerian coordinates by a velocity field $v\colon \Omega \times [0,T] \rightarrow \R^n$ and a scalar pressure function $p\colon \Omega \times [0,T]\rightarrow \R$.
For each time $t \in [0,T]$, a hypersurface $\Gamma(t)$ separates $\Omega$ into two disjoint subsets $\Omega^-(t)$ and $\Omega^+(t)$ of $\Omega$, i.e., we have $\Omega = \Omega^-(t) \cup \Gamma(t) \cup \Omega^+(t)$ and $\Gamma(t) = \partial \Omega^-(t) \cap \Omega$.
The regions $\Omega^-(t)$ and $ \Omega^+(t)$ are referred to as bulk phases, and correspond to different phases of the fluid.
Physically they are characterised by (constant) densities $0 < \beta_1 \leq \beta_2$ and corresponding viscosities $\mu(\beta_i) > 0$, $i=1,2$.
For convenience, throughout this paper we will require that the interface is compactly contained in the fluid domain, that is, 
$\Gamma(t) \subset\subset \Omega$.
In particular, the interface does not intersect the domain boundary, i.e., $\Gamma(t) \cap \partial \Omega = \emptyset$.
This, in turn, means that $\Omega^-(t) \subset\subset \Omega$ and $\Gamma(t) = \partial \Omega^-(t) = \partial \Omega^-(t) \cap\partial \Omega^+(t)$.
\par
Assuming the interface $\Gamma$ to be sufficiently regular, and the velocity $v$ and the pressure $p$ to be sufficiently smooth functions on $\Omega\setminus\Gamma(t) =  \Omega^-(t) \cup \Omega^+(t)$, such that the one-sided limits on $\Gamma(t)$ from $\Omega^\pm(t)$ exist, the flow is described by the following free-boundary problem
\begin{align}
  \beta_1 \partial_t v + \beta_1 (v \cdot \nabla) v - \mu(\beta_1) \Delta v + \nabla p &= 0 &&\ \text{in} \ \Omega^-(t),\label{eq:sim:bulk1}\\
  \beta_2 \partial_t v + \beta_2 (v \cdot \nabla) v - \mu(\beta_2) \Delta v + \nabla p &= 0 &&\ \text{in} \ \Omega^+(t),\label{eq:sim:bulk2}\\
  \dive(v)&= 0 &&\ \text{in} \ \Omega \setminus \Gamma(t),\label{eq:sim:divergence_free}\\ 
  [v]&= 0 &&\ \text{on} \ \Gamma(t),\label{eq:sim:velocity_jump}\\ 
  V &= v \cdot \nu^- &&\ \text{on} \ \Gamma(t), \label{eq:sim:v_normal_velocity}\\    
  \left[ T \right] \nu^- &= - 2 \stc \kappa \nu^- &&\ \text{on} \ \Gamma(t), \label{eq:sim:pressure_jump}\\
  v(\cdot,t) &= 0 &&\ \text{on} \ \partial \Omega, \label{eq:sim:bc}\\
  v(\cdot,0) &= v^{(i)} &&\ \text{in} \ \Omega \label{eq:sim:ic}
\end{align}
for every $t\in [0,T]$.
The initial phases $\Omega^-(0) = \Omega^{-,(i)} \subset\subset \Omega$, $\Omega^+(0) = \Omega \setminus \overline{\Omega^{-,(i)}}$ and the initial position $\Gamma^{(i)} = \partial ( \Omega^{-,(i)} )$ of the interface, as well as the initial velocity $v^{(i)} \colon \Omega \rightarrow \R^n$, are given.
The unknowns are the velocity $v( \cdot, t ) \colon \Omega \setminus \Gamma (t) \rightarrow \R^n$, the pressure $p( \cdot, t ) \colon \Omega \setminus \Gamma (t) \rightarrow \R$ and the interface (free-boundary) $\Gamma (t)$.
Here and in the sequel, $[\, \cdot \,]$ stands for the jump across the interface $\Gamma(t)$ in the direction of the exterior unit-normal field $\nu^-(\cdot,t)$ of $\partial \Omega^-(t)$.
For a given quantity $f$ and $x\in \Gamma(t)$, this is, explicitly,
\begin{equation*}
  [f](x,t) = \lim_{\xi \searrow 0} \big(f(x+\xi \nu^-(x,t),t) - f(x-\xi \nu^-(x,t),t) \big).
\end{equation*}
By $V = V(\cdot,t)$ and $\kappa = \kappa(\cdot,t)$, we denote the normal velocity and the mean curvature of $\Gamma(t)$, for fixed $t$, both taken with respect to $\nu^-(\cdot,t)$.
Moreover, in \eqref{eq:sim:pressure_jump}, $\stc > 0$ denotes the surface-tension constant, and the stress tensor $T = T( v, p )$ is defined by
\begin{equation*}
  T(v(t),p(t))
  = 
  \begin{cases}
    2\mu(\beta_1)Dv(t) - p(t) I \ & \text{in} \ \Omega^-(t),\\
    2\mu(\beta_2)Dv(t) - p(t) I \ & \text{in} \ \Omega^+(t).
  \end{cases}
\end{equation*}
The partial differential equations \eqref{eq:sim:bulk1}--\eqref{eq:sim:divergence_free} are the incompressible Navier--Stokes equations.
Equations~\eqref{eq:sim:bulk1} and~\eqref{eq:sim:bulk2} model the conservation of linear momentum and the incompressibility condition~\eqref{eq:sim:divergence_free} corresponds to conservation of mass in each bulk phase.
These partial differential equations in the bulk phases are coupled by the interface conditions \eqref{eq:sim:velocity_jump}--\eqref{eq:sim:pressure_jump}: the velocity field is continuous across the interface $\Gamma(t)$ by \eqref{eq:sim:velocity_jump}.
Due to \eqref{eq:sim:v_normal_velocity}, the interface $\Gamma(t)$ is transported purely by the bulk fluid flow.
The interface condition \eqref{eq:sim:pressure_jump} is (a dynamic version of) the Young--Laplace law relating the jump of the normal stress $\left[ T \right] \nu^-$ to the mean curvature $\kappa$.
The velocity boundary condition \eqref{eq:sim:bc} is the no-slip condition at the boundary $\partial \Omega$ of the fluid domain $\Omega$.
With \eqref{eq:sim:ic}, we prescribe initial values $v^{(i)}\colon \Omega\rightarrow \R^n$ for the velocity.
\par
The question of (unique) solvability of the free-boundary problem \eqref{eq:sim:bulk1}--\eqref{eq:sim:ic} and related systems has been studied by many authors: in the framework of H\"older spaces, Denisova and Solonnikov first studied the corresponding two-phase Stokes problem \cite{denisova_solonnikov_stokes}.
Later they proved well-posedness of \eqref{eq:sim:bulk1}--\eqref{eq:sim:ic} for appropriate initial data \cite{denisova_solonnikov_ns}.
Existence results in the context of maximal $L^r$-regularity (so-called strong solutions), which are even real analytic for positive times, are due to Pr\"uss and Simonett \cite{pruess_simonett} and K\"ohne, Pr\"uss and Wilke \cite{koehne_pruess_wilke}, and in a varifold context due to Plotnikov \cite{plotnikov_nonnewtonian, plotnikov_comp_stokes}.
In general, the existence of weak solutions to \eqref{eq:sim:bulk1}--\eqref{eq:sim:ic} is an open problem, cf.\ \cite[Section~2.2]{abels_garcke}.
\par
This paper summarizes the result of \cite[Chapter~4]{daube} and is organised as follows:
In \textbf{Section~\ref{sec:notation}} we will introduce our notation and provide some preliminary results.
In \textbf{Section~\ref{sec:weak_solution}} we will derive a weak notion of solutions which uses divergence-free test functions.
This will lead to a weak formulation that does not incorporate the pressure function.
\par
In the remainder of the paper we shall justify our approach and reconstruct a pressure function from the weak formulation:
In \textbf{Section~\ref{sec:sobolev_time_dep}} we shall provide the functional-analytic background and introduce Sobolev spaces on time-dependent domains.
In \textbf{Section~\ref{sec:sim:consistency}}, under reasonable regularity assumptions, we will reconstruct the pressure function from the weak formulation.
%
%
%
\section{Notation and Preliminaries}\label{sec:notation}
%
%
%
Let $U\subset \R^d$, $d\in \N$, be open.
The space of smooth and compactly supported functions in $U$ is denoted by $C_{0}^\infty( U )$ and $C_{0,\sigma}^\infty( U )$ is the subspace of $C_{0}^\infty( U )$ of divergence-free functions.
Moreover, for $Q \subset \R^d$, we define
\begin{equation*}
  C_{(0)}^\infty (Q) = \{ u \colon Q \rightarrow \R \, : \, u = \left. U \right|_Q, \ U \in  C_{0}^\infty(\R^d), \ \mathrm{supp}( u ) \subset Q \}.
\end{equation*}
For a Banach space X, its dual is designated by $X^\ast$.
For a measurable set $M \subset \R^d$ and $r \in [1,\infty]$, $L^r(M)$ and $L^r(M;X)$ denote the standard Lebesgue spaces of scalar and $X$-valued functions, respectively.
If $M=(a,b)$, we simply write $L^r(a,b;X)$.
$W^{k,r}( U )$ is the Sobolev space of order $k\in \N$ and integrability exponent $r$.
By $W^{k,r}_0( U )$, we denote the closure of $C^\infty_0( U )$ in $W^{k,r}( U )$, and we set $H^k( U ) = W^{k,2}( U )$ and $H^k_0( U ) = W^{k,2}_0( U )$.
The corresponding dual spaces we abbreviate as $W^{-1,q}( U ) = ( W^{1,q^\prime}_0( U ) )^\ast$, where $q^\prime = \tfrac{q}{q-1}$, and $H^{-1}( U ) = W^{-1,2}( U )$. 
Furthermore, $L^2_\sigma( U )$ and $H^1_{0,\sigma}( U )$ denote the closure of $C^\infty_{0,\sigma}( U )$ in $L^2( U )$ and $H^1( U )$, respectively.
For $r \in [1,\infty)$, we define
\begin{equation*}
  L^r_{\sigma}(U)
  = \overline{C_{0,\sigma}^\infty(U)}^{\| \cdot \|_{L^r(U)^d }}
  \ \text{and} \
  W^{1,r}_{0,\sigma}(U)
  = \overline{C_{0,\sigma}^\infty(U)}^{\| \cdot \|_{W^{1,r}(U)^d }}.
\end{equation*}
Furthermore, for $r = 2$, we use the notation $H^1_{0,\sigma}(U) = W^{1,2}_{0,\sigma}(U)$.
The space $W^{1,r}_{0,\sigma}(U)$ has the following useful characterisation; see \cite[Lemma~II.2.2.3]{sohr}.
\begin{lemma}[Characterisation of $W^{1,r}_{0,\sigma}(\Omega)$]\label{lem:sigma_characterisations}
  For $d\geq 2$ and $r \in (1,\infty)$, let $\Omega \subset \R^d$ be a bounded domain with Lipschitz boundary.
  Then there holds
  \begin{equation*}
    W^{1,r}_{0,\sigma}(\Omega) = \big\{ u \in W^{1,r}_0(\Omega)^d \,:\, \dive(u) = 0 \big\}.
  \end{equation*}
\end{lemma}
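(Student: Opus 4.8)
Write $\mathcal{V}_r := \{ u \in W^{1,r}_0(\Omega)^d : \dive(u) = 0 \}$ for the right-hand side. The inclusion $W^{1,r}_{0,\sigma}(\Omega) \subseteq \mathcal{V}_r$ is the routine half: every $u \in C^\infty_{0,\sigma}(\Omega)$ lies in $\mathcal{V}_r$, and $\mathcal{V}_r$ is closed in $W^{1,r}_0(\Omega)^d$ since it is the kernel of the bounded linear operator $\dive \colon W^{1,r}_0(\Omega)^d \to L^r(\Omega)$. As $W^{1,r}_{0,\sigma}(\Omega)$ is by definition the $W^{1,r}$-closure of $C^\infty_{0,\sigma}(\Omega)$, it therefore stays inside $\mathcal{V}_r$. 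The entire content of the lemma is the reverse inclusion, namely the density of $C^\infty_{0,\sigma}(\Omega)$ in $\mathcal{V}_r$ for the $W^{1,r}$-norm.

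To prove this density I would proceed as follows. Fix $u \in \mathcal{V}_r$. By the very definition of $W^{1,r}_0(\Omega)$ there is a sequence $\phi_k \in C^\infty_0(\Omega)^d$ with $\phi_k \to u$ in $W^{1,r}(\Omega)^d$. These approximants are smooth and compactly supported but fail to be divergence-free; still, $g_k := \dive(\phi_k) \in C^\infty_0(\Omega)$ satisfies $\int_\Omega g_k \, dx = 0$ and $g_k \to \dive(u) = 0$ in $L^r(\Omega)$. The idea is to subtract a smooth, compactly supported corrector carrying exactly the divergence $g_k$, which is what the Bogovskii operator supplies: on a bounded Lipschitz domain and for $r \in (1,\infty)$ there is a bounded linear operator $\mathcal{B}$ from $\{ g \in L^r(\Omega) : \int_\Omega g \, dx = 0 \}$ to $W^{1,r}_0(\Omega)^d$ with $\dive(\mathcal{B} g) = g$ and $\| \mathcal{B} g \|_{W^{1,r}} \leq C \| g \|_{L^r}$, and --- this is the property I would lean on --- $\mathcal{B}$ maps smooth, compactly supported data of vanishing mean to smooth, compactly supported fields. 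Granting this, set $\psi_k := \phi_k - \mathcal{B} g_k$. Then $\psi_k \in C^\infty_0(\Omega)^d$, $\dive(\psi_k) = g_k - g_k = 0$, hence $\psi_k \in C^\infty_{0,\sigma}(\Omega)$, and $\| \psi_k - u \|_{W^{1,r}} \leq \| \phi_k - u \|_{W^{1,r}} + C \| g_k \|_{L^r} \to 0$. Thus $u \in W^{1,r}_{0,\sigma}(\Omega)$, which closes the argument and, incidentally, explains the restriction $r \in (1,\infty)$, since the boundedness $L^r \to W^{1,r}_0$ of $\mathcal{B}$ rests on Calder\'on--Zygmund theory and fails at the endpoints.

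The main obstacle is the construction of $\mathcal{B}$ with the stated smoothness- and support-preserving property on a general bounded Lipschitz domain. For a domain star-shaped with respect to an interior ball this is classical: $\mathcal{B}$ is an explicit singular integral whose kernel is built from a fixed cutoff supported in that ball, its $L^r \to W^{1,r}_0$ bound follows from the singular-integral theory, and a direct inspection of the formula shows that it preserves $C^\infty_0$. For a general Lipschitz $\Omega$ one covers it by finitely many subdomains each star-shaped with respect to a ball, decomposes the datum as a finite sum $g = \sum_j g_j$ of smooth zero-mean pieces supported in the individual subdomains (via a partition of unity together with smooth fixed-function correctors that redistribute the mass so that each $g_j$ integrates to zero), and sums the corresponding local fields. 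It is precisely this decomposition that carries the technical weight, whereas in the present paper I may simply invoke the standard theory of the Bogovskii operator as developed in \cite{sohr}. Note that the more elementary route of mollifying the zero-extension of $u$ fails when pursued naively, because applying a partition of unity directly to $u$ destroys the constraint $\dive(u) = 0$ (one picks up a term $\nabla \chi \cdot u$); repairing this leads back to a Bogovskii-type correction, which is why I would organise the proof around that operator from the outset.
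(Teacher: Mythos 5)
Your argument is correct and complete: the forward inclusion via closedness of the kernel of $\dive$ is routine, and the density of $C^\infty_{0,\sigma}(\Omega)$ in $\{u\in W^{1,r}_0(\Omega)^d : \dive(u)=0\}$ via correcting an approximating sequence $\phi_k\in C^\infty_0(\Omega)^d$ with the Bogovskii operator applied to $\dive(\phi_k)$ (which has zero mean and tends to $0$ in $L^r$) is exactly the standard constructive proof; the smoothness- and support-preservation of $\mathcal{B}$ on star-shaped domains and the patching to general Lipschitz domains that you defer to are indeed available in the cited literature (e.g.\ \cite[Theorem~III.3.3]{galdi}). The paper itself gives no proof but merely cites \cite[Lemma~II.2.2.3]{sohr}, so your write-up supplies essentially the argument behind that reference rather than deviating from it.
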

It is convenient to introduce the spaces
\begin{equation*}
  C_0^\infty( (0,T) ; C_{0,\sigma}^\infty(\Omega) )
  = \big\{ \psi \in C_0^\infty(\Omega \times (0,T) )^d \,:\, \dive( \psi ) = 0 \big\}
\end{equation*}
and
\begin{equation*}
  C_0^\infty( [0,T) ; C_{0,\sigma}^\infty(\Omega) )
  = \big\{ \left. \psi \right|_{\Omega \times [0,T)} \,:\,  \psi \in C_0^\infty(\Omega \times (-1,T) )^d,\ \dive( \psi ) = 0 \big\}.
\end{equation*}
%
\subsection{Functions of Bounded Variation and Sets of Finite Perimeter}
%
For $N\in\N$ and a finite $\R^N$-valued Radon measure $\mu$ and a Borel set $E \subset U$, the total-variation measure of $E$ is defined by
\begin{equation*}
  \left| \mu \right|(E)
  = \sup \sum_{m=1}^\infty { \left| \mu (E_m) \right| },
\end{equation*}
where the supremum is taken over all pairwise disjoint partitions $(E_m)_{m\in \N} \subset X$ of measurable sets $E_m$, $m\in \N$, such that $E = \bigcup_{m=1}^\infty E_m$.
A function $u\in L^1(U)$ is said to be of bounded variation if its distributional gradient $\nabla u$ is a finite $\R^d$-valued Radon measure.
The set of all functions of bounded variation is denoted by $BV(U)$, and the set $BV(U, M)$ contains all functions $u \in BV(U)$, such that $u \in M$ for a.e.\ $x\in U$.
A measurable set $E \subset U$ has finite perimeter in $U$ if its characteristic function $\chi_E$ belongs to $BV(U)$.
By the structure theorem of sets of finite perimeter, there holds $\left|\nabla \chi_E\right|(U) = \Hm^{d-1}( U \cap \partial^\ast E )$, where $\Hm^{d-1}$ is the $(d-1)$-dimensional Hausdorff measure and $\partial^\ast E$ is the so-called reduced boundary of $E$, and, moreover, for all $\psi \in C_0^\infty( U )^d$,
\begin{equation*}
  \int_E \dive( \psi ) \,\mathrm{d} x
  = \int_{\partial^\ast E} \psi \cdot \nu_E  \,\mathrm{d}  \Hm^{d-1}( x ),
\end{equation*}
where $\nu_E (x) = - \lim_{\delta \searrow 0} \tfrac{\nabla \chi_E (B_\delta(x))}{\left| \nabla \chi_E \right|(B_\delta(x))}$ is the generalized outer unit normal; cf.\ e.g.\ \cite[Theorem~3.36]{ambrosio}.
Note that, if $E$ has $C^1$-boundary, then $\partial^\ast E = \partial E$ and $\nu_E$ coincides with the usual outer unit normal.
%
\subsection{Hypersurfaces}\label{sec:hypersurfaces}
%
We briefly recall some facts from differential geometry.
For a fuller treatment, cf.\ for instance \cite[Section~2]{deckelnick}, \cite[Section~16.1]{gilbarg} and \cite{masato}.
We call $\Gamma \subset \R^d$, $d \geq 2$, a $C^k$-hypersurface, $k\in \N$, if, for each $x_0 \in \Gamma$, there exist an open neighbourhood $U\subset \R^d$ of $x_0$ and a function $u \in C^k(U)$ with
\begin{equation*}
  U\cap \Gamma = \{ x\in U \,: \, u(x) = 0\}
  \ \text{and} \
  \nabla u(x) \neq 0 \ \text{for all} \  x \in U \cap \Gamma.
\end{equation*}
In the case $k=2$, we briefly call $\Gamma$ a hypersurface.  
For a hypersurface $\Gamma$, the space $C^1(\Gamma)$ consists of all functions $f\colon \Gamma \rightarrow \R$ such that there exist a neighbourhood $U \subset \R^d$ of $\Gamma$ and a function $g\in C^1( U )$ with $f = \left. g \right|_U$.
The tangent space $T_x \Gamma$ of a hypersurface $\Gamma \subset \R^d$, at a point $x\in \Gamma$, is defined by
\begin{equation*}
  T_x \Gamma = \{ \tau \in \R^d \,:\, \tau \cdot \nabla u(x) = 0 \}.
\end{equation*}
A hypersurface $\Gamma$ is called oriented if there exists a function $\nu \in C^1(\Gamma)^d$ such that, for all $x\in \Gamma$, there holds $\left| \nu (x) \right| = 1$ and $\nu (x) \perp T_x \Gamma$, i.e., $\nu (x) \cdot \tau =0$ for any $\tau \in T_x \Gamma$.
The function $\nu$ is called unit-normal field (or, briefly, normal).
On an oriented hypersurface $\Gamma \subset \R^d$ with unit-normal field $\nu$, for $f\in C^1( \Gamma )$, the tangential gradient $\nabla_\Gamma f \colon \Gamma \rightarrow \R^d$ is defined as
\begin{equation*}
  \nabla_\Gamma f = (\delta_1 f,\dots,\delta_d f) = \left. \big( \nabla f  - ( \nabla f \cdot \nu ) \nu \big) \right|_\Gamma.
\end{equation*}
For $u \in C^1( \Gamma )^d$, the tangential divergence $\dive_\Gamma (u) \colon \Gamma \rightarrow \R$ is defined as
\begin{equation*}
  \dive_\Gamma (u) = \sum_{i=1}^{d} \delta_i u_i.
\end{equation*}

\begin{proposition}\label{prop:properties_curvarute_matrix}
  For an oriented hypersurface $\Gamma \subset \R^d$ with unit-normal field $\nu$, define $\mathcal{K} = (\mathcal{K}_{ij})_{i,j=1,\dots,d}\colon \Gamma \rightarrow \R^{d\times d}$ by
  \begin{equation}\label{eq:def_curvature_matrix}
    \mathcal{K}_{ij}(x) = - \delta_i \nu_j(x)
  \end{equation}
  for $i,j=1,\dots,d$ and $x\in \Gamma$.
  Then, for every $x\in \Gamma$, the matrix $\mathcal{K}(x)$ is symmetric and $\nu(x)$ is an eigenvector of $\mathcal{K}(x)$ with corresponding eigenvalue $0$.
\end{proposition}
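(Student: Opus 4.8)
The plan is to verify the two assertions separately, exploiting throughout the fact that the tangential gradient $\nabla_\Gamma f = (\delta_1 f,\dots,\delta_d f)$ depends only on the restriction $f|_\Gamma$ and not on the particular $C^1$-extension chosen to compute it. Indeed, if a $C^1$-function $g$ vanishes on $\Gamma$, then $\nabla g \cdot \tau = 0$ for every tangent vector $\tau$, so $\nabla g$ is parallel to $\nu$ along $\Gamma$ and hence $\nabla_\Gamma g = \nabla g - (\nabla g \cdot \nu)\nu = 0$ there; applied to the difference of two extensions of $\nu_j$, this shows that $\delta_i \nu_j$ is well defined independently of the extension. This freedom is what makes the computation tractable.

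For the eigenvalue statement I would argue as follows. The tangential derivatives obey the product rule $\delta_i(fg) = f\,\delta_i g + g\,\delta_i f$, which follows at once from $\delta_i f = \sum_k (\delta_{ik} - \nu_i\nu_k)\,\partial_k f$ together with the product rule for ordinary partial derivatives. Since $\sum_j \nu_j^2 = |\nu|^2 \equiv 1$ is constant on $\Gamma$, applying $\delta_i$ yields $0 = \delta_i\big(\sum_j \nu_j^2\big) = 2\sum_j \nu_j\,\delta_i\nu_j$. Consequently $\big(\mathcal{K}(x)\nu(x)\big)_i = \sum_j \mathcal{K}_{ij}\nu_j = -\sum_j \nu_j\,\delta_i\nu_j = 0$ for every $i$, so $\nu(x)$ is an eigenvector of $\mathcal{K}(x)$ with eigenvalue $0$. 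This part needs no special extension and is the easy half.

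For the symmetry I would fix $x_0 \in \Gamma$ and use the local $C^2$-defining function $u$ from the definition of a hypersurface, so that near $x_0$ one has $\Gamma = \{u = 0\}$ with $\nabla u \neq 0$; choosing the sign appropriately, $\nu = \nabla u / |\nabla u|$ is a $C^1$-extension of the normal. Writing $S_{ij} = \partial_i\partial_j u / |\nabla u|$, which is symmetric because $u \in C^2$ and mixed partials commute, and setting $a_i = \sum_k S_{ik}\nu_k$ and $b = \sum_{k,l}\nu_k S_{kl}\nu_l$, a direct computation of $\partial_i\nu_j = \partial_i(\partial_j u/|\nabla u|)$ and of the normal derivative $(\nabla\nu_j\cdot\nu)$, substituted into $\delta_i\nu_j = \partial_i\nu_j - (\nabla\nu_j\cdot\nu)\,\nu_i$, gives the representation
\begin{equation*}
  \delta_i \nu_j = S_{ij} - \nu_i\, a_j - \nu_j\, a_i + b\,\nu_i\nu_j .
\end{equation*}
Every term on the right-hand side is manifestly invariant under interchange of $i$ and $j$, so $\delta_i\nu_j = \delta_j\nu_i$, that is $\mathcal{K}_{ij} = \mathcal{K}_{ji}$.

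The main obstacle is precisely this symmetry: for a generic extension of $\nu$ the Jacobian $\partial_i\nu_j$ is \emph{not} symmetric, and projecting onto the tangent space does not obviously restore symmetry. The resolution is the observation, justified by the extension-independence noted above, that one may work with the gradient-type extension $\nu = \nabla u/|\nabla u|$, for which the only potentially antisymmetric contribution would come from $\partial_i\partial_j u - \partial_j\partial_i u$, which vanishes. The remaining step is the bookkeeping that reduces $\delta_i\nu_j$ to the displayed symmetric form; this is routine but must be carried out with care to confirm that the correction terms involving $a_i$ and $b$ indeed pair up symmetrically.
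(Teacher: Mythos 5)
Your proof is correct. Note that the paper itself does not prove this proposition at all --- it simply cites \cite[Section~2.3]{deckelnick} and \cite[Theorem~2.10]{masato} --- so what you have written is a self-contained version of the standard level-set argument used in those references. Both halves check out: the eigenvalue statement follows from applying $\delta_i$ to $|\nu|^2\equiv 1$ on $\Gamma$ (using the tangential product rule and the fact that $\nabla_\Gamma$ only sees the restriction to $\Gamma$, which you justify correctly), and for the symmetry your identity
\begin{equation*}
  \delta_i \nu_j = S_{ij} - \nu_i a_j - \nu_j a_i + b\,\nu_i\nu_j,
  \qquad S_{ij}=\frac{\partial_i\partial_j u}{|\nabla u|},\quad a_i=\sum_k S_{ik}\nu_k,\quad b=\sum_{k,l}\nu_k S_{kl}\nu_l,
\end{equation*}
is exactly what the computation with $\nu=\nabla u/|\nabla u|$ yields: one finds $\partial_i\nu_j=S_{ij}-\nu_j a_i$ and $\sum_k\nu_k\partial_k\nu_j=a_j-\nu_j b$, and substituting into $\delta_i\nu_j=\partial_i\nu_j-(\nabla\nu_j\cdot\nu)\nu_i$ gives the displayed symmetric form. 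The two points you rightly flag as needing care --- extension-independence of $\delta_i$ (so that the gradient-type extension may be used) and the local sign choice making $\nabla u/|\nabla u|$ agree with the prescribed orientation --- are both handled adequately; the $C^2$ regularity of the defining function $u$ assumed in the paper's definition of a hypersurface is precisely what makes $S_{ij}$ symmetric.
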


\begin{proof}
  See~\cite[Section~2.3]{deckelnick} or \cite[Theorem~2.10]{masato}.
\end{proof}

The foregoing proposition allows one to define the mean curvature of an oriented hypersurface.

\begin{definition}[Mean curvature]\label{def:mean_curvature}
  For an oriented hypersurface $\Gamma \subset \R^d$ with unit-normal field $\nu$, let $x\in \Gamma$ and let $\mathcal{K}$ be defined as in \eqref{eq:def_curvature_matrix}.
  \begin{enumerate}
    \item
      The principal curvatures of $\Gamma$ in $x$ are the eigenvalues $\kappa_1(x), \dots, \kappa_{d-1}(x)$ of $\mathcal{K}(x)$ belonging to eigenvectors orthogonal to $\nu(x)$. 
    
    \item
    The mean curvature $\kappa \colon \Gamma \rightarrow \R$ is the trace of $\mathcal{K}$, i.e.,
      \begin{equation*}
        \kappa
        = \sum_{i=1}^{d} \mathcal{K}_{ii}
        = \sum_{i=1}^{d-1} \kappa_{i}.
      \end{equation*}
  \end{enumerate}
\end{definition}

Note that, in view of the above definitions, there holds $\kappa = - \dive_\Gamma(\nu)$.
Moreover, for $f \in C^1( \Gamma )$ and $i=1,\dots,d$, there holds the integration-by-parts formula
\begin{equation}\label{eq:int_parts_surface}
  \int_{\Gamma} \delta_i f \,\mathrm{d} \Hm^{d-1}(x) = - \int_{\Gamma} f \kappa \nu_i \,\mathrm{d} \Hm^{d-1}(x);
\end{equation}
see \cite[Lemma~16.1]{gilbarg}.

For the treatment of time-dependent interfaces, we need the notion of evolving hypersurface, and have to define its normal velocity.

\begin{definition}[Evolving hypersurfaces]\label{def:evolving_hypersurfaces}
  Let $I \subset \R$ be an interval.
  For a family $(\Gamma(t))_{t\in I} \subset \R^d$ of oriented hypersurfaces, define
  \begin{equation}\label{eq:spacetime_Gamma}
    \Gamma = \bigcup_{t\in I} \big( \Gamma(t) \times \{ t \} \big).
  \end{equation}
  \begin{enumerate}
    \item
      $(\Gamma(t))_{t\in I}$ is called a $C^{2,1}$-family of evolving oriented hypersurfaces, or, briefly, a family of evolving hypersurfaces, if $\Gamma$ is a $C^1$-hypersurface in $\R^{d+1}$ and there exists a function $\nu \in C^1( \Gamma )^d$ such that $\Gamma(t)$ is oriented by $\nu(\cdot,t)$ for every $t \in I$.
 
    \item
      The normal velocity $V \in C^0( \Gamma )$ of $(\Gamma(t))_{t\in I}$ at a point $(x_0,t_0) \in \Gamma$ is given by
      \begin{equation*}
        V(x_0,t_0) = \eta^\prime(t_0) \cdot \nu(x_0,t_0),
      \end{equation*}
      where $\eta \in C^1( I_0 )^d$, for some subinterval $I_0 \subset I$ with $t_0 \in I_0$, such that
      $\eta(t_0) = x_0$ and $\eta(t) \in \Gamma(t)$ for all $t \in I_0$.
\end{enumerate}
\end{definition}

\begin{remark}
  The definition of the normal velocity $V$ does not depend on the choice of the function $\eta$.
  Moreover, for any $t\in I$, there holds $V( \cdot, t) \in C^1( \Gamma(t) )$; see~\cite[Theorem~5.5]{masato}.
\end{remark}

Finally, we provide some transport identities for integrals, which allow one to calculate time derivatives of integrals over time-dependent domains and hypersurfaces.

\begin{theorem}[Transport theorem]\label{thm:transport_theorem}
  For some interval $I \subset \R$, let $(\Gamma(t))_{t\in I}$ be a family of evolving hypersurfaces in the sense of Definition \ref{def:evolving_hypersurfaces}.
  In addition, for every $t\in I$, assume that $\Gamma(t) = \partial\Omega(t)$ for some open, bounded set $\Omega(t) \subset \R^d$.
  Denote by $\nu = \nu(t)$ the unit-normal field of $\Gamma(t)$ pointing outward to $\Omega(t)$, by $\kappa = \kappa(t)$ the mean curvature of $\Gamma(t)$ and by $V=V(t)$ the normal velocity of $(\Gamma(t))_{t\in I}$, respectively, with respect to $\nu (t)$.
  \begin{enumerate}
    \item 
  If $U\subset\R^{d+1}$ is an open set such that
  \begin{equation*}
    \bigcup_{t\in I} \Big( \overline{ \Omega(t) } \times \{t\} \Big) \subset U,
  \end{equation*}
  then, for every $f\in C^1(U)$, there holds
  \begin{equation*}
    \frac{\mathrm d}{\mathrm d t} \int_{\Omega(t)} f \,\mathrm{d} x = \int_{\Omega(t)} \partial_t f \,\mathrm{d} x + \int_{\Gamma(t)} f V  \,\mathrm{d} \Hm^{d-1}(x).
  \end{equation*}
 
 \item
  Let $\Gamma$ be as in \eqref{eq:spacetime_Gamma}.
  If $f\in C^1( \Gamma )$, then, for ant $t\in I$, there holds
  \begin{equation*}
  \begin{split}
    &\frac{\mathrm d}{\mathrm d t} \int_{\Gamma(t)} f \,\mathrm{d} \Hm^{d-1}(x)\\
    ={}&  \int_{\Gamma(t)} \partial_t f \,\mathrm{d} \Hm^{d-1}(x) - \int_{\Gamma(t)} f \kappa V  \,\mathrm{d} \Hm^{d-1}(x)
    + \int_{\Gamma(t)} ( \nabla f \cdot \nu ) V \,\mathrm{d} \Hm^{d-1}(x).
  \end{split}
  \end{equation*}
  In particular,
  \begin{equation*}
    \frac{\mathrm d}{\mathrm d t} \Hm^{d-1}( \Gamma(t) )
    = - \int_{\Gamma(t)} \kappa V  \,\mathrm{d} \Hm^{d-1}(x).
  \end{equation*}
\end{enumerate}
\end{theorem}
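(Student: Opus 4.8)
The plan is to transform both integrals to fixed reference configurations by means of a single flow map and then to differentiate under the integral sign. First I would construct a transport vector field: since $(\Gamma(t))_{t\in I}$ is a family of evolving hypersurfaces (Definition~\ref{def:evolving_hypersurfaces}), $\Gamma$ is a $C^1$-hypersurface in $\R^{d+1}$ with $C^1$-normal, and I would extend the field $V\nu$ off $\Gamma$ --- for instance via the nearest-point projection in a tubular neighbourhood together with a cut-off --- to obtain a vector field $w=w(x,t)$, defined on a space-time neighbourhood of $\bigcup_{t}\overline{\Omega(t)}\times\{t\}$, that is $C^1$ in $x$, continuous in $t$, and satisfies $w\cdot\nu = V$ on $\Gamma(t)$. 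Let $\Phi(\cdot,t)$ denote the associated flow, i.e.\ the solution of $\partial_t\Phi(x,t)=w(\Phi(x,t),t)$ with $\Phi(x,t_0)=x$ for a fixed $t_0\in I$ and $t$ near $t_0$. The decisive geometric observation is that the space-time field $(w,1)$ is tangent to $\Gamma$, because $w\cdot\nu=V$ is precisely the condition that $(w,1)$ be orthogonal to the space-time normal of $\Gamma$ (which is parallel to $(\nu,-V)$); consequently the flow leaves $\Gamma$ invariant, so $\Phi(\cdot,t)$ maps $\Omega(t_0)$ diffeomorphically onto $\Omega(t)$ and $\Gamma(t_0)$ onto $\Gamma(t)$.

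For part~(1) I would change variables $y=\Phi(x,t)$ to write $\int_{\Omega(t)} f\,\mathrm{d}y = \int_{\Omega(t_0)} (f\circ\Phi)\,J\,\mathrm{d}x$ with Jacobian $J=\det D_x\Phi>0$. Since the domain of integration is now fixed, I may differentiate under the integral; using $\Phi(\cdot,t_0)=\mathrm{id}$, $J(\cdot,t_0)=1$ and the Jacobi identity $\partial_t J|_{t_0}=\dive w$, this yields $\int_{\Omega(t_0)}\big(\partial_t f + w\cdot\nabla f + f\,\dive w\big)\,\mathrm{d}x = \int_{\Omega(t_0)}\big(\partial_t f + \dive(fw)\big)\,\mathrm{d}x$. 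An application of the divergence theorem together with $w\cdot\nu=V$ on $\Gamma(t_0)$ then gives the asserted formula, since $t_0$ was arbitrary.

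For part~(2) I would pull $\Gamma(t)=\Phi(\Gamma(t_0),t)$ back to the fixed surface $\Gamma(t_0)$, carrying the transported area element; differentiating this pulled-back measure produces the material derivative $\partial_t f + w\cdot\nabla f$ together with the factor $f\,\dive_\Gamma w$. Decomposing $w=V\nu+w_\tau$ into its normal and tangential parts and using $w\cdot\nabla f = V(\nu\cdot\nabla f)+w_\tau\cdot\nabla_\Gamma f$ as well as $\dive_\Gamma(V\nu)=-V\kappa$ (which follows from $\nabla_\Gamma V\cdot\nu=0$ and $\dive_\Gamma\nu=-\kappa$), the integrand becomes $\partial_t f + (\nabla f\cdot\nu)V - f\kappa V + \dive_\Gamma(f w_\tau)$. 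The last term integrates to zero over the closed surface $\Gamma(t_0)=\partial\Omega(t_0)$ by the surface integration-by-parts formula \eqref{eq:int_parts_surface} applied componentwise (the resulting boundary term vanishes because $w_\tau\cdot\nu=0$). This is exactly the claimed identity, and the special case $f\equiv 1$ immediately yields $\tfrac{\mathrm d}{\mathrm d t}\Hm^{d-1}(\Gamma(t))=-\int_{\Gamma(t)}\kappa V\,\mathrm{d}\Hm^{d-1}(x)$.

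The main obstacle I anticipate is not the algebra but the construction of the flow with adequate regularity: the space-time data are only $C^1$ (and $V$ is merely continuous in $t$), so some care is needed to produce a field $w$ that is $C^1$ in space and continuous in time, which is what makes the changes of variables legitimate and, especially in part~(2), justifies differentiating the transported area element. Once the flow is available with these properties, the remaining manipulations --- the Jacobi identity, the decomposition of $w$, and the vanishing of the tangential surface divergence --- are routine.
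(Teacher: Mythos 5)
Your argument is correct in substance, but note that the paper does not prove this theorem at all: its ``proof'' is a citation to the appendix of the Deckelnick--Dziuk--Elliott survey and to Kimura's lecture notes, and the flow-map/Reynolds argument you propose is essentially the proof given in those references. So you are reconstructing the standard proof rather than diverging from the paper. The skeleton is sound: the observation that $w\cdot\nu=V$ is exactly the condition that $(w,1)$ be tangent to the space-time hypersurface $\Gamma$ (whose normal is parallel to $(\nu,-V)$) is the right key point; the Jacobi identity plus the divergence theorem gives part (1); and the decomposition $f\dive_\Gamma w = -f\kappa V + \dive_\Gamma(fw_\tau)$ with $w_\tau\cdot\nabla f=w_\tau\cdot\nabla_\Gamma f$, followed by \eqref{eq:int_parts_surface} to kill the tangential divergence (the boundary term vanishes since $w_\tau\cdot\nu=0$), gives part (2). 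Two points deserve more care than a parenthetical remark. First, the invariance of $\Gamma$ under the flow: tangency of a merely continuous field to a $C^1$ manifold does not by itself force invariance; you should run the standard argument that a solution of the ODE constrained to $\Gamma$ exists (Peano, in a local parametrisation) and coincides with the ambient flow by uniqueness of the latter, and apply it in both time directions to get that $\Phi^t_{t_0}$ maps $\Gamma(t_0)$ \emph{onto} $\Gamma(t)$. Second, knowing $\Phi^t_{t_0}(\partial\Omega(t_0))=\partial\Omega(t)$ does not immediately give $\Phi^t_{t_0}(\Omega(t_0))=\Omega(t)$ (a homeomorphism could a priori send the bounded component to the wrong side); a short continuity-in-$t$ argument is needed here. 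Finally, the regularity bookkeeping you flag is real: with $V$ only continuous in $t$ the Liouville formula still holds (the variational equation has continuous right-hand side), but to differentiate the transported area element in part (2) one effectively uses the $C^{2,1}$ regularity built into Definition~\ref{def:evolving_hypersurfaces}, which is exactly the setting of the cited sources. None of these is a fatal gap, but each should be spelled out if you intend this as a self-contained proof rather than the citation the paper settles for.
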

  
\begin{proof}
  See \cite[Appendix]{deckelnick} or \cite[Theorems~6.1 and 6.4]{masato}.
\end{proof}  
%
%
\section{The Notion of Weak Solutions}\label{sec:weak_solution}
%
%
The free-boundary problem \eqref{eq:sim:bulk1}--\eqref{eq:sim:ic} incorporates two disjoint subregions $\Omega^-(t)$ and $\Omega^+(t)$ of the domain $\Omega$, where the fluid is of constant density $\beta_1$ and $\beta_2$, respectively.
This means that the associated density function is given by
\begin{equation}\label{eq:def_rho}
  \rho(t)
  = \beta_1 \chi_{\Omega^-(t)} + \beta_2 \chi_{\Omega^+(t)}
   \ \text{in} \ \Omega.
\end{equation}  
Note that $\rho(t) = ( \beta_1 - \beta_2 ) \chi_{\Omega^-(t)} + \beta_2$ in $\Omega \setminus \Gamma(t)$.
Moreover, the nature of $\rho(t)$ is encoded in the characteristic function
\begin{equation}\label{eq:def_chi}
  \chi(t)
  = \chi_{\Omega^-(t)}
  = \frac{\rho(t) - \beta_2}{\beta_1 - \beta_2},
\end{equation}
and vice versa.
In many situations, it is convenient to use that \eqref{eq:sim:bulk1} and \eqref{eq:sim:bulk2} are equivalent to
\begin{equation}\label{eq:sim:assos_density_bulk}
  \rho \partial_t v + \rho (v \cdot \nabla) v  - 2 \mu( \rho ) \dive( D v ) + \nabla p = 0 \ \text{in} \ \Omega \setminus \Gamma(t).
\end{equation}

To motivate a weak formulation, we consider sufficiently smooth solution triplets $(v,p, \Gamma)$ of \eqref{eq:sim:bulk1}--\eqref{eq:sim:ic}; see  Assumptions~\ref{ass:sim:smoothness} below.
More precisely, for the pair $(\rho, v)$, we derive a variational formulation for \eqref{eq:sim:assos_density_bulk} incorporating divergence-free test functions, an energy equality and a weak formulation of the pure transport of the interface \eqref{eq:sim:v_normal_velocity} in terms of a transport equation for $\chi$.

\begin{assumptions}[Existence of smooth solutions]\label{ass:sim:smoothness}
  Let the following conditions be satisfied.
  \begin{enumerate}
  \item \textbf{Regularity of initial interface.}
    $\Gamma^{(i)}$ is a $C^2$-hypersurface, inducing a disjoint partition $\Omega = \Omega^{-,(i)} \cup \Gamma^{(i)} \cup \Omega^{+,(i)}$, such that
    \begin{equation*}
      \Gamma^{(i)} = \partial \Omega^{-,(i)} \subset\subset \Omega
      \ \text{and} \
      \Omega^{+,(i)}
      = \Omega \setminus \overline{\Omega^{-,(i)}}
      = \Omega \setminus( \Omega^{-,(i)} \cup \Gamma^{(i)} ).
    \end{equation*}
    Define the initial associated density function $\rho^{(i)}\colon \Omega \rightarrow \R$ by
    \begin{equation*}
      \rho^{(i)}(x) = \beta_1 \chi_{\Omega^{-,(i)}}(x) + \beta_2 \chi_{\Omega^{+,(i)}}(x) \ \text{for} \ x\in \Omega
    \end{equation*}
    and define $\chi^{(i)}\colon \Omega \rightarrow \R$ by
    \begin{equation}\label{eq:def_chi_ic}
      \chi^{(i)}(x) = \chi_{\Omega^{-,(i)}}(x) \ \text{for} \ x\in \Omega.
    \end{equation}
    
  \item \textbf{Regularity of initial velocity.}
    $v^{(i)} \colon \Omega \rightarrow \R^n$ belongs to $C^0(\overline \Omega)^n$.
    Additionally, the restrictions $\left. v^{(i)} \right|_{\Omega^{\pm, (i)}}$ to $\Omega^{\pm, (i)}$ satisfy
    \begin{equation*}
      \left. v^{(i)} \right|_{\Omega^{\pm, (i)}} \in C^1( \overline{\Omega^{\pm}_0} )^n
      \ \text{and} \ 
      \left. \dive( v^{(i)} )\right|_{\Omega^{\pm, (i)}} = 0.
    \end{equation*}

  \item \textbf{Existence of smooth solutions.}
    $(v,p,\Gamma)$ is a solution triplet satisfying equations \eqref{eq:sim:bulk1}--\eqref{eq:sim:ic} with the following regularity properties.
    \begin{enumerate}
    \item \textbf{Regularity of velocity and pressure.}
      There exist open sets $U^-,U^+ \subset \R^{n+1}$ with
      \begin{equation*}
        \bigcup_{t\in [0,T]} { \Big(\overline{\Omega^\pm(t)} \times \{t\} \Big) } \subset U^\pm
      \end{equation*}
      as well as functions $v^\pm \in C^2(U^\pm)^n$ and $p^\pm \in C^1(U^\pm)$ such that
      \begin{equation*}
        v=v^\pm \ \text{and} \ p = p^\pm \ \text{on} \ \bigcup_{t\in [0,T]} {\Big(\overline{\Omega^\pm(t)} \times \{t\}\Big) }.
      \end{equation*}

    \item \textbf{Regularity of interface.}
      $( \Gamma(t) )_{t \in [0,T] }$ is a family of evolving hypersurfaces in the sense of Definition \ref{def:evolving_hypersurfaces} such that
      $\Omega^-(t) \cup \Gamma(t) \cup \Omega^+(t)$ is a pairwise disjoint partition of $\Omega$ and 
      $\Gamma(t) = \partial \Omega^-(t) \subset\subset \Omega$ for all $t \in [0,T]$.
      Additionally, for $\Gamma = \bigcup_{t\in [0,T]} \big( \Gamma(t) \times \{ t \} \big)$, let $\nu^- \in C^0(\Gamma)^n$ be such that $\nu^-(\cdot,t)$ is the unit-normal field pointing outward to $\Omega^-(t)$ for all $t\in [0,T]$.
    
    \end{enumerate}
  \end{enumerate}
\end{assumptions}
%
\subsection{Variational Formulation}\label{ssec:sim:var_formulation}
%
In the spirit of the theory of the incompressible Navier--Stokes equations; see for example~\cite{boyer_fabrie, sohr}, we will use divergence-free
test functions in the weak formulation.
This choice leads to a weak formulation lacking the pressure function.
In order to justify this approach, one has to reconstruct the pressure from the weak formulation.

  For the treatment of time derivatives in \eqref{eq:sim:bulk1} and \eqref{eq:sim:bulk2} and for later use, we provide the following consequences of the transport theorem (Theorem~\ref{thm:transport_theorem}).
  
  \begin{lemma}[Transport identities]\label{lem:transport_identities}
    Suppose that Assumptions~\ref{ass:sim:smoothness} are valid.
    Then, for every $t\in(0,T)$ and every $\psi \in C^\infty_{(0)}(\Omega \times [0,T))^n$, the following statements hold true.
    \begin{enumerate}
    \item
      $\tfrac{\mathrm d}{\mathrm d t} \int_{\Omega} \rho v \cdot \psi \,\mathrm{d} x
      = \int_{\Omega \setminus \Gamma(t)}\rho \partial_t ( v \cdot \psi ) \,\mathrm{d} x + (\beta_1 - \beta_2) \int_{\Gamma(t)}  V (v \cdot \psi)  \,\mathrm{d} \Hm^{n-1}(x)$.
     \item
      $\tfrac{\mathrm d}{\mathrm d t} \int_{\Omega} \rho \left| v \right|^2  \,\mathrm{d} x
      = \int_{\Omega \setminus \Gamma(t)} \rho \partial_t \left| v \right|^2 \,\mathrm{d} x + (\beta_1 - \beta_2) \int_{\Gamma(t)}  V \left| v \right|^2 \,\mathrm{d} \Hm^{n-1}(x)$.
    \end{enumerate}
  \end{lemma}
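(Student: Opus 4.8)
The plan is to exploit that $\rho(t)$ is constant on each bulk phase, thereby reducing both identities to the transport theorem (Theorem~\ref{thm:transport_theorem}, part~(1)) applied separately on $\Omega^-(t)$ and on $\Omega^+(t)$. Since the two statements are proved by one and the same argument—with $v\cdot\psi$ replaced by $\lvert v\rvert^2$ in the second, both being twice continuously differentiable up to $\Gamma(t)$ from either side by the regularity postulated in Assumptions~\ref{ass:sim:smoothness}—I shall only describe the first.

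First I would use \eqref{eq:def_rho} together with the fact that $\Gamma(t)$ is Lebesgue-null to split
\begin{equation*}
  \frac{\mathrm d}{\mathrm d t}\int_\Omega \rho\,(v\cdot\psi)\,\mathrm{d}x
  = \beta_1\,\frac{\mathrm d}{\mathrm d t}\int_{\Omega^-(t)} v\cdot\psi\,\mathrm{d}x
  + \beta_2\,\frac{\mathrm d}{\mathrm d t}\int_{\Omega^+(t)} v\cdot\psi\,\mathrm{d}x .
\end{equation*}
Writing $f^\pm = v^\pm\cdot\psi$, the regularity of $v^\pm$ and the smoothness of $\psi$ give $f^\pm\in C^1(U^\pm)$, with $f^\pm = v\cdot\psi$ on the closed space-time tube over $\Omega^\pm(t)$. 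For the phase $\Omega^-(t)$, the interface $\Gamma(t)=\partial\Omega^-(t)$ is \emph{exactly} the full boundary and $\Omega^-(t)\subset\subset\Omega$ is open and bounded, so Theorem~\ref{thm:transport_theorem}(1) applies verbatim with $U=U^-$, $f=f^-$, and outward normal $\nu^-$, yielding $\frac{\mathrm d}{\mathrm d t}\int_{\Omega^-(t)} v\cdot\psi\,\mathrm{d}x = \int_{\Omega^-(t)}\partial_t(v\cdot\psi)\,\mathrm{d}x + \int_{\Gamma(t)} (v\cdot\psi)\,V\,\mathrm{d}\Hm^{n-1}(x)$.

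The phase $\Omega^+(t)$ is the delicate step, because its boundary is $\Gamma(t)\cup\partial\Omega$ rather than a single evolving hypersurface, so the transport theorem does not apply directly. To remedy this I would fix, once and for all, a smooth bounded open set $\tilde\Omega$ with $\operatorname{supp}(\psi)$ (projected to space) and $\bigcup_{t\in[0,T]}\overline{\Omega^-(t)}$ both compactly contained in $\tilde\Omega\subset\subset\Omega$; such a set exists since the spatial support of $\psi$ is compact in $\Omega$ and the space-time tube over $\Omega^-$ is compactly contained in $\Omega$. Then $\Omega^+(t)\cap\tilde\Omega = \tilde\Omega\setminus\overline{\Omega^-(t)}$ has boundary the disjoint union $\Gamma(t)\cup\partial\tilde\Omega$, an evolving hypersurface whose $\partial\tilde\Omega$-component is stationary, so Theorem~\ref{thm:transport_theorem}(1) applies to it. Because $\psi$ (hence $f^+$ and $\partial_t f^+$) vanishes on a neighbourhood of $\partial\tilde\Omega$, the integrals over $\tilde\Omega\setminus\overline{\Omega^-(t)}$ coincide with those over $\Omega^+(t)$ and the boundary contribution from $\partial\tilde\Omega$ drops out; the only surviving surface term is over $\Gamma(t)$, where the outward normal of $\Omega^+(t)$ is $-\nu^-$, so the pertinent normal velocity is $-V$. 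This produces $\frac{\mathrm d}{\mathrm d t}\int_{\Omega^+(t)} v\cdot\psi\,\mathrm{d}x = \int_{\Omega^+(t)}\partial_t(v\cdot\psi)\,\mathrm{d}x - \int_{\Gamma(t)} (v\cdot\psi)\,V\,\mathrm{d}\Hm^{n-1}(x)$.

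Finally, multiplying the two phase identities by $\beta_1$ and $\beta_2$ and adding, the bulk integrals recombine, again via \eqref{eq:def_rho}, into $\int_{\Omega\setminus\Gamma(t)}\rho\,\partial_t(v\cdot\psi)\,\mathrm{d}x$, while the two surface terms combine into $(\beta_1-\beta_2)\int_{\Gamma(t)} V\,(v\cdot\psi)\,\mathrm{d}\Hm^{n-1}(x)$, which is exactly~(1); statement~(2) follows by the identical computation with $\lvert v\rvert^2$ in place of $v\cdot\psi$. The main obstacle is precisely the bookkeeping for $\Omega^+(t)$ described above—reconciling its non-evolving outer boundary $\partial\Omega$ with the hypotheses of the transport theorem, and correctly tracking the sign flip $\nu^-\mapsto-\nu^-$, $V\mapsto-V$ arising from the reversed orientation—which is what generates the factor $(\beta_1-\beta_2)$ in the surface term.
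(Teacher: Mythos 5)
Your argument is correct and follows essentially the same route as the paper: split the integral by phases using \eqref{eq:def_rho}, apply Theorem~\ref{thm:transport_theorem}(1) on $\Omega^-(t)$ and on $\Omega^+(t)$ (with the sign flip $V\mapsto -V$ from the reversed orientation of $\nu^-$), and recombine; the paper simply writes ``likewise'' for the $\Omega^+$ phase where you spell out the cutoff-domain bookkeeping. One small caveat: your transfer of the argument to statement~(2) is not quite ``identical'', since $\lvert v\rvert^2$ has no compact spatial support and so the auxiliary domain $\widetilde\Omega$ cannot be chosen to absorb it; there the contribution from the outer boundary $\partial\Omega$ must instead be discarded because that component is stationary (normal velocity zero) and $v$ vanishes on it by \eqref{eq:sim:bc}.
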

  
  \begin{proof}
In view of \eqref{eq:sim:velocity_jump},
we simply write $v(t) = v^+(t) = v^-(t)$ on $\Gamma(t)$, where
    \begin{equation*}
      v^+(x,t) = \lim_{\xi \searrow 0} v(x+\xi \nu^-(x,t),t)
      \ \text{and} \
      v^-(x,t) = \lim_{\xi \searrow 0} v(x-\xi \nu^-(x,t),t).
    \end{equation*}
    Let $\psi \in C^\infty_{(0)}(\Omega \times [0,T))^n$.
    To prove the first statement, we apply Theorem~\ref{thm:transport_theorem} to obtain
    \begin{equation*}
      \frac{\mathrm d}{\mathrm d t} \int_{\Omega^-(t)} v \cdot \psi \,\mathrm{d} x
      = \int_{\Omega^-(t)} \partial_t ( v \cdot \psi )  \,\mathrm{d} x
        + \int_{\Gamma(t)}  V (v \cdot \psi)  \,\mathrm{d} \Hm^{n-1}(x)
    \end{equation*}
    and, likewise,
    \begin{equation*}
      \frac{\mathrm d}{\mathrm d t} \int_{\Omega^+(t)} v \cdot \psi \,\mathrm{d} x
      = \int_{\Omega^+(t)} \partial_t ( v \cdot \psi )  \,\mathrm{d} x
        - \int_{\Gamma(t)}  V (v \cdot \psi)  \,\mathrm{d} \Hm^{n-1}(x).
    \end{equation*} 

    Recalling the definition of $\rho$ from \eqref{eq:def_rho}, we infer that
    \begin{equation*}
    \begin{split}
       \frac{\mathrm d}{\mathrm d t} \int_{\Omega} \rho v \cdot \psi \,\mathrm{d} x
      ={}& \int_{\Omega \setminus \Gamma(t)}\rho \partial_t ( v \cdot \psi ) \,\mathrm{d} x + (\beta_1 - \beta_2) \int_{\Gamma(t)}  V (v \cdot \psi)  \,\mathrm{d} \Hm^{n-1}(x).
    \end{split}
    \end{equation*}
    The second claim now follows analogously, with $v$ taking the role of $\psi$.
  \end{proof}
    
   
  \begin{proposition}[Weak differentiability of $v$]\label{prop:u_weakly_diff}
    Let $t\in (0,T)$.
    If Assumptions~\ref{ass:sim:smoothness} are satisfied, then $v(t)$ is weakly differentiable in $\Omega$.
  \end{proposition}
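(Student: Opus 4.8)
The plan is to exhibit an explicit candidate for the weak gradient — namely the classical gradient computed separately on each bulk phase — and then verify the defining integration-by-parts identity directly. Fix $t \in (0,T)$, a component index $i$, a derivative direction $j$, and a test function $\varphi \in C_0^\infty(\Omega)$. Since $\Gamma(t)$ is an $\Hm^{n-1}$-null (hence Lebesgue-null) set, I would first split
\begin{equation*}
  \int_\Omega v_i \, \partial_j \varphi \,\mathrm d x
  = \int_{\Omega^-(t)} v_i \, \partial_j \varphi \,\mathrm d x
  + \int_{\Omega^+(t)} v_i \, \partial_j \varphi \,\mathrm d x.
\end{equation*}
By the regularity part of Assumptions~\ref{ass:sim:smoothness}, $v(t)$ agrees on $\overline{\Omega^\pm(t)}$ with a $C^2$-function, so it is $C^1$ up to the interface on each side; combined with the $C^2$-regularity of $\Gamma(t)$, this makes the Gauss--Green theorem applicable on each of $\Omega^-(t)$ and $\Omega^+(t)$.

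Next I would integrate by parts on each subdomain separately. On $\Omega^-(t)$, whose boundary is exactly $\Gamma(t)$ with outward normal $\nu^-$, this produces the bulk term $-\int_{\Omega^-(t)} (\partial_j v_i)\varphi \,\mathrm d x$ together with a boundary contribution $\int_{\Gamma(t)} v_i^- \varphi\, \nu_j^- \,\mathrm d\Hm^{n-1}(x)$, where $v_i^-$ is the trace from the interior of $\Omega^-(t)$. On $\Omega^+(t)$ the portion of the boundary lying on $\partial\Omega$ contributes nothing, because $\varphi$ has compact support in $\Omega$; on the interface portion the outward normal of $\Omega^+(t)$ is $-\nu^-$, so I obtain $-\int_{\Omega^+(t)} (\partial_j v_i)\varphi \,\mathrm d x - \int_{\Gamma(t)} v_i^+ \varphi\, \nu_j^- \,\mathrm d\Hm^{n-1}(x)$.

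Adding the two contributions, the bulk terms merge into $-\int_{\Omega\setminus\Gamma(t)} (\partial_j v_i)\varphi \,\mathrm d x$, while the two interface integrals collapse into $\int_{\Gamma(t)} (v_i^- - v_i^+)\varphi\, \nu_j^- \,\mathrm d\Hm^{n-1}(x)$. The decisive point is that the factor $v_i^- - v_i^+ = -[v_i]$ vanishes identically by the velocity-continuity condition \eqref{eq:sim:velocity_jump}, so the interface term drops out entirely. This yields the weak-derivative identity with the piecewise classical derivative $\partial_j v_i$ — defined on $\Omega\setminus\Gamma(t)$ and extended arbitrarily on the null set $\Gamma(t)$ — as the weak partial derivative of $v_i$. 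Since $v(t)$ is $C^1$ on each of the compact sets $\overline{\Omega^\pm(t)}$, this derivative is bounded and hence lies in $L^1(\Omega)$; as $i$ and $j$ were arbitrary, $v(t)$ is weakly differentiable in $\Omega$.

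I do not expect a genuine obstacle here, as the computation is essentially routine; the only points demanding care are the bookkeeping of $\nu^-$ as the outward normal of $\Omega^-(t)$ but the \emph{inward} normal of $\Omega^+(t)$, and the use of the compact support of $\varphi$ to discard the contribution of $\partial\Omega$. The entire conceptual content sits in the cancellation of the interface terms forced by $[v]=0$: had $v$ jumped across $\Gamma(t)$, the distributional gradient would acquire a singular surface measure and $v(t)$ would fail to be weakly differentiable.
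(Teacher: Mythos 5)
Your argument is correct and coincides with the paper's own proof: both decompose the integral over the two bulk phases, integrate by parts on each using the one-sided $C^1$-regularity up to $\Gamma(t)$, and observe that the resulting interface terms cancel precisely because of the continuity condition $[v]=0$ from \eqref{eq:sim:velocity_jump}. The only cosmetic difference is that the paper tests against $\dive(\psi)$ for vector-valued $\psi$ while you test each partial derivative $\partial_j$ separately with a scalar $\varphi$; the substance is identical.
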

  
  \begin{proof}
    Let $t\in (0,T)$.
    In view of Assumptions~\ref{ass:sim:smoothness}, there holds
    \begin{equation*}
      \left. v(t) \right|_{\Omega^-(t)} \in C^2( \overline{ \Omega^-(t) } )^n 
      \ \text{and} \
      \left. v(t) \right|_{\Omega^+(t)} \in C^2( \overline{ \Omega^+(t) } )^n.
    \end{equation*}
    For any $i=1,\dots,n$ and any $\psi \in C^\infty_0( \Omega )^n$, partial integration yields
    \begin{equation*}
    \begin{split}
      \int_\Omega v_i(t) \dive(\psi) \,\mathrm{d} x
      ={}& \int_{\Omega^-(t)} v_i(t) \dive(\psi) \,\mathrm{d} x + \int_{\Omega^+(t)} v_i(t) \dive(\psi) \,\mathrm{d} x\\
      ={}& - \int_{\Omega \setminus \Gamma(t)} \nabla v_i(t) \cdot \psi \,\mathrm{d} x - \int_{\Gamma(t)} [ v_i(t) ] \psi \cdot \nu^- \,\mathrm{d} \Hm^{n-1}(x).      
    \end{split}
    \end{equation*}
    Since, by \eqref{eq:sim:velocity_jump}, there holds $[ v_i(t) ] = 0$, the claim follows.
  \end{proof}

  The following weak concept of mean curvature will be useful for obtaining a variational formulation of \eqref{eq:sim:pressure_jump}.

  \begin{lemma}[Weak-mean-curvature functional]\label{lem:weak_curvature}
    Let $t\in(0,T)$ and suppose that Assumptions~\ref{ass:sim:smoothness} are satisfied.
    For every $\psi \in C^1(\Omega)^n$ with $\dive( \psi ) = 0$ in $\Omega$, there holds
    \begin{equation}\label{eq:weak_curvature}
      \int_{\Gamma(t)} \kappa(t) \nu^-(t) \cdot \psi \,\mathrm{d}\Hm^{n-1}(x)
      =  \int_{\Gamma(t)} \nu^-(t) \otimes \nu^-(t) : \nabla\psi \,\mathrm{d}\Hm^{n-1}(x).
    \end{equation}
  \end{lemma}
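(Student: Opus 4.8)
The plan is to obtain \eqref{eq:weak_curvature} from the surface integration-by-parts identity \eqref{eq:int_parts_surface} by a componentwise application followed by a trace, with the divergence-free constraint entering only at the very end. Write $\nu = \nu^-(t)$ and $\kappa = \kappa(t)$. Under Assumptions~\ref{ass:sim:smoothness} the set $\Gamma(t) = \partial\Omega^-(t) \subset\subset \Omega$ is a compact $C^2$-hypersurface without boundary, so no boundary terms occur; moreover each component $\psi_j$ of $\psi \in C^1(\Omega)^n$ restricts to a function in $C^1(\Gamma(t))$. Hence \eqref{eq:int_parts_surface}, applied to $f = \psi_j$ with index $i$, yields
\[
  \int_{\Gamma(t)} \delta_i \psi_j \,\mathrm{d}\Hm^{n-1}(x) = -\int_{\Gamma(t)} \psi_j \kappa \nu_i \,\mathrm{d}\Hm^{n-1}(x)
\]
for all $i,j \in \{1,\dots,n\}$.

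Next I would set $i = j$ and sum over $i$. On the right-hand side this produces exactly $-\int_{\Gamma(t)} \kappa (\psi\cdot\nu)\,\mathrm{d}\Hm^{n-1}(x)$, which is, up to sign, the left-hand side of \eqref{eq:weak_curvature}. On the left-hand side the summed integrand is the tangential divergence $\dive_\Gamma(\psi) = \sum_i \delta_i\psi_i$. Using the definition of the tangential gradient, $\delta_i \psi_i = \partial_i\psi_i - (\nabla\psi_i\cdot\nu)\nu_i$, summation gives the pointwise identity $\dive_\Gamma(\psi) = \dive(\psi) - \nu\otimes\nu:\nabla\psi$. This is the one place where the hypothesis $\dive(\psi) = 0$ is used: it collapses the tangential divergence to $-\,\nu\otimes\nu:\nabla\psi$. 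Equating the two sides and cancelling the common sign then gives precisely \eqref{eq:weak_curvature}.

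There is no serious analytic obstacle here; the argument is essentially bookkeeping. The two points requiring a little care are, first, the absence of a boundary contribution in \eqref{eq:int_parts_surface}, which is guaranteed by the closedness of $\Gamma(t)$ (ensured by $\Gamma(t) = \partial\Omega^-(t) \subset\subset \Omega$ being a $C^2$-hypersurface); and, second, the algebraic identification of the trace $\sum_i \delta_i\psi_i$ of the tangential Jacobian with $\dive(\psi) - \nu\otimes\nu:\nabla\psi$, where one must keep the contraction conventions straight so that the divergence-free condition removes exactly the ambient-divergence term and leaves the normal--normal contraction intact.
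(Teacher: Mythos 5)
Your argument is correct and coincides with the paper's own proof: both apply the surface integration-by-parts formula \eqref{eq:int_parts_surface} componentwise to $f=\psi_i$, sum over $i$, and use $\dive_\Gamma(\psi)=\dive(\psi)-\nu^-\otimes\nu^-:\nabla\psi$ together with $\dive(\psi)=0$ to obtain \eqref{eq:weak_curvature}. The extra care you take about the absence of boundary terms and the contraction conventions is implicit in the paper but entirely consistent with it.
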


  \begin{proof}
    Let $t\in(0,T)$ and $\psi \in C^1(\Omega)^n$ with $\dive( \psi ) = 0$ be arbitrary. 
    We apply the integration-by-parts formula \eqref{eq:int_parts_surface} to $f = \psi_i$ and sum over $i=1,\dots,n$.
    Denoting $\kappa = \kappa(t)$, $\nu^- = \nu^-(t)$ and $\Gamma = \Gamma(t)$, as $\psi$ is divergence free, this implies
    \begin{equation*}
      \int_{\Gamma} \kappa \nu^- \cdot \psi \,\mathrm{d}\Hm^{n-1}(x)
      =  - \int_{\Gamma} \dive_{\Gamma} (\psi) \,\mathrm{d}\Hm^{n-1}(x)
      =  \int_{\Gamma} \nu^- \otimes \nu^- : \nabla \psi \,\mathrm{d}\Hm^{n-1}(x).
    \end{equation*}
  \end{proof}
    
Note that the right-hand side of \eqref{eq:weak_curvature} is well-defined if $\Gamma$ is merely the reduced or the essential boundary of a set of finite perimeter.
Then one has to interpret $\nu^-$ as generalised inner (or outer) normal to $\Gamma$.

\begin{lemma}[Weak form of linear-momentum balance]\label{lem:sim:var_form}
  Let Assumptions~\ref{ass:sim:smoothness} hold true.
  For every $\psi \in C_0^\infty( [0,T) ; C_{0,\sigma}^\infty(\Omega) )$, there holds
  \begin{equation}\label{eq:sim:derivation_var_form}
  \begin{split}
    &\int_0^T \!\int_{\Omega} \rho v \cdot \partial_t\psi + \rho v \otimes v:\nabla \psi - 2 \mu( \rho ) D v : D \psi \,\mathrm{d}x \,\mathrm{d}t\\
    ={}& - \int_{\Omega} \rho^{(i)} v^{(i)} \cdot \psi(0)\,\mathrm{d}x - 2 \stc \int_0^T \!\int_{\Gamma(t)} \nu^- \otimes \nu^- : \nabla \psi \,\mathrm{d}\Hm^{n-1}(x) \,\mathrm{d}t.
  \end{split}
  \end{equation}
\end{lemma}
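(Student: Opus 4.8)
The plan is to start from the strong bulk equation \eqref{eq:sim:assos_density_bulk}, test it against $\psi$, integrate over $\Omega\setminus\Gamma(t)$ and over $(0,T)$, and then shift every derivative onto $\psi$ by integrating by parts separately on $\Omega^-(t)$ and $\Omega^+(t)$. By Assumptions~\ref{ass:sim:smoothness} the functions $v$ and $p$ extend to $C^2$, respectively $C^1$, functions up to $\Gamma(t)$ from either side, so all these integrations by parts are classical on each bulk phase, and the whole argument is really a matter of bookkeeping the boundary contributions on $\Gamma(t)$. Thus I start from
\begin{equation*}
  0 = \int_0^T\!\int_{\Omega\setminus\Gamma(t)}\big(\rho\partial_t v + \rho(v\cdot\nabla)v - 2\mu(\rho)\dive(Dv) + \nabla p\big)\cdot\psi\,\mathrm dx\,\mathrm dt
\end{equation*}
and treat the four terms in turn, collecting the interface integrals at the end.

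For the time-derivative term I would apply Lemma~\ref{lem:transport_identities}(i). Expanding $\partial_t(v\cdot\psi)=\partial_t v\cdot\psi + v\cdot\partial_t\psi$ and integrating the identity for $\tfrac{\mathrm d}{\mathrm d t}\int_\Omega\rho v\cdot\psi$ over $(0,T)$, the endpoint at $t=T$ drops because $\psi$ is supported in $[0,T)$, while the endpoint at $t=0$ yields $-\int_\Omega\rho^{(i)}v^{(i)}\cdot\psi(0)$; what remains is $-\int_0^T\!\int_\Omega\rho v\cdot\partial_t\psi$ together with an interface term $-(\beta_1-\beta_2)\int_0^T\!\int_{\Gamma(t)}V(v\cdot\psi)$. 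For the convective term I integrate by parts in each phase; since $\dive(v)=0$ by \eqref{eq:sim:divergence_free} and $\dive(\psi)=0$, the bulk part becomes $-\rho(v\otimes v):\nabla\psi$, the contributions on $\partial\Omega$ vanish because $\psi$ is compactly supported, and on $\Gamma(t)$, where the outer normal of $\Omega^+(t)$ is $-\nu^-$, continuity of the velocity \eqref{eq:sim:velocity_jump} collects the boundary integrals into $+(\beta_1-\beta_2)\int_0^T\!\int_{\Gamma(t)}(v\cdot\nu^-)(v\cdot\psi)$. Invoking the kinematic condition $V=v\cdot\nu^-$ from \eqref{eq:sim:v_normal_velocity}, this exactly cancels the interface term produced by the time derivative.

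For the viscous and pressure terms I again integrate by parts phase by phase. Using the symmetry of $Dv$, so that $Dv:\nabla\psi=Dv:D\psi$, the viscous term contributes the bulk integral $\int_0^T\!\int_\Omega 2\mu(\rho)Dv:D\psi$ and an interface integral built from $\mu(\beta_1)Dv^-$ and $\mu(\beta_2)Dv^+$; the pressure term, thanks to $\dive(\psi)=0$, contributes no bulk integral and only the interface integral $-\int_0^T\!\int_{\Gamma(t)}[p]\,\nu^-\cdot\psi$. Combining these two interface integrals, and respecting the orientation of the outer normal on $\partial\Omega^+(t)$, the integrand reduces to $\big([2\mu Dv]\nu^- - [p]\nu^-\big)\cdot\psi=[T]\nu^-\cdot\psi$, which by the Young--Laplace condition \eqref{eq:sim:pressure_jump} equals $-2\stc\kappa\,\nu^-\cdot\psi$. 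Finally, since $\psi(\cdot,t)$ is divergence-free, Lemma~\ref{lem:weak_curvature} rewrites $\int_{\Gamma(t)}\kappa\,\nu^-\cdot\psi$ as $\int_{\Gamma(t)}\nu^-\otimes\nu^-:\nabla\psi$, turning the remaining interface term into $-2\stc\int_0^T\!\int_{\Gamma(t)}\nu^-\otimes\nu^-:\nabla\psi$. Adding the four contributions and rearranging then gives exactly \eqref{eq:sim:derivation_var_form}.

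I expect the sign bookkeeping on $\Gamma(t)$ to be the main obstacle: one has to use the opposite orientation of the outer normal on $\Omega^+(t)$ consistently, exploit continuity of $v$ to merge the two convective boundary integrals while keeping $Dv^\pm$ distinct (the symmetric gradient genuinely jumps), and match the combined viscous-plus-pressure trace precisely to the stress jump $[T]\nu^-$ so that the surface-tension term appears with the correct constant. The two cancellations — the kinematic condition killing the transport and convection interface terms, and the Young--Laplace law converting the stress trace into curvature — are where all the structure of the problem enters.
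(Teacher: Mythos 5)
Your proposal is correct and follows essentially the same route as the paper: test the bulk equation \eqref{eq:sim:assos_density_bulk} with $\psi$, handle the time derivative via Lemma~\ref{lem:transport_identities}, integrate by parts phase by phase for the convective, viscous and pressure terms, cancel the transport and convection interface contributions with the kinematic condition \eqref{eq:sim:v_normal_velocity}, convert the stress jump via the Young--Laplace law \eqref{eq:sim:pressure_jump}, and finish with Lemma~\ref{lem:weak_curvature}. All signs and boundary bookkeeping match the paper's argument.
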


\begin{proof}
  Multiplying \eqref{eq:sim:assos_density_bulk} by $\psi \in C_0^\infty( [0,T) ; C_{0,\sigma}^\infty( \Omega ) )$ and integrating with respect to space and time leads to
  \begin{equation}\label{eq:bulk_mult_psi}
    \int_0^T \!\int_{\Omega \setminus \Gamma(t)} \big( \rho \partial_t v + \rho (v \cdot \nabla) v - 2 \mu( \rho ) \dive( D v ) + \nabla p  \big) \cdot \psi  \,\mathrm{d}x \,\mathrm{d}t = 0.
  \end{equation}
  Applying the first statement of Lemma~\ref{lem:transport_identities} to deal with the time derivative leads to
  \begin{equation}
  \begin{split}
    &\int_0^T \!\int_{\Omega \setminus \Gamma(t)} \rho \partial_t v \cdot \psi \,\mathrm{d}x \,\mathrm{d}t + (\beta_1 - \beta_2) \int_0^T \!\int_{\Gamma(t)}  V ( v \cdot \psi ) \,\mathrm{d}\Hm^{n-1}(x) \,\mathrm{d}t\\
    ={}& - \int_0^T \!\int_{\Omega} \rho v \cdot \partial_t \psi \,\mathrm{d}x \,\mathrm{d}t + \int_0^T \,\left( \frac{\mathrm d}{\mathrm d t} \int_{\Omega} \rho v \cdot \psi \,\mathrm{d} x \right) \,\mathrm{d}t\\
    ={}& - \int_0^T \!\int_{\Omega} \rho v \cdot \partial_t \psi \,\mathrm{d}x \,\mathrm{d}t  - \int_{\Omega} \rho^{(i)} v^{(i)} \cdot \psi(0)\,\mathrm{d}x.
  \end{split}
  \end{equation}
  To each of the remaining terms in~\eqref{eq:bulk_mult_psi}, we shall apply the integration-by-parts formula on the spatial domains $\Omega^-(t)$ and $\Omega^+(t)$:
  By \eqref{eq:sim:divergence_free} and \eqref{eq:sim:velocity_jump}, we infer
  \begin{equation}
  \begin{split}
     &\int_{\Omega \setminus \Gamma(t)} \rho ( (v \cdot \nabla) v ) \cdot \psi \,\mathrm{d}x\\
   ={}&\beta_1 \int_{\Omega^-(t)} \dive( v \otimes v ) \cdot \psi \,\mathrm{d}x + \beta_2 \int_{\Omega^+(t)} \dive( v \otimes v ) \cdot \psi \,\mathrm{d}x\\
    ={}&- \int_{\Omega}  \rho v \otimes v : \nabla \psi \,\mathrm{d}x + ( \beta_1 -\beta_2 ) \int_{\Gamma(t)} (v \cdot \nu^-) v \cdot \psi \,\mathrm{d}\Hm^{n-1}(x).
  \end{split}
  \end{equation}
  Using Proposition~\ref{prop:u_weakly_diff} and $Dv : \nabla \psi = Dv : D\psi$, we analogously obtain that
  \begin{equation}\label{eq:bulk_mult_psi_term3}
  \begin{split}
    &\int_{\Omega \setminus \Gamma(t)} \mu( \rho ) \dive ( D v ) \cdot \psi \,\mathrm{d}x\\
    ={}&-  \int_{\Omega}\mu( \rho ) D v : D \psi \,\mathrm{d}x - \int_{\Gamma(t)} [ \mu( \rho )  Dv ]  \nu^- \cdot \psi \,\mathrm{d}\Hm^{n-1}(x).
  \end{split}
  \end{equation}
  In view of $\dive( \psi ) = 0$, we have 
  \begin{equation}\label{eq:bulk_mult_psi_term4}
  \begin{split}
    \int_{\Omega \setminus \Gamma(t)} \nabla p \cdot \psi \,\mathrm{d}x
    ={}& - \int_{\Gamma(t)} [ p ] \nu^- \cdot \psi \,\mathrm{d}\Hm^{n-1}(x).
  \end{split}
  \end{equation}
  Now combining \eqref{eq:bulk_mult_psi}--\eqref{eq:bulk_mult_psi_term4} leads to 
  \begin{equation*}
  \begin{split}
    &\int_0^T \!\int_{\Omega} \rho v \cdot \partial_t\psi + \rho v \otimes v:\nabla \psi - 2 \mu( \rho ) D v : D \psi \,\mathrm{d}x \,\mathrm{d}t
+ \int_{\Omega} \rho^{(i)} v^{(i)} \cdot \psi(0)\,\mathrm{d}x\\
    ={}& - (\beta_1 - \beta_2) \int_0^T \!\int_{\Gamma(t)}  (V - v\cdot \nu^-) v \cdot \psi \,\mathrm{d}\Hm^{n-1}(x) \,\mathrm{d}t\\
     & + \int_0^T \!\int_{\Gamma(t)} \left[2 \mu ( \rho ) D v \nu^- - p \nu^-\right]  \cdot \psi \,\mathrm{d}\Hm^{n-1}(x) \,\mathrm{d}t\\
    ={}&- 2 \stc \int_0^T \!\int_{\Gamma(t)} \kappa \nu^- \cdot \psi \,\mathrm{d}\Hm^{n-1}(x) \,\mathrm{d}t,
  \end{split}
  \end{equation*}
  where the last identity follows by \eqref{eq:sim:v_normal_velocity} and \eqref{eq:sim:pressure_jump}.
  Finally, Lemma~\ref{lem:weak_curvature} yields \eqref{eq:sim:derivation_var_form}.
  \end{proof}

\subsection{Energy Equality}\label{ssec:sim:energy_equality}

In an analogous manner to Lemma~\ref{lem:sim:var_form}, we may derive the following energy identity. 

\begin{lemma}[Energy equality and a priori bounds]\label{lem:sim:energy_eq}
  Let Assumptions~\ref{ass:sim:smoothness} hold true.
  For all $\tau_1 ,\tau_2 \in [ 0,T ]$ such that $\tau_1 \leq \tau_2$, the following energy equality is satisfied.
  \begin{equation}\label{eq:sim:energy_equality}
  \begin{split}
     &2 \stc \Hm^{n-1}{(\Gamma(\tau_2))}
     +\tfrac{1}{2} \int_{\Omega} \rho(\tau_2) \left| v(\tau_2) \right|^2 \,\mathrm{d}x
     +2 \int_{\tau_1}^{\tau_2} \!\int_{\Omega} \mu( \rho ) \left|D v \right|^2 \,\mathrm{d}x \,\mathrm{d}t\\
    ={}&2 \stc \Hm^{n-1}{(\Gamma(\tau_1))} + \tfrac{1}{2} \int_{\Omega} \rho(\tau_1) \left| v(\tau_1) \right|^2 \,\mathrm{d}x.
  \end{split}
  \end{equation}
  Moreover, if the initial energy
  \begin{equation}\label{eq:def_initial_energy}
    E^{(i)}
    =2 \stc \Hm^{n-1}(\Gamma^{(i)}) + \tfrac{1}{2} \int_{\Omega}\rho^{(i)} \left|v^{(i)} \right|^2 \,\mathrm{d}x
  \end{equation}
  is finite, then there holds
  \begin{equation*}
    v \in L^\infty(0,T; L^2_{\sigma}(\Omega)) \cap L^2(0,T; H^1_0(\Omega)^n)
    \ \text{and} \
    \rho \in L^\infty(0,T; BV(\Omega, \{ \beta_1, \beta_2\} ) ).
  \end{equation*}
\end{lemma}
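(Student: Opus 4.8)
**

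The plan is to derive the energy equality \eqref{eq:sim:energy_equality} by testing the momentum equation against the velocity itself, exploiting the same transport-theorem machinery already assembled in Lemma~\ref{lem:transport_identities} and Lemma~\ref{lem:weak_curvature}, and then to read off the a priori bounds from this identity together with the finiteness of the initial energy. Since $v$ is not an admissible test function in \eqref{eq:sim:derivation_var_form} (it is neither compactly supported in time nor smooth across $\Gamma(t)$), I would work directly on the bulk phases rather than invoke the weak formulation. Concretely, I first multiply the strong equation \eqref{eq:sim:assos_density_bulk} by $v$ and integrate over $\Omega\setminus\Gamma(t)$ and over the time slab $[\tau_1,\tau_2]$.

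\textbf{Handling the individual terms.} For the time-derivative term I would apply the second statement of Lemma~\ref{lem:transport_identities}, writing
\begin{equation*}
  \int_{\Omega\setminus\Gamma(t)} \rho\,\partial_t v\cdot v\,\mathrm{d}x
  = \tfrac12\int_{\Omega\setminus\Gamma(t)}\rho\,\partial_t|v|^2\,\mathrm{d}x
  = \tfrac12\frac{\mathrm d}{\mathrm d t}\int_{\Omega}\rho|v|^2\,\mathrm{d}x
    - \tfrac{\beta_1-\beta_2}{2}\int_{\Gamma(t)}V|v|^2\,\mathrm{d}\Hm^{n-1}(x),
\end{equation*}
which upon integration in time produces the two kinetic-energy terms in \eqref{eq:sim:energy_equality}. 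For the convective term, integration by parts on each phase (using $\dive(v)=0$ and $[v]=0$, exactly as in the proof of Lemma~\ref{lem:sim:var_form}) gives $\int_{\Omega\setminus\Gamma(t)}\rho(v\cdot\nabla)v\cdot v\,\mathrm{d}x = \tfrac{\beta_1-\beta_2}{2}\int_{\Gamma(t)}(v\cdot\nu^-)|v|^2\,\mathrm{d}\Hm^{n-1}(x)$, since the bulk contribution $\int\rho\,v\otimes v:\nabla v$ vanishes by incompressibility. The viscous term, via Proposition~\ref{prop:u_weakly_diff} and partial integration, yields the dissipation $2\int\mu(\rho)|Dv|^2\,\mathrm{d}x$ plus the interface term $-\int_{\Gamma(t)}[2\mu(\rho)Dv]\nu^-\cdot v\,\mathrm{d}\Hm^{n-1}(x)$, and the pressure term contributes $-\int_{\Gamma(t)}[p]\nu^-\cdot v\,\mathrm{d}\Hm^{n-1}(x)$.

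\textbf{Collecting interface terms.} The two convective and kinetic interface contributions combine into $(\beta_1-\beta_2)\int_{\Gamma(t)}(v\cdot\nu^- - V)|v|^2\,\mathrm{d}\Hm^{n-1}$, which vanishes by the kinematic condition \eqref{eq:sim:v_normal_velocity}. The viscous and pressure interface terms assemble into $\int_{\Gamma(t)}[T]\nu^-\cdot v\,\mathrm{d}\Hm^{n-1}$, and the Young--Laplace law \eqref{eq:sim:pressure_jump} turns this into $-2\stc\int_{\Gamma(t)}\kappa\,\nu^-\cdot v\,\mathrm{d}\Hm^{n-1}$. Here is the step I expect to be the main obstacle: I must recognise this curvature term as the time derivative of the surface measure. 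Since $v(\cdot,t)$ is divergence-free and continuous up to $\Gamma(t)$, Lemma~\ref{lem:weak_curvature} applies (with $\psi=v(\cdot,t)$) to rewrite it via $\nu^-\otimes\nu^-:\nabla v$, and the kinematic condition $V=v\cdot\nu^-$ together with the transport theorem for $\Hm^{n-1}(\Gamma(t))$ (the ``In particular'' part of Theorem~\ref{thm:transport_theorem}, namely $\frac{\mathrm d}{\mathrm d t}\Hm^{n-1}(\Gamma(t)) = -\int_{\Gamma(t)}\kappa V\,\mathrm{d}\Hm^{n-1}$) identifies $-2\stc\int_{\Gamma(t)}\kappa V\,\mathrm{d}\Hm^{n-1} = 2\stc\frac{\mathrm d}{\mathrm d t}\Hm^{n-1}(\Gamma(t))$. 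Integrating in time delivers the surface-energy terms $2\stc\Hm^{n-1}(\Gamma(\tau_2)) - 2\stc\Hm^{n-1}(\Gamma(\tau_1))$, completing \eqref{eq:sim:energy_equality}. Care is needed to match the normal-velocity conventions and the sign of $\kappa$ relative to $\nu^-$, so I would double-check the orientation throughout.

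\textbf{A priori bounds.} Once \eqref{eq:sim:energy_equality} holds, setting $\tau_1=0$ and taking $\tau_2\in[0,T]$ arbitrary, every term on the left is nonnegative and bounded by $E^{(i)}$. Hence $\esssup_{t\in[0,T]}\int_\Omega\rho(t)|v(t)|^2\,\mathrm{d}x \leq 2E^{(i)}$ and $\int_0^T\!\int_\Omega\mu(\rho)|Dv|^2\,\mathrm{d}x\,\mathrm{d}t\leq E^{(i)}$. Since $\rho\geq\beta_1>0$ and $\mu(\rho)\geq\min_i\mu(\beta_i)>0$, Korn's inequality on $H^1_0$ upgrades the $Dv$-dissipation bound to control of $\|v\|_{L^2(0,T;H^1_0(\Omega)^n)}$, and incompressibility together with $v\in H^1_0$ places $v(t)$ in $L^2_\sigma(\Omega)$ for a.e.\ $t$; this gives $v\in L^\infty(0,T;L^2_\sigma(\Omega))\cap L^2(0,T;H^1_0(\Omega)^n)$. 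Finally, the surface-energy bound $\esssup_t\Hm^{n-1}(\Gamma(t))\leq E^{(i)}/(2\stc)$ translates, via the identity $|\nabla\chi(t)|(\Omega)=\Hm^{n-1}(\Gamma(t))$ from the structure theorem and the affine relation \eqref{eq:def_chi} between $\rho$ and $\chi$, into a uniform bound on $\|\rho(t)\|_{BV(\Omega)}$, yielding $\rho\in L^\infty(0,T;BV(\Omega,\{\beta_1,\beta_2\}))$.
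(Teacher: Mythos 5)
Your proposal is correct and follows essentially the same route as the paper: test the bulk equation \eqref{eq:sim:assos_density_bulk} with $v$, handle the time derivative via the second transport identity of Lemma~\ref{lem:transport_identities}, integrate the convective, viscous and pressure terms by parts on each phase, cancel the interface contributions using \eqref{eq:sim:v_normal_velocity} and \eqref{eq:sim:pressure_jump}, identify $\int_{\Gamma(t)}\kappa V\,\mathrm{d}\Hm^{n-1}$ with $-\tfrac{\mathrm d}{\mathrm d t}\Hm^{n-1}(\Gamma(t))$ via Theorem~\ref{thm:transport_theorem}, and then read off the bounds (Korn for $H^1_0$, total variation of $\rho(t)$ equal to $(\beta_2-\beta_1)\Hm^{n-1}(\Gamma(t))$). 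The only cosmetic difference is your detour through Lemma~\ref{lem:weak_curvature} with $\psi=v(\cdot,t)$, which is superfluous (and whose hypothesis $\psi\in C^1(\Omega)^n$ is not met by $v$ across the interface); the identification $\kappa\,\nu^-\cdot v=\kappa V$ follows directly from \eqref{eq:sim:v_normal_velocity}, which is what both you and the paper actually use.
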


\begin{proof}
  Let $\tau_1 ,\tau_2 \in [ 0,T ]$ be such that $\tau_1 \leq \tau_2$.
  We multiply \eqref{eq:sim:assos_density_bulk} by $v$ and integrate with respect to space and time.
  This leads to
  \begin{equation}\label{eq:bulk_mult_v}
    \int_{\tau_1}^{\tau_2} \!\int_{\Omega \setminus \Gamma(t)} \big( \rho \partial_t v + \rho (v \cdot \nabla) v - 2 \mu( \rho ) \dive ( D v ) + \nabla p  \big) \cdot v  \,\mathrm{d}x \,\mathrm{d}t = 0.
  \end{equation}
  We shall evaluate the integral expression successively.
  For the treatment of the time derivative, we apply Lemma~\ref{lem:transport_identities} to obtain
  \begin{equation}\label{eq:bulk_mult_v_term1}
  \begin{split}
    2 \int_{\tau_1}^{\tau_2} \!\int_{\Omega \setminus \Gamma(t)} \rho \partial_t v \cdot v \,\mathrm{d}x \,\mathrm{d}t
    ={}& \int_{\tau_1}^{\tau_2} \!\int_{\Omega \setminus \Gamma(t)} \rho \partial_t \left| v \right|^2 \,\mathrm{d}x \,\mathrm{d}t\\
    ={}& \int_{\Omega} \rho(\tau_2) \left| v(\tau_2) \right|^2 \,\mathrm{d}x -  \int_{\Omega} \rho(\tau_1) \left| v(\tau_1) \right|^2 \,\mathrm{d}x\\
    &-  (\beta_1 - \beta_2) \int_{\tau_1}^{\tau_2} \!\int_{\Gamma(t)}  V  \left| v \right|^2  \,\mathrm{d}\Hm^{n-1}(x) \,\mathrm{d}t.
  \end{split}
  \end{equation} 
  For the treatment of the remaining terms in~\eqref{eq:bulk_mult_v}, we shall repeatedly integrate by parts with respect to the spatial variable for fixed $t\in ( \tau_1,\tau_2 )$:
  for the computation of the second term in~\eqref{eq:bulk_mult_v}, we use that,
in view of~\eqref{eq:sim:divergence_free}, there holds $(( v \cdot \nabla) v) \cdot v = \dive( v \otimes v ) \cdot v = \tfrac{1}{2} \dive( \left| v\right|^2 v )$ in $\Omega \setminus \Gamma(t)$, which implies
  \begin{equation}\label{eq:bulk_mult_v_term2}
  \begin{split}
    2 \int_{\Omega \setminus \Gamma(t)} \rho ( (v \cdot \nabla) v ) \cdot v \,\mathrm{d}x
    ={}& ( \beta_1 - \beta_2 ) \int_{\Gamma(t)} \left| v\right|^2 v \cdot \nu^- \,\mathrm{d}\Hm^{n-1}(x).
  \end{split}
  \end{equation}
  Proceeding as in \eqref{eq:bulk_mult_psi_term3} and using $Dv : \nabla v = \left| Dv \right|^2$ leads to
  \begin{equation}\label{eq:bulk_mult_v_term3}
  \begin{split}
    &\int_{\Omega \setminus \Gamma(t)} \mu( \rho ) \dive ( D v ) \cdot v \,\mathrm{d}x\\
    ={}&-  \int_{\Omega} \mu( \rho )  \left| D v \right|^2 \,\mathrm{d}x - \int_{\Gamma(t)} [ \mu( \rho )  Dv ]  \nu^- \cdot v \,\mathrm{d}\Hm^{n-1}(x).
  \end{split}
  \end{equation}
  To treat the pressure term in \eqref{eq:bulk_mult_v}, we may again use
\eqref{eq:sim:divergence_free}.
  Using calculations as in \eqref{eq:bulk_mult_psi_term4}, we infer that
  \begin{equation}\label{eq:bulk_mult_v_term4}
    \int_{\Omega \setminus \Gamma(t)} \nabla p \cdot v \,\mathrm{d}x
    = - \int_{\Gamma(t)} [ p ]  \nu^- \cdot v \,\mathrm{d}\Hm^{n-1}(x).
  \end{equation}
  Now we may combine \eqref{eq:bulk_mult_v}--\eqref{eq:bulk_mult_v_term4}.
  Altogether, by \eqref{eq:sim:v_normal_velocity} and \eqref{eq:sim:pressure_jump}, we obtain
  \begin{equation*}
  \begin{split}
    &\frac{1}{2}\int_{\Omega} \rho(\tau_2) \left| v(\tau_2) \right|^2 \,\mathrm{d}x -\frac{1}{2} \int_{\Omega} \rho(\tau_1) \left| v(\tau_1) \right|^2 \,\mathrm{d}x
    + 2\int_{\tau_1}^{\tau_2} \!\int_{\Omega} \mu( \rho ) \left|D v \right|^2 \,\mathrm{d}x \,\mathrm{d}t\\
    ={}&\frac{1}{2}( \beta_1 - \beta_2 ) \int_{\tau_1}^{\tau_2} \!\int_{\Gamma(t)} \left| v\right|^2 (V - v \cdot \nu^-) \,\mathrm{d}\Hm^{n-1}(x) \,\mathrm{d}t\\
     & - \int_{\tau_1}^{\tau_2} \!\int_{\Gamma(t)} [ 2 \mu( \rho ) Dv \nu^- - p  \nu^- ] \cdot v \,\mathrm{d}\Hm^{n-1}(x) \,\mathrm{d}t\\
    ={}& 2 \stc \int_{\tau_1}^{\tau_2} \!\int_{\Gamma(t)} \kappa v\cdot \nu^- \,\mathrm{d}\Hm^{n-1}(x) \,\mathrm{d}t\\
    ={}& 2 \stc \int_{\tau_1}^{\tau_2} \!\int_{\Gamma(t)} \kappa V \,\mathrm{d}\Hm^{n-1}(x) \,\mathrm{d}t.
   \end{split}
   \end{equation*}
Now \eqref{eq:sim:energy_equality} follows by observing that, in view of Theorem~\ref{thm:transport_theorem}, there holds
 \begin{equation*}
   \begin{split}
     \int_{\tau_1}^{\tau_2} \!\int_{\Gamma(t)} \kappa V \,\mathrm{d}\Hm^{n-1}(x) \,\mathrm{d}t
     ={}& - \int_{\tau_1}^{\tau_2} \tfrac{\mathrm d}{\mathrm d t} \Hm^{n-1}( \Gamma( t ) ) \,\mathrm{d}t\\
     ={}& \Hm^{n-1}( \Gamma(\tau_1) ) - \Hm^{n-1}( \Gamma(\tau_2) ).
   \end{split}
 \end{equation*}
  \par
  Suppose that the initial energy $E^{(i)}$, defined by \eqref{eq:def_initial_energy}, is finite and let $t\in[0,T]$.
  Recall from \eqref{eq:def_rho} that there holds $\rho(t) \in \{ \beta_1, \beta_2\}$ a.e.\ in $\Omega$.
  Making in \eqref{eq:sim:energy_equality} the choice $\tau_1 = 0$ and $\tau_2 = t$ then implies
  \begin{equation*}
    \tfrac{1}{2} \beta_1 \int_{\Omega} \left| v(t) \right|^2 \,\mathrm{d}x
    \leq E^{(i)}.
  \end{equation*}
  Hence $v \in L^\infty(0,T ; L^2(\Omega)^n )$.
  Similarly, we obtain that
  \begin{equation*}
    \min\{ \mu(\beta_1) , \mu(\beta_2) \} \int_0^T \!\int_{\Omega} \left|D v \right|^2 \,\mathrm{d}x \,\mathrm{d}t
    \leq \int_0^T \!\int_{\Omega} \mu( \rho ) \left| D v \right|^2 \,\mathrm{d}x \,\mathrm{d}t
    \leq \tfrac{1}{2} E^{(i)}.
  \end{equation*}
  Due to the boundary condition \eqref{eq:sim:bc}, and using Korn's inequality \cite[Theorem~1.33]{roubicek}, we infer that $v \in L^2(0,T; H^1_0(\Omega)^n)$.
  \par
  In the remainder of the proof we fix $t\in(0,T)$.
  In view of \eqref{eq:sim:divergence_free} and Lemma~\ref{lem:sigma_characterisations}, $v$ belongs to $L^\infty( 0,T ; L^2_{\sigma}(\Omega) )$.
  To explore the regularity of $\rho$, we recall that, in view of \eqref{eq:def_rho}, for every $t\in (0,T)$, there holds $\rho(t) \in \{ \beta_1, \beta_2\}$ a.e.\ in $\Omega$ and, in particular, $\rho$ belongs to $L^\infty( 0,T ; L^\infty(\Omega) )$.
  Additionally, for any $\psi \in C^\infty_0(\Omega)^n$, we have
  \begin{equation*}
  \begin{split}
    \langle  \nabla \rho(t) , \psi \rangle_{\mathcal{D}(\Omega)^n}
    &= - \int_{\Omega}  \rho(t) \dive ( \psi ) \,\mathrm{d} x\\
    &= - ( \beta_1 - \beta_2 ) \int_{\Omega^-(t)} \dive ( \psi ) \,\mathrm{d} x\\
    &= - ( \beta_1 - \beta_2 ) \int_{\Gamma(t)} \psi \cdot \nu^-(t) \,\mathrm{d} \Hm^{n-1}(x).
  \end{split}
  \end{equation*}

  Consequently, $\nabla \rho(t)$ is a finite Radon measure and there holds
  \begin{equation*}
  \begin{split}
    \| \nabla \rho(t) \|_{\mathcal{M}(\Omega)}
    &= \sup \left\{ \int_{\Omega} \rho(t) \dive (\psi) \,\mathrm{d} x \,:\, \psi\in C_0^1(\Omega)^n, \ \| \psi \|_{\infty} \leq 1 \right\} \\
    &= ( \beta_2 - \beta_1 )\sup \left\{ \int_{\Omega^-(t)} \dive (\psi) \,\mathrm{d} x \,:\, \psi\in C_0^1(\Omega)^n, \ \| \psi \|_{\infty} \leq 1 \right\} \\
    &= ( \beta_2 - \beta_1 ) \Hm^{n-1}( \partial \Omega^-(t) \cap \Omega ).
  \end{split}
  \end{equation*}
  Due to Assumptions~\ref{ass:sim:smoothness}, $\Omega^-(t)$ has a Lipschitz boundary and $\Omega^-(t) \subset\subset \Omega$.
  Then,
we get
  \begin{equation}\label{eq:variation_rho}
    \| \nabla \rho(t) \|_{\mathcal{M}(\Omega)}
    = ( \beta_2 - \beta_1 )\Hm^{n-1}( \Gamma(t) ).
  \end{equation}
  Finally, from the energy equality \eqref{eq:sim:energy_equality}, it follows that $\| \nabla \rho(t) \|_{\mathcal{M}(\Omega)}$ is uniformly bounded in $t$.
  Altogether, we have proven that $\rho \in L^\infty(0,T; BV(\Omega, \{ \beta_1, \beta_2\} ) )$.
\end{proof}

\subsection{Transport Equation} \label{ssec:sim:transport_eq}

The interface condition \eqref{eq:sim:v_normal_velocity} can be expressed by the following transport equation for $\chi$ in distributional form, cf.\ \cite[Section~2.5]{abels}.

\begin{lemma}[Transport equation]\label{lem:sim:transport_eq}
  Let Assumptions~\ref{ass:sim:smoothness} hold true.
  Then, for all $\varphi \in C_{(0)}^\infty(\overline{\Omega} \times [0,T))$, there holds
  \begin{equation}\label{eq:transport_var}
    \int_0^T \!\int_{\Omega} \chi ( \partial_t \varphi + v \cdot \nabla \varphi) \,\mathrm{d} x \,\mathrm{d} t + \int_{\Omega} \chi^{(i)}(x)\varphi(0) \,\mathrm{d} x = 0.
  \end{equation}
\end{lemma}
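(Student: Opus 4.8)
The plan is to use that $\chi(t)=\chi_{\Omega^-(t)}$, so that $\int_\Omega \chi(t)\,f\,\mathrm{d}x = \int_{\Omega^-(t)} f\,\mathrm{d}x$ for any integrable $f$, and then to split the integrand $\partial_t\varphi + v\cdot\nabla\varphi$ into its two pieces, treating the time derivative by the transport theorem (Theorem~\ref{thm:transport_theorem}) and the convective part by the classical divergence theorem on the moving domain $\Omega^-(t)$. The kinematic interface condition~\eqref{eq:sim:v_normal_velocity} will then produce an exact cancellation of the two resulting surface integrals over $\Gamma(t)$, which is precisely how the transport of the interface is encoded at the distributional level.

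First I would address the time-derivative term. By Assumptions~\ref{ass:sim:smoothness}, $(\Gamma(t))_{t\in[0,T]}$ is a family of evolving hypersurfaces with $\Gamma(t)=\partial\Omega^-(t)$, and any $\varphi\in C^\infty_{(0)}(\overline\Omega\times[0,T))$ is the restriction of some $U\in C^\infty_0(\R^{n+1})$. Hence Theorem~\ref{thm:transport_theorem}(1), applied with $\Omega(t)=\Omega^-(t)$ and normal velocity $V$, gives
\begin{equation*}
  \int_{\Omega^-(t)} \partial_t\varphi\,\mathrm{d}x
  = \frac{\mathrm d}{\mathrm d t}\int_{\Omega^-(t)}\varphi\,\mathrm{d}x
  - \int_{\Gamma(t)} \varphi\,V\,\mathrm{d}\Hm^{n-1}(x).
\end{equation*}
Integrating over $t\in(0,T)$ and applying the fundamental theorem of calculus, the total-derivative term collapses to the values at the endpoints; since $\mathrm{supp}(\varphi)\subset\overline\Omega\times[0,T)$ forces $\varphi$ to vanish near $t=T$, only the contribution at $t=0$ survives, and it equals $-\int_{\Omega^{-,(i)}}\varphi(0)\,\mathrm{d}x = -\int_\Omega \chi^{(i)}\varphi(0)\,\mathrm{d}x$.

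Next I would treat the convective term. For fixed $t$, incompressibility~\eqref{eq:sim:divergence_free} yields $v\cdot\nabla\varphi = \dive(\varphi v)$ on $\Omega^-(t)$; since $v(t)|_{\Omega^-(t)}\in C^2(\overline{\Omega^-(t)})^n$ and $\Omega^-(t)\subset\subset\Omega$ has boundary $\Gamma(t)$, the divergence theorem gives
\begin{equation*}
  \int_{\Omega^-(t)} v\cdot\nabla\varphi\,\mathrm{d}x
  = \int_{\Gamma(t)} \varphi\,(v\cdot\nu^-)\,\mathrm{d}\Hm^{n-1}(x).
\end{equation*}
Summing the two contributions and invoking $V = v\cdot\nu^-$ from~\eqref{eq:sim:v_normal_velocity}, the surface integrals $-\int_{\Gamma(t)}\varphi V$ and $\int_{\Gamma(t)}\varphi\,(v\cdot\nu^-)$ cancel after integration in $t$, leaving exactly $\int_0^T\!\int_\Omega \chi(\partial_t\varphi + v\cdot\nabla\varphi)\,\mathrm{d}x\,\mathrm{d}t = -\int_\Omega \chi^{(i)}\varphi(0)\,\mathrm{d}x$, which is~\eqref{eq:transport_var} after rearrangement.

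I do not anticipate a genuine technical obstacle; the argument is essentially careful bookkeeping. The one point requiring attention is the vanishing of the $t=T$ boundary term, which relies on the half-open support condition built into the space $C^\infty_{(0)}(\overline\Omega\times[0,T))$. The conceptual heart of the proof is simply the observation that~\eqref{eq:sim:v_normal_velocity} is exactly the identity needed to cancel the two $\Gamma(t)$-integrals, so that the purely geometric transport of the interface becomes the distributional transport equation for $\chi$.
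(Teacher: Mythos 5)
Your proposal is correct and follows essentially the same route as the paper: apply Theorem~\ref{thm:transport_theorem} to $\varphi$ on $\Omega^-(t)$ and integrate in time (the paper's identity \eqref{eq:interface_velocity_weak}), convert the convective term to a boundary integral via $\dive(v)=0$ and the divergence theorem, and cancel the two surface integrals using $V=v\cdot\nu^-$ from \eqref{eq:sim:v_normal_velocity}. No gaps; the bookkeeping, including the vanishing of the $t=T$ endpoint term, matches the paper's argument.
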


\begin{proof}
  Let $\varphi \in C_{(0)}^\infty(\overline{\Omega} \times [0,T))$.
  Applying Theorem~\ref{thm:transport_theorem} to $\varphi$ and integrating with respect to time yields
  \begin{equation}\label{eq:interface_velocity_weak}
    \int_0^T \!\int_{\Gamma(t)} V \varphi \,\mathrm{d} \Hm^{n-1}(x) \,\mathrm{d} t
    = - \int_0^T \!\int_{\Omega^-(t)} \partial_t \varphi \,\mathrm{d} x \,\mathrm{d} t - \int_{\Omega^-(0)} \varphi(0) \,\mathrm{d} x.
  \end{equation}
  
  Since $\dive(v(t)) = 0$ in $\Omega^-(t)$ by \eqref{eq:sim:divergence_free}, we conclude that
  \begin{equation*}
  \begin{split}
    \int_0^T \!\int_{\Gamma(t)} v\cdot \nu^- \varphi \,\mathrm{d} \Hm^{n-1}(x) \,\mathrm{d} t
    &= \int_0^T \!\int_{\Omega^-(t)} \dive(v \varphi) \,\mathrm{d} x \,\mathrm{d} t\\
    &= \int_0^T \!\int_{\Omega^-(t)} v \cdot \nabla \varphi \,\mathrm{d} x \,\mathrm{d} t.
  \end{split}
  \end{equation*}

  Recalling \eqref{eq:sim:v_normal_velocity}, we use that $V = v \cdot \nu^-$ on $\Gamma(t)$ to obtain
  \begin{equation*}
    \int_0^T \!\int_{\Omega^-(t)} \partial_t \varphi \,\mathrm{d} x \,\mathrm{d} t + \int_{\Omega^-(0)} \varphi(0) \,\mathrm{d} x + \int_0^T \!\int_{\Omega^-(t)} v \cdot \nabla \varphi \,\mathrm{d} x \,\mathrm{d} t
    = 0.
  \end{equation*}
  As $\chi(t)$ and $\chi^{(i)}$ are the characteristic functions of $\Omega^-(t)$ and $\Omega^{-,(i)}=\Omega^-(0)$, respectively, see~\eqref{eq:def_chi} and \eqref{eq:def_chi_ic}, the identity \eqref{eq:transport_var} follows.
  This finishes the proof.
\end{proof}

The previous result motivates the following definition.

\begin{definition}[Weak solutions of the transport equation]\label{def:weak_sol_transport_eq}
  For prescribed functions $v\in L^2(0,T;L^2_{\sigma} (\Omega))$ and $\chi^{(i)} \in L^\infty(\Omega)$, $\chi \in L^\infty(\Omega \times (0,T))$ is called a weak solution of the transport equation
  \begin{equation}\label{eq:sim:transport_equation}
  \begin{aligned}
    \partial_t \chi + v \cdot \nabla \chi &= 0 &&\text{in} \ \Omega \times (0,T),\\
    \chi(0) &= \chi^{(i)} &&\text{in} \ \Omega
  \end{aligned}
  \end{equation}
  provided that for every $\varphi \in C_{(0)}^\infty(\overline{\Omega} \times [0,T))$, \eqref{eq:transport_var} holds true.
\end{definition}

\subsection{The Weak Formulation}

We seek to introduce a weak formulation for \eqref{eq:sim:bulk1}--\eqref{eq:sim:ic}.
To this end, we restrict the class of weak solutions to pairs $(\rho,v)$ satisfying the energy inequality~\eqref{eq:sim:energy_equality}.
For well-prepared initial data $(\rho^{(i)},v^{(i)})$, this suggests the regularity classes $\rho \in L^\infty(0,T; BV(\Omega, \{ \beta_1, \beta_2\} ) )$ and $v \in L^\infty(0,T; L^2_{\sigma}(\Omega)) \cap L^2(0,T; H^1_0(\Omega)^n)$.
For a.e.\ $t\in(0,T)$, there exist a measurable set $\Omega^-(t) \subset \Omega$ and an induced characteristic function $\chi(t) \in BV(\Omega,\{0,1\})$  of $\rho(t)$ such that, a.e.\ in $\Omega$, there holds
\begin{equation*}
  \chi(t)
  = \chi_{\Omega^-(t)}
  = \frac{\rho(t) - \beta_2}{\beta_1 - \beta_2}.
\end{equation*}
Here and subsequently, we refer $\Omega^-(t)$ to as measure-theoretic representative set of $\rho(t)$.
This, in turn, leads to the representation
\begin{equation*}
  \rho(t)
  = ( \beta_1 - \beta_2 ) \chi_{\Omega^-(t)} + \beta_2
  = ( \beta_1 - \beta_2 ) \chi(t) + \beta_2.
\end{equation*}
Notice that this procedure makes the identity $\Omega^-(t) = \left\{ x \in \Omega \,:\, \rho(t) = \beta_1 \right\}$ well-defined in a measure-theoretic sense.
As $\Omega^-(t)$ is of bounded variation, we may define the interface $\Gamma(t)$ by $\Gamma(t) = \partial^* (\Omega^-(t)) \cap \Omega$,
where $\partial^* (\Omega^-(t))$ denotes the reduced boundary of $\Omega^-(t)$.
Hence the variational formulation~\eqref{eq:sim:derivation_var_form} remains meaningful if we understand the outer unit normal $\nu^-$ in the (measure-theoretic) sense of the generalised outer unit normal given by
\begin{equation*}
  \nu^-(x,t) = - \lim_{\delta \rightarrow 0} \frac{\nabla \chi_{\Omega^-(t)} (B_\delta(x))}{\left| \nabla \chi_{\Omega^-(t)} \right|(B_\delta(x))}
  \ \text{for} \ x \in \Gamma(t).
\end{equation*}
Additionally, we require $\chi$ to solve the corresponding transport equation in the sense of Definition~\ref{def:weak_sol_transport_eq}.
and we maintain the assumption that $\Omega^-(t)$ is compactly contained in $\Omega$.
Finally, the results of the
Lemmas~\ref{lem:sim:var_form}--\ref{lem:sim:transport_eq}
motivate the following weak formulation of \eqref{eq:sim:bulk1}--\eqref{eq:sim:ic}.

\begin{definition}[Weak formulation]\label{def:sim:weak_solution}
 Let $\big( \rho^{(i)}, v^{(i)} \big) \in BV(\Omega,\{\beta_1,\beta_2\}) \times H^1_{0,\sigma}(\Omega)$ be prescribed initial data, such that the measure-theoretic representative set $\Omega^-(0)$ of $\rho^{(i)}$ is compactly contained in $\Omega$, i.e., $\Omega^-(0) \subset\subset \Omega$, and $\rho^{(i)}$ has the representation
  \begin{equation*}
    \rho^{(i)}
    = ( \beta_1 - \beta_2 ) \chi_{\Omega^-(0)} + \beta_2
    = ( \beta_1 - \beta_2 ) \chi^{(i)} + \beta_2,
  \end{equation*}
  where $\chi^{(i)}$ is the induced characteristic function of $\rho^{(i)}$ that is given by
  \begin{equation*}
    \chi^{(i)} = \frac{\rho^{(i)} - \beta_2}{\beta_1 - \beta_2} \in BV( \Omega , \{ 0 , 1 \} ).
  \end{equation*}
  Then $(\rho, v)$ is called a weak solution of \eqref{eq:sim:bulk1}--\eqref{eq:sim:ic} with prescribed initial data $( \rho^{(i)} , v^{(i)})$ if the following conditions are fulfilled.
  \begin{enumerate}
   \item \textbf{Regularity of associated density.}
      $\rho \in L^\infty(0,T;BV(\Omega,\{\beta_1,\beta_2\}))$, and the measure-theoretic representative set $\Omega^-(t)$ of $\rho(t)$ is compactly contained in $\Omega$; that is, for a.e.\ $t\in (0,T)$, there holds $\Omega^-(t) \subset\subset \Omega$.

    \item \textbf{Regularity of velocity.}
      $v \in L^\infty(0,T;L^2_{\sigma}(\Omega)) \cap L^2(0,T; H^1_0(\Omega)^n)$.
     
    \item \textbf{Weak form of linear-momentum balance.}
      For each $\psi \in C_0^\infty( [0,T) ; C_{0,\sigma}^\infty(\Omega) )$, there holds
      \begin{equation}\label{eq:sim:variational_formulation}
      \begin{split}
        &\int_0^T \!\int_{\Omega} \rho v \cdot \partial_t\psi + \rho v \otimes v:\nabla \psi  - 2 \mu( \rho ) D v : D \psi \,\mathrm{d}x \,\mathrm{d}t\\
         ={}& - \int_{\Omega} \rho^{(i)} v^{(i)} \cdot \psi(0)\,\mathrm{d}x
        - 2 \stc \int_0^T \!\int_{\Gamma(t)} \nu^- \otimes \nu^- : \nabla \psi \,\mathrm{d}\Hm^{n-1}(x) \,\mathrm{d}t,
      \end{split}
      \end{equation} 
      where $\Gamma(t) = \partial^* (\Omega^-(t))$ is the reduced boundary of $\Omega^-(t)$, and $\nu^-(t)$ denotes the corresponding generalised outer unit normal. 

   \item \textbf{Energy inequality.}
      For a.e.\ $\tau_1 \in [ 0,T )$, including $\tau_1 =0$, there holds
      \begin{equation}\label{eq:sim:energy_inequality}
      \begin{split}
        &2 \stc \Hm^{n-1}( \Gamma(\tau_2) )
        + \tfrac{1}{2} \int_{\Omega} \rho(\tau_2) \left| v(\tau_2) \right|^2 \,\mathrm{d}x    
        + 2 \int_{\tau_1}^{\tau_2} \!\int_{\Omega} \mu( \rho ) \left|D v \right|^2 \,\mathrm{d}x \,\mathrm{d}t \\
      \leq {}&2 \stc \Hm^{n-1}( \Gamma(\tau_1) ) + \tfrac{1}{2} \int_{\Omega} \rho(\tau_1) \left| v(\tau_1) \right|^2 \,\mathrm{d}x
      \end{split}
      \end{equation}
      for all $\tau_2 \in [\tau_1 ,T)$.
      
   \item \textbf{Transport equation.}
   The induced characteristic function $\chi$ given by $\chi = \chi_{\Omega^-(\cdot)}$, that is,
    \begin{equation*}
      \chi =  \frac{\rho - \beta_2}{\beta_1 - \beta_2},
    \end{equation*}
    is a weak solution of the transport equation \eqref{eq:sim:transport_equation} with velocity $v$ and prescribed initial data $\chi^{(i)}$ in the sense of Definition \ref{def:weak_sol_transport_eq}.  
  \end{enumerate}
\end{definition}

From now on, we will always consider weak solutions in the sense of the foregoing definition.
For convenience, for any weak solution $(\rho, v)$, we will use the notation
\begin{equation*}
  \Omega^+(t) = \Omega \setminus ( \Omega^-(t) \cup \Gamma(t) ),
\end{equation*}
where, as in the previous definition, $\Omega^-(t)$ denotes the measure-theoretic representative set of $\rho(t)$ and $\Gamma(t) = \partial^*(  \Omega^-(t) )$.
This means that, via $\Omega = \Omega^-(t) \cup \Gamma(t) \cup \Omega^+(t)$, this notation leads to a pairwise disjoint partition of $\Omega$.
Note that if the set $\Omega^-(t)$ is sufficiently smooth, its topological and reduced boundary coincide, i.e., $\Gamma(t) = \partial^*( \Omega^-(t) ) =  \partial(  \Omega^-(t) )$.
This is consistent with Assumptions~\ref{ass:sim:smoothness}.

\begin{remark}[Energy inequality]\label{rem:weaksol_energyineq}
  The energy inequality \eqref{eq:sim:energy_inequality} restricts the class of weak solutions in Definition~\ref{def:sim:weak_solution}.
  This approach is in the spirit of the theory of weak solutions for the incompressible Navier--Stokes equations: in this case, for $n=2$, weak solutions are unique, whereas, for $n=3$, it can be shown that weak solutions are unique if one weak solution satisfies an additional regularity assumption, referred to as Serrin's condition, cf.\ \cite[Theorem~V.1.5.1]{sohr}.
\end{remark}

\section{Lebesgue and Sobolev Spaces on Time-Dependent Domains}\label{sec:sobolev_time_dep}

We are interested in functions that take values in Lebesgue or Sobolev spaces on time-de\-pend\-ent domains $(\Omega(t))_{t\in[0,T]}$,
cf.\ also \cite{alphonse_evolving_spaces, alphonse_linear_parabolic, naegele, saal}.
%
%
%
%
We require the family $(\Omega(t))_{t\in[0,T]}$ to be parametrised in the following way, cf.\ \cite[Assumption~1.1]{saal}.
  
\begin{assumptions}[Time evolution]\label{ass:reg_omega_t}
  Let $\Omega \subset \R^n$, $n=2,3$, be a bounded domain with boundary $\partial \Omega$ of class $C^3$.
  Assume that the time evolution of the family $(\Omega(t))_{t\in[0,T]} \subset \Omega$ is described via a time-dependent $C^3$-diffeomorphism $\Phi(\cdot;t)\colon \overline{\Omega(0)} \rightarrow \overline{\Omega(t)}$, i.e., for every $t\in [0,T]$, there holds
  \begin{equation*}
    \Omega(t) = \big\{ \Phi(\xi;t) \,:\, \xi \in \Omega(0) \big\}
    \ \text{and} \ 
    \overline{\Omega(t)} = \big\{ \Phi(\xi;t) \,:\, \xi \in \overline{ \Omega(0) } \big\}.
  \end{equation*}
    Denote by $\nu= \nu( \cdot , t)$ the corresponding outer unit normal and by $V=V(\cdot,t)$ the normal velocity of $(\partial \Omega(t))_{t\in [ 0 , T ]}$ with respect to $\nu$.
For $Q \subset \R^n \times [0,T]$, $C^0_{\mathrm b}( Q )$ denotes the set of all bounded, continuous real-valued functions on $Q$ and $C^{3,1}_{\mathrm b}( Q )$ is given by
\begin{equation*}
  \big\{ u \in C^0_{\mathrm b} ( Q ) \,:\, \partial_t^s \partial^\alpha u \in C^0_{\mathrm b}( Q ), \ 1\leq 2s + \left| \alpha \right|_\ast \leq 3, \ s\in \N_0, \ \alpha \in \N^n_0 \big\},
\end{equation*}
where $\left| \alpha \right|_\ast = \alpha_1 + \alpha_2 + \dots + \alpha_n$.

  \begin{enumerate}
  \item \textbf{Regularity of initial domain.}
    The initial domain $\Omega(0) \subset \R^n$ is a bounded domain with $C^3$-boundary $\partial (\Omega(0))$ and let $Q_0 = \Omega(0) \times (0,T)$.
    
  \item \textbf{Regularity of $\Phi$.}
    $\Phi \in C^{3,1}_{\mathrm b}(\overline{Q_0})^n$.

  \item \textbf{Preservation of volume.}
    $\det ( \nabla \Phi(\xi;t) ) = 1$ for all $(\xi,t) \in \overline{Q_0}$.
  \end{enumerate}
\end{assumptions}

\begin{corollary}[Space-time domain]\label{cor:reg_inv_phi}
  Let $\Phi$ be as in Assumptions~\ref{ass:reg_omega_t}.
  Then the function $\varLambda\colon ( \xi ,t ) \mapsto ( \Phi ( \xi  ; t ) , t )$ belongs to $C^{3,1}_{\mathrm b} ( \overline{Q_0} )^{n+1}$.
  Moreover, $\varLambda$ is invertible with inverse function $\varLambda^{-1} \in C^{3,1}_{\mathrm b} ( \overline{\Omega_T} )^{n+1}$, where
  \begin{equation*}
    \Omega_T= \bigcup_{t\in(0,T)} \big(\Omega(t) \times \{ t \}\big) \subset \R^{n+1}.
  \end{equation*}
  In particular, $\Phi^{-1} \in C^{3,1}_{\mathrm b} ( \overline{\Omega_T} )^n$ and $\Omega_T$ has a Lipschitz boundary.
\end{corollary}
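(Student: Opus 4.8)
The plan is to produce $\varLambda^{-1}$ explicitly and to control all of its mixed space--time derivatives by implicit differentiation, the whole argument resting on the two structural features of Assumptions~\ref{ass:reg_omega_t}: the anisotropic (parabolic) regularity $\Phi \in C^{3,1}_{\mathrm b}(\overline{Q_0})^n$ and the volume constraint $\det(\nabla_\xi \Phi) \equiv 1$. First I would settle $\varLambda \in C^{3,1}_{\mathrm b}(\overline{Q_0})^{n+1}$. Its first $n$ components are those of $\Phi$, hence lie in $C^{3,1}_{\mathrm b}(\overline{Q_0})$ by assumption, while its last component is the projection $(\xi,t)\mapsto t$, which is bounded on the bounded set $\overline{Q_0}$ and whose only nonvanishing derivative in the admissible range $1\le 2s+\left|\alpha\right|_\ast\le 3$ is $\partial_t t = 1$; thus it too belongs to $C^{3,1}_{\mathrm b}$. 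Since each $\Phi(\cdot;t)\colon \overline{\Omega(0)}\to\overline{\Omega(t)}$ is a $C^3$-diffeomorphism, $\varLambda$ is a bijection of $\overline{Q_0}$ onto $\overline{\Omega_T}$ with $\varLambda^{-1}(x,t)=(\Phi^{-1}(x;t),t)$, where $\Phi^{-1}(\cdot;t)$ denotes the spatial inverse.

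Next I would record the Jacobian. Computing,
\[
  D\varLambda = \begin{pmatrix} \nabla_\xi\Phi & \partial_t\Phi \\ 0 & 1\end{pmatrix}, \qquad \det D\varLambda = \det(\nabla_\xi\Phi) = 1,
\]
so $\varLambda$ is a genuine $C^1$-diffeomorphism and, crucially, $(\nabla_\xi\Phi)^{-1}=\operatorname{adj}(\nabla_\xi\Phi)$ equals the adjugate matrix, a polynomial in the first-order spatial derivatives of $\Phi$. In this way no division by a (possibly ill-behaved) determinant is ever needed, and the entries of the inverse matrix retain the regularity of $\nabla_\xi\Phi$.

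The main step is the regularity of $\Phi^{-1}$. Differentiating the identity $\Phi(\Phi^{-1}(x;t);t)=x$ yields $\nabla_x\Phi^{-1}=\operatorname{adj}(\nabla_\xi\Phi)\circ\varLambda^{-1}$ and $\partial_t\Phi^{-1}=-\big(\operatorname{adj}(\nabla_\xi\Phi)\,\partial_t\Phi\big)\circ\varLambda^{-1}$. Starting from the spatial inverse function theorem, which gives $\Phi^{-1}(\cdot;t)\in C^3$ together with joint continuity in $(x,t)$, I would bootstrap: repeatedly differentiating these two identities and using that $C^{3,1}_{\mathrm b}$ is an algebra (stable under products) and stable under composition with the $\varLambda^{-1}$-regularity already obtained at the previous stage, one expresses every admissible derivative $\partial_t^s\partial^\alpha\Phi^{-1}$ with $1\le 2s+\left|\alpha\right|_\ast\le 3$ in terms of derivatives of $\Phi$ lying in the same admissible range. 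The delicate point, and the main obstacle, is the anisotropic bookkeeping inherent in the parabolic scale, where one time derivative counts as two spatial ones: the worst term arises for $(s,\left|\alpha\right|_\ast)=(1,1)$, where differentiating $\nabla_x\Phi^{-1}$ in $t$ produces a factor $\partial_t\nabla_\xi\Phi$ of weight $2+1=3$, sitting exactly at the top of the $C^{3,1}$ scale. One must verify that no term of weight exceeding $3$ is ever generated; this is precisely where the assumption $\Phi\in C^{3,1}_{\mathrm b}$ is sharp. Together with the trivially regular last component $(x,t)\mapsto t$, this gives $\varLambda^{-1}\in C^{3,1}_{\mathrm b}(\overline{\Omega_T})^{n+1}$, and hence $\Phi^{-1}\in C^{3,1}_{\mathrm b}(\overline{\Omega_T})^n$.

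Finally, I would deduce the Lipschitz regularity of $\partial\Omega_T$. Since $\varLambda,\varLambda^{-1}\in C^{3,1}_{\mathrm b}\subset C^1_{\mathrm b}$ on the relevant bounded closed sets, $\varLambda$ restricts to a bi-Lipschitz homeomorphism of $\overline{Q_0}$ onto $\overline{\Omega_T}$. As $Q_0=\Omega(0)\times(0,T)$ is the product of the bounded $C^3$-domain $\Omega(0)$ with an interval, it is a bounded Lipschitz domain; and since bi-Lipschitz images of Lipschitz domains are again Lipschitz, the set $\Omega_T$ has Lipschitz boundary, which completes the proof.
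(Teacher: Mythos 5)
Your argument follows the same route as the paper's (which is extremely terse on this point: it simply asserts that the regularity of $\varLambda^{-1}$ ``follows''), and the detail you supply for the main step is correct and worth having. In particular, the observation that $\det(\nabla_\xi\Phi)\equiv 1$ lets you write $(\nabla_\xi\Phi)^{-1}=\operatorname{adj}(\nabla_\xi\Phi)$, so the entries of the inverse Jacobian are polynomials in first-order derivatives of $\Phi$ and no division is needed, and your bookkeeping of the parabolic weight $2s+\left|\alpha\right|_\ast$ in the bootstrap (with the critical term $\partial_t\nabla_\xi\Phi$ of weight $3$) is exactly the check one has to make. That part of the proposal is sound.

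There is, however, one genuine flaw in the last step: the principle ``bi-Lipschitz images of Lipschitz domains are again Lipschitz'' is false in general. The standard counterexample is a logarithmic twist such as $(r,\theta)\mapsto(r,\theta+\varepsilon\log r)$ near the origin, which is bi-Lipschitz but maps a half-space onto a spiralling domain whose boundary is not locally a Lipschitz graph; the class of Lipschitz domains is not invariant under bi-Lipschitz homeomorphisms. What \emph{is} true, and what you should invoke instead, is that the Lipschitz property (equivalently, the uniform interior and exterior cone condition) is preserved under $C^1$-diffeomorphisms up to the closure: locally the map is a small perturbation of an invertible affine map, which carries cones to cones of controlled opening. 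Since you have already established $\varLambda\in C^{3,1}_{\mathrm b}(\overline{Q_0})^{n+1}$ and $\varLambda^{-1}\in C^{3,1}_{\mathrm b}(\overline{\Omega_T})^{n+1}$, in particular both are $C^1$ on the respective closures, this stronger fact is at your disposal and closes the gap; the rest of the step (that the cylinder $Q_0=\Omega(0)\times(0,T)$ over a bounded $C^3$-domain is Lipschitz, including along the edges) is fine.
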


\begin{proof}
  As $\Phi \in C^{3,1}_{\mathrm b} ( \overline{Q_0} )^n$ by Assumptions~\ref{ass:reg_omega_t}, it follows that $\varLambda \in C^{3,1}_{\mathrm b} ( \overline{Q_0} )^{n+1}$.
  Moreover, $\varLambda$ is invertible and $\varLambda^{-1}( x ,t ) = ( \Phi^{-1}( x ; t ) , t )$.
  Hence, $\varLambda^{-1} \in C^{3,1}_{\mathrm b} ( \overline{\Omega_T} )^{n+1}$, and thus $\Phi^{-1} \in C^{3,1}_{\mathrm b} ( \overline{\Omega_T } )^n$.
  Observing that $\partial ( Q_0 )$ is Lipschitz and that $\Omega_T = \varLambda ( Q_0 )$ finishes the proof.
\end{proof}

\begin{proposition}[Normal velocity]\label{prop:representation_normal velocity}
  Suppose that Assumptions~\ref{ass:reg_omega_t} hold true.
  Then, for every $(x_0,t_0) \in \bigcup_{t\in[ 0,T ]} \big( \partial \Omega (t) \times \{ t \} \big)$, there holds
  \begin{equation*}
    V(x_0,t_0)
    = ( \partial_t \Phi)( \Phi^{-1} (x_0;t_0) ; t_0 ) \cdot \nu(x_0,t_0).
  \end{equation*}
\end{proposition}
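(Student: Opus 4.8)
The plan is to combine the definition of the normal velocity from Definition~\ref{def:evolving_hypersurfaces} with the fact that the parametrising diffeomorphism carries the initial boundary onto $\partial\Omega(t)$, and then to exhibit a convenient admissible curve whose derivative is precisely $\partial_t\Phi$. First I would fix a point $(x_0,t_0)\in\bigcup_{t\in[0,T]}\big(\partial\Omega(t)\times\{t\}\big)$ and set $\xi_0=\Phi^{-1}(x_0;t_0)$, which is well-defined and regular by Corollary~\ref{cor:reg_inv_phi}. The key preliminary observation is that $\xi_0\in\partial\Omega(0)$: for each $t$, the map $\Phi(\cdot;t)$ is a bijection of $\overline{\Omega(0)}$ onto $\overline{\Omega(t)}$ that carries the open set $\Omega(0)$ onto $\Omega(t)$ (by Assumptions~\ref{ass:reg_omega_t}), hence it must carry $\partial\Omega(0)=\overline{\Omega(0)}\setminus\Omega(0)$ bijectively onto $\partial\Omega(t)=\overline{\Omega(t)}\setminus\Omega(t)$. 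Applying this at $t=t_0$ to the boundary point $x_0$ gives $\xi_0\in\partial\Omega(0)$.

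Next I would introduce the curve $\eta(t)=\Phi(\xi_0;t)$. By the regularity $\Phi\in C^{3,1}_{\mathrm b}(\overline{Q_0})^n$ from Assumptions~\ref{ass:reg_omega_t}, the map $t\mapsto\eta(t)$ is $C^1$ (indeed more), while the boundary-mapping property just established shows $\eta(t)=\Phi(\xi_0;t)\in\partial\Omega(t)$ for all $t$, and $\eta(t_0)=\Phi(\xi_0;t_0)=x_0$. Thus $\eta$ is an admissible curve in the sense of Definition~\ref{def:evolving_hypersurfaces}, so that definition yields $V(x_0,t_0)=\eta'(t_0)\cdot\nu(x_0,t_0)$. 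Since $\eta'(t_0)=(\partial_t\Phi)(\xi_0;t_0)=(\partial_t\Phi)(\Phi^{-1}(x_0;t_0);t_0)$, this is exactly the asserted identity; the Remark following Definition~\ref{def:evolving_hypersurfaces} guarantees that $V$ is independent of the chosen curve, so the computation is legitimate regardless of which admissible $\eta$ one uses.

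The only point requiring genuine care is the boundary-mapping property $\Phi(\partial\Omega(0);t)=\partial\Omega(t)$, but this rests solely on $\Phi(\cdot;t)$ being a homeomorphism of $\overline{\Omega(0)}$ onto $\overline{\Omega(t)}$ whose restriction to the open interiors is a bijection onto $\Omega(t)$, so no input beyond Assumptions~\ref{ass:reg_omega_t} is needed. Everything else amounts to differentiating the explicit parametrisation $t\mapsto\Phi(\xi_0;t)$ in time and pairing the result with the unit normal, which I expect to pose no difficulty.
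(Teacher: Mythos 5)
Your proof is correct and follows essentially the same route as the paper: both arguments take the curve $\eta(t)=\Phi(\Phi^{-1}(x_0;t_0);t)$, verify it is admissible for Definition~\ref{def:evolving_hypersurfaces} using the boundary-mapping property of $\Phi(\cdot;t)$, and differentiate. Your extra justification that $\Phi(\cdot;t)$ carries $\partial\Omega(0)$ onto $\partial\Omega(t)$ is a point the paper states without proof, so your write-up is if anything slightly more complete.
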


\begin{proof}
  For $t_0 \in [0,T]$, fix $x_0 \in \partial \Omega (t_0)$.
  By Assumptions~\ref{ass:reg_omega_t}, restriction to the respective boundaries yields diffeomorphisms $\Phi^{-1}(\cdot;t_0)\colon \partial \Omega (t_0) \rightarrow \partial \Omega(0)$ and, for $t\in [0,T]$, $\Phi(\cdot;t)\colon \partial \Omega (0) \rightarrow \partial \Omega(t)$.
  Therefore, $t \mapsto \eta(t) = \Phi ( \Phi^{-1}(x_0; t_0) ; t) \in \partial \Omega(t)$ defines a $C^1$-mapping $\eta\colon [0,T] \rightarrow \R^n$ with $\eta(t_0) = \Phi ( \Phi^{-1}(x_0; t_0) ; t_0) = x_0$. 
  Thus $\eta$ is an admissible choice in Definition~\ref{def:evolving_hypersurfaces}, which yields
  \begin{equation*}
    V(x_0,t_0)
    = \eta^\prime(t_0) \cdot \nu(x_0,t_0)
    = ( \partial_t \Phi)( \Phi^{-1} (x_0;t_0); t_0 ) \cdot \nu(x_0,t_0).
  \end{equation*}
  Consequently, $V$ has the stated representation in terms of $\Phi$.
\end{proof}

By means of the transformation $\Phi(\cdot;t)\colon \overline{\Omega(0)} \rightarrow \overline{\Omega(t)}$, we may transform Lebesgue and Sobolev functions defined on $\Omega(t)$ to functions on $\Omega(0)$.
For this purpose, for $t\in [0,T]$, we introduce the transformation $\Phi_\ast(t)$ defined by
\begin{equation}\label{eq:def_Phi_ast_t}
  ( \Phi_\ast(t) f ) (\xi) = (\nabla \Phi)^{-1}( \Phi(\xi;t) ; t ) f ( \Phi(\xi;t) )
\end{equation}
for $\xi \in \overline{ \Omega(0) }$ and $f\colon \overline{ \Omega(t) }\rightarrow \R^n$; see \cite[equation~(10)]{saal}.
The main properties of the transformation \eqref{eq:def_Phi_ast_t} are collected in the next lemma; see also \cite[Section~3]{saal}.    
In particular, it turns out that $\Phi_\ast(t)$ defines a divergence-preserving operator.

\begin{lemma}[Properties of $\Phi_\ast(t)$]\label{lem:transformation_time_dep_domains}
    Suppose that $(\Omega(t))_{t\in[0,T]}$ is as in Assumptions~\ref{ass:reg_omega_t}.
    Let $k=0,1,2$, $l = 1,2$, $q\in [ 1,\infty ]$ and $t\in [0,T]$.
    Then the operator $\Phi_\ast(t)$ defined by \eqref{eq:def_Phi_ast_t} has the following properties.
    \begin{enumerate}
    \item
      The mapping $\Phi_\ast(t)\colon W^{k,q}(\Omega(t))^n \rightarrow W^{k,q}(\Omega(0))^n$ is an isomorphism.
      Its inverse operator $\Phi_\ast^{-1}(t)$ is given by $( \Phi_\ast^{-1}(t) h ) ( x ) = ( \nabla \Phi )( x ; t) h ( \Phi^{-1} (x;t) )$ for $h \in W^{k,q}(\Omega(0))^n$.
       
    \item
      There are constants $C_1, C_2 >0$, which do not depend on $t$, such that
      \begin{equation}\label{eq:phi_ast_continuity}
        C_1 \| \Phi_\ast(t) f \|_{W^{k,q}(\Omega(0))^n}
        \leq \| f \|_{W^{k,q}(\Omega(t))^n}
        \leq C_2 \| \Phi_\ast(t) f \|_{W^{k,q}(\Omega(0))^n}.
      \end{equation}
    
    \item
      The mapping $\Phi_\ast(t)\colon W^{l,q}_0(\Omega(t))^n \rightarrow W^{l,q}_0(\Omega(0))^n$ is an isomorphism.
  
    \item
      For any $f \in W^{1,q}(\Omega(t))^n$, there holds $\dive ( f ) \circ \Phi(\cdot;t) \in L^q( \Omega(0) )$ and $\dive ( \Phi_\ast(t) f ) = \dive ( f ) \circ  \Phi(\cdot;t)$.
      Moreover, $\Phi_\ast(t)\colon L^q_{\sigma}(\Omega(t)) \rightarrow L^q_{\sigma}(\Omega(0))$ is an isomorphism.
   \end{enumerate}
  \end{lemma}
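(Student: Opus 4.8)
The plan is to recognise $\Phi_\ast(t)$ as the (volume-preserving) Piola transform associated with the $C^3$-diffeomorphism $\Phi(\cdot;t)$: since $\det(\nabla\Phi)=1$, formula~\eqref{eq:def_Phi_ast_t} is equivalent to $(\Phi_\ast(t)f)(\xi) = (\nabla_\xi\Phi(\xi;t))^{-1} f(\Phi(\xi;t))$. I would prove the four items in the order (1)--(2), then (4), then (3), treating the divergence identity in (4) as the structural heart of the statement. Throughout, the uniform control comes from the fact that $\overline{Q_0} = \overline{\Omega(0)}\times[0,T]$ is compact and $\Phi\in C^{3,1}_{\mathrm b}(\overline{Q_0})^n$, so $\Phi$, $\nabla\Phi$, $(\nabla\Phi)^{-1}$ and their $\xi$-derivatives up to order two are bounded uniformly in $(\xi,t)$.

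For (1) and (2): composition $f\mapsto f\circ\Phi(\cdot;t)$ maps $W^{k,q}(\Omega(t))\to W^{k,q}(\Omega(0))$ for $k\le 3$ by the change-of-variables and chain rules, and, because $\det(\nabla\Phi)=1$, it is an isometry on the $L^q$-part. The matrix multiplier $(\nabla\Phi(\cdot;t))^{-1}$ is $C^2$ with all derivatives up to order two uniformly bounded, so the product rule shows that $\Phi_\ast(t)\colon W^{k,q}(\Omega(t))^n\to W^{k,q}(\Omega(0))^n$ is bounded with $t$-independent constants; the same reasoning applied to the stated inverse gives the reverse bound. This yields both the two-sided estimate~\eqref{eq:phi_ast_continuity} and the isomorphism claim, once one checks $\Phi_\ast^{-1}(t)\Phi_\ast(t)=\mathrm{id}$ by a direct chain-rule computation cancelling $(\nabla\Phi)(\nabla\Phi)^{-1}=I$.

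The crux is (4). For $f\in C^1$ I would compute $\dive_\xi(\Phi_\ast(t)f)$ directly: the terms in which the derivative falls on $f$ reassemble, via $\sum_i A_{li}(A^{-1})_{ik}=\delta_{lk}$, into $(\dive f)\circ\Phi$, while the terms in which the derivative falls on the multiplier vanish by the Piola (Jacobi) identity $\dive(\mathrm{cof}(\nabla\Phi))=0$ together with $A^{-1}=\mathrm{cof}(A)^{\mathsf T}$ (valid since $\det A=1$). Density then extends $\dive(\Phi_\ast(t)f)=(\dive f)\circ\Phi(\cdot;t)$ to all $f\in W^{1,q}$. For the $L^q_\sigma$-statement I would note that $\Phi_\ast(t)$ carries $C^\infty_{0,\sigma}(\Omega(t))$ into the set of compactly supported, divergence-free $W^{1,q}$-fields on $\Omega(0)$ (compact support because $\Phi^{-1}(\cdot;t)$ maps compact subsets of $\Omega(t)$ to compact subsets of $\Omega(0)$, divergence-free by the identity just shown); mollification --- which commutes with $\dive$ and, for compactly supported fields, produces $C^\infty_{0,\sigma}$-approximants converging in $L^q$ --- places such fields in $L^q_\sigma(\Omega(0))$. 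Passing to closures and arguing identically for $\Phi_\ast^{-1}(t)$ (which satisfies $\dive(\Phi_\ast^{-1}(t)h)=(\dive h)\circ\Phi^{-1}(\cdot;t)$) gives the isomorphism $L^q_\sigma(\Omega(t))\to L^q_\sigma(\Omega(0))$.

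For (3) I would approximate $u\in W^{l,q}_0(\Omega(t))^n$ by $C^\infty_0(\Omega(t))^n$-functions; their images under $\Phi_\ast(t)$ have compact support in $\Omega(0)$ and hence lie in $W^{l,q}_0(\Omega(0))^n$ (a compactly supported $W^{l,q}$-function belongs to $W^{l,q}_0$), so by the boundedness in (1)--(2) the limit $\Phi_\ast(t)u$ does too; the inverse is handled symmetrically. The main obstacle, and the reason (3) and the $L^q_\sigma$-part of (4) cannot be read off directly, is that $\Phi_\ast(t)$ does \emph{not} preserve smoothness --- since $\Phi$ is only $C^3$, images of test functions are merely $C^2$ --- so one cannot simply map test functions to test functions and must instead route the closed-subspace statements through compact support plus mollification. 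The secondary technical point is to ensure the constants in~\eqref{eq:phi_ast_continuity} are genuinely independent of $t$, which is exactly what the uniform $C^{3,1}_{\mathrm b}$-bounds on the compact cylinder $\overline{Q_0}$ provide.
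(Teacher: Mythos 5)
The paper gives no proof of this lemma---it declares the argument straightforward and defers to the thesis---so there is nothing of the authors' to compare against line by line; your proposal supplies the standard argument one would expect there, and it is correct in substance. Identifying $\Phi_\ast(t)$ as the Piola transform, obtaining the divergence identity in (4) from $\sum_k\partial_{\xi_k}(A^{-1})_{kj}=0$ (the Piola identity for $A=\nabla\Phi$, using $\det A=1$ so that $A^{-1}=\mathrm{cof}(A)^{\mathsf T}$), drawing the $t$-uniformity of the constants in \eqref{eq:phi_ast_continuity} from the $C^{3,1}_{\mathrm b}$-bounds on the compact cylinder $\overline{Q_0}$, and routing the closed-subspace statements (item (3) and the $L^q_{\sigma}$-part of (4)) through compact support plus mollification rather than through ``test functions map to test functions'' (which, as you correctly note, fails because $\Phi_\ast(t)$ only preserves $C^2$-regularity) is exactly the content the paper delegates to the reference.

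The one place your write-up needs repair is the endpoint $q=\infty$, which the statement includes. Two steps are phrased as density arguments that are unavailable there: first, ``density then extends $\dive(\Phi_\ast(t)f)=(\dive f)\circ\Phi(\cdot;t)$ to all $f\in W^{1,q}$''---smooth functions are not dense in $W^{1,\infty}$, so for $q=\infty$ you must instead derive the identity directly from the weak chain and product rules, which hold for compositions of $W^{1,q}$-functions with $C^1$-diffeomorphisms for every $q\in[1,\infty]$ and in fact make the density step unnecessary for all $q$; second, the parenthetical ``a compactly supported $W^{l,q}$-function belongs to $W^{l,q}_0$'' is false for $q=\infty$ (a compactly supported Lipschitz function with discontinuous gradient does not lie in $W^{1,\infty}_0$). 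Item (3) still holds at $q=\infty$, but for a different reason: the image of a $C^\infty_0$-test function under $\Phi_\ast(t)$ is $C^l$ with compact support for $l\le 2$, and such functions are approximable in $W^{l,\infty}$ by mollification since their derivatives up to order $l$ are uniformly continuous. The $L^q_{\sigma}$-statement is unaffected, as the paper only defines $L^r_{\sigma}$ for $r<\infty$. These are one-line patches, not structural flaws.
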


  \begin{proof}
The proof is straightforward.
See \cite[Lemma~4.4.6]{daube} for details.
\end{proof}

We are interested in functions of the form $t \mapsto f(t) \in L^q( \Omega(t) )$ or $t \mapsto f(t) \in W^{k,q}( \Omega(t) )$.
To define these function spaces, we will always suppose that $(\Omega(t))_{t\in[0,T]}$ satisfies the regularity conditions gathered together in Assumptions~\ref{ass:reg_omega_t}.
For functions $f\in L^1_{\mathrm{loc}}(\Omega_T)$, the distributional derivatives $\partial_t^k \partial^\alpha f$ with $( k, \alpha) \in \N_0 \times \N^n_0$ are well-defined.
This allows us to define the following Bochner-type function spaces.
\begin{definition}[Lebesgue and Sobolev spaces on time-dependent domains]\label{def:Lebesgue_time_dep_domains}
  Suppose that $(\Omega(t))_{t\in[0,T]}$ satisfies Assumptions~\ref{ass:reg_omega_t}.
  Let $s,r \in [1,\infty]$ and $q \in \N_0$.
  \begin{enumerate}
  \item
    The space $L^s( 0,T; L^r(\Omega(t)) )$ consists of all $f\in L^1_{\mathrm{loc}}(\Omega_T)$ such that $f(t) \in L^r(\Omega(t))$ for a.e.\ $t\in (0,T)$, and $( t \mapsto \| f(t) \|_{L^r(\Omega(t))} ) \in L^s(0,T)$.
  
  \item
    The space $L^s( 0,T; W^{q,r}( \Omega(t)) )$ consists of all $f\in L^s( 0,T; L^r(\Omega(t)) )$ such that, for all $\alpha \in \N_0^n$ with $\alpha_1+\alpha_2+ \dots + \alpha_n \leq q$, there holds $\partial^\alpha f \in L^s( 0,T; L^r(\Omega(t)) )$.
    
  \item
    The space $W^{1,s}( 0,T; W^{q,r}( \Omega(t)) )$ consists of all $f\in L^s( 0,T; W^{q,r}( \Omega(t)) )$ such that $\partial_t f\in L^s( 0,T; W^{q,r}( \Omega(t)) )$.

   \item
    The vector-valued versions of the above spaces are given by
    \begin{equation*}
    \begin{split}
      L^s( 0,T; W^{q,r}( \Omega(t))^n ) &= L^s( 0,T; W^{q,r}( \Omega(t)) )^n,\\
      W^{1,s}( 0,T; W^{q,r}( \Omega(t))^n ) &= W^{1,s}( 0,T; W^{q,r}( \Omega(t)) )^n.
    \end{split}
    \end{equation*}
    
  \item
    Let $X(t)$ stand for either $W^{q,r}( \Omega(t) )$ or $W^{q,r}( \Omega(t) )^n$.
    The space $L^s(0,T ; X(t))$ is equipped with the norm
    \begin{equation*}
      \| f \|_{L^s(0,T ; X(t))}
      =
      \begin{cases}
        \left( \int_0^T \| f(t) \|_{X(t)}^s \,\mathrm{d}t \right)^{\scriptstyle \frac{1}{s}} & \text{if} \ s < \infty,\\
        \esssup_{t\in(0,T)}\| f(t) \|_{X(t)} & \text{if} \ s=\infty.
    \end{cases}
    \end{equation*}
    The space $W^{1,s}(0,T ; X(t))$ is equipped with the norm
    \begin{equation*}
      \| f \|_{W^{1,s}(0,T ; X(t))}
      =
      \left( \| f \|_{L^s(0,T ; X(t))}^2 + \| \partial_t f \|_{L^s(0,T ; X(t))}^2 \right)^{\scriptstyle \frac{1}{2}}.
    \end{equation*}
\end{enumerate}
\end{definition}

\begin{remark}
  We want to point out that, in the foregoing Definition~\ref{def:Lebesgue_time_dep_domains} we crucially used the fact that all defined function spaces are subspaces of $L^1( \Omega_T )$.
\end{remark}
   
We may use $\Phi_\ast(t)$ to transform functions from the previous definitions to functions taking values in time-independent Lebesgue or Sobolev spaces, i.e., functions belonging to the usual Bochner spaces.
To this end, we define $\Phi_\ast f$ by
\begin{equation}\label{eq:def_Phi_ast}
  t \mapsto \Phi_\ast(t)f(\cdot,t).
\end{equation}

Owing to the time-independent bounds on $\Phi_\ast(t)$ and its inverse $\Phi_\ast^{-1}(t)$  from Lemma~\ref{lem:transformation_time_dep_domains}, the transformation properties carry over to $\Phi_\ast$, as we now show.
The function spaces introduced in Definition~\ref{def:Lebesgue_time_dep_domains} are transformed as follows.

\begin{proposition}[Properties of $\Phi_\ast$]\label{prop:transformation_Phi_ast}
  Suppose that Assumptions~\ref{ass:reg_omega_t} hold true.
  Let $s\in [1,\infty]$, $q \in [1,\infty)$ and $k=0,1,2$.
  Denote by $X(\tau)$, $\tau \in [0,T]$, either of the spaces $W^{k,q}(\Omega(\tau))^n$, $W^{k,q}_0(\Omega(\tau))^n$ or $L^q_{\sigma}(\Omega(\tau))$.
  Then $\Phi_\ast$, given by \eqref{eq:def_Phi_ast}, is a diffeomorphism between the spaces $L^s( 0,T ; X(t) )$ and $L^s( 0,T ; X(0) )$ as well as between the spaces
  \begin{equation*}
    W^{1,s} ( 0,T ; L^q(\Omega(t))^n ) \cap L^s( 0,T ; W^{1,q}(\Omega(t))^n )
  \end{equation*}
  and
  \begin{equation*}
    W^{1,s} ( 0,T ; L^q(\Omega(0))^n )\cap L^s( 0,T ; W^{1,q}(\Omega(0))^n ).
  \end{equation*}
\end{proposition}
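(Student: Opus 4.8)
The plan is to reduce the assertion to the pointwise-in-time mapping properties of $\Phi_\ast(t)$ collected in Lemma~\ref{lem:transformation_time_dep_domains}, the decisive point being that the operator bounds in \eqref{eq:phi_ast_continuity} are \emph{uniform in $t\in[0,T]$}. Since $\Phi_\ast$ acts linearly in $f$, by a diffeomorphism we here understand a bounded linear isomorphism with bounded inverse, the natural candidate for the inverse being $h\mapsto\bigl(t\mapsto\Phi_\ast^{-1}(t)h(\cdot,t)\bigr)$ with $\Phi_\ast^{-1}(t)$ as in Lemma~\ref{lem:transformation_time_dep_domains}(1). I would treat the family $L^s(0,T;X(t))$ first and only afterwards the intersection space, where the non-commutativity of $\Phi_\ast$ with $\partial_t$ is the actual difficulty.

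For the first claim the only nontrivial points are measurability and the norm equivalence. Writing $\varLambda$ for the space-time diffeomorphism of Corollary~\ref{cor:reg_inv_phi}, one has, on $Q_0$, the representation $(\Phi_\ast f)(\xi,t)=A(\xi,t)\,(f\circ\varLambda)(\xi,t)$, where $A$ is the bounded, continuous matrix multiplier built from $\nabla\Phi$ in \eqref{eq:def_Phi_ast_t}. Because $\varLambda$ is a bi-Lipschitz $C^{3,1}_{\mathrm b}$-map with unit Jacobian and $A$ is bounded, $f\in L^1_{\mathrm{loc}}(\Omega_T)$ gives $\Phi_\ast f\in L^1_{\mathrm{loc}}(Q_0)$; as $q<\infty$ the target $X(0)$ is separable, so this membership together with the slicewise regularity $\Phi_\ast(t)f(t)\in X(0)$ identifies $\Phi_\ast f$ with a strongly measurable $X(0)$-valued function. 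The two-sided estimate \eqref{eq:phi_ast_continuity}, valid for $X=W^{k,q}(\cdot)^n$ and inherited by the closed subspaces $W^{k,q}_0(\cdot)^n$ and $L^q_\sigma(\cdot)$ through Lemma~\ref{lem:transformation_time_dep_domains}(3)--(4), then integrates (or passes to the essential supremum for $s=\infty$) to
\[
  C_1\,\|\Phi_\ast f\|_{L^s(0,T;X(0))}
  \le \|f\|_{L^s(0,T;X(t))}
  \le C_2\,\|\Phi_\ast f\|_{L^s(0,T;X(0))}.
\]
Running the same estimate for the candidate inverse and using $\Phi_\ast(t)\Phi_\ast^{-1}(t)=\mathrm{id}$ for every $t$ shows that $\Phi_\ast$ is the claimed isomorphism onto $L^s(0,T;X(0))$.

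For the intersection space the component $L^s(0,T;W^{1,q}(\Omega(0))^n)$ is already covered by the case $k=1$ above, so it remains to control $\partial_t(\Phi_\ast f)$ in $L^s(0,T;L^q(\Omega(0))^n)$. The heart of the matter is the commutation formula obtained by differentiating $(\Phi_\ast f)=A\,(f\circ\varLambda)$ in $t$ and applying the chain rule to $f\circ\varLambda$, namely, schematically,
\[
  \partial_t(\Phi_\ast f)
  = (\partial_t A)\,(f\circ\varLambda)
  + A\,\bigl((\partial_t f)\circ\varLambda\bigr)
  + A\,\bigl((\nabla f)\circ\varLambda\bigr)\,\partial_t\Phi .
\]
The coefficients $A$, $\partial_t A$ and $\partial_t\Phi$ are bounded and continuous by Assumptions~\ref{ass:reg_omega_t} and Corollary~\ref{cor:reg_inv_phi}. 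The first two terms are estimated in $L^s(0,T;L^q)$ by the norms of $f$ and $\partial_t f$, while the third is controlled by that of $\nabla f$ --- this is precisely why the hypothesis contains $f\in L^s(0,T;W^{1,q}(\Omega(t))^n)$ --- all three bounds using the change of variables $x=\Phi(\xi;t)$, whose Jacobian equals $1$ by the volume-preservation Assumption~\ref{ass:reg_omega_t}(3).

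The step I expect to be the main obstacle is justifying this formula at the level of \emph{weak}, rather than classical, time derivatives. Here I would exploit that membership in $W^{1,s}(0,T;L^q(\Omega(t))^n)\cap L^s(0,T;W^{1,q}(\Omega(t))^n)$ furnishes a combined space-time weak gradient $(\nabla_x f,\partial_t f)\in L^1_{\mathrm{loc}}(\Omega_T)$, so that the chain rule for Sobolev functions composed with the $C^1$-diffeomorphism $\varLambda$ produces the weak derivatives of $f\circ\varLambda$ on $Q_0$; multiplying by the smooth coefficient $A$ then yields the displayed expression as a bona fide weak time derivative, and the uniform bounds place it in $L^s(0,T;L^q(\Omega(0))^n)$ with norm dominated by that of $f$. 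The reverse inclusion is entirely symmetric, using $\Phi_\ast^{-1}(t)h=(\nabla\Phi)(\cdot;t)\,h(\Phi^{-1}(\cdot;t))$ and $\Phi^{-1}\in C^{3,1}_{\mathrm b}(\overline{\Omega_T})^n$ from Corollary~\ref{cor:reg_inv_phi}; since the two maps invert one another for a.e.\ fixed $t$, they invert one another as maps between the intersection spaces, which completes the argument.
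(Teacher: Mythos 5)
Your proposal is correct and follows essentially the same route as the paper: the $L^s(0,T;X(t))$ part is reduced to the uniform-in-$t$ isomorphism properties of $\Phi_\ast(t)$ from Lemma~\ref{lem:transformation_time_dep_domains}, and the intersection space is handled by differentiating the representation $\Phi_\ast f = ((\nabla\Phi)^{-1}\circ\varLambda)(f\circ\varLambda)$ via the product and chain rules, yielding exactly the commutation formula the paper uses and bounding each term through the boundedness of the coefficients and the volume-preserving change of variables. Your additional remarks on strong measurability and on justifying the chain rule for weak derivatives fill in details the paper leaves implicit, but do not change the argument.
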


\begin{proof}
  By Lemma~\ref{lem:transformation_time_dep_domains}, $\Phi_\ast$ is an isomorphism between spaces of the form $L^s( 0,T ; X(t) )$ and $L^s( 0,T ; X(0) )$.
  For the proof of the remaining claim, we study the transformation of time derivatives.
  Let $f \in W^{1,s} ( 0,T ; L^q(\Omega(t))^n ) \cap L^s( 0,T ; W^{1,q}(\Omega(t))^n )$.
  Hence $\Phi_\ast f \in L^s ( 0,T ; W^{1,q}(\Omega(0))^n )$.
  By the definition of $W^{1,s} ( 0,T ; L^q(\Omega(t))^n )$, there holds that $\partial_t f \in L^s ( 0,T ; L^q(\Omega(t))^n )$.
  To prove that $\partial_t ( \Phi_\ast f )$ belongs to $L^s ( 0,T ; L^q(\Omega(0))^n )$, we use the mapping $\varLambda\colon ( \xi ,t ) \mapsto ( \Phi ( \xi  ; t ) , t )$, which belongs to $C^{3,1}_{\mathrm b} ( \Omega(0) \times (0,T) )^{n+1}$, by Corollary~\ref{cor:reg_inv_phi}, and that we may write
  \begin{equation}\label{eq:rep_Phi_ast_Lambda}
    \Phi_\ast g = ( (\nabla \Phi)^{-1} \circ \varLambda ) ( g \circ \varLambda )
  \end{equation}
  for any $g \in L^s ( 0,T ; L^q(\Omega(0))^n )$. 
  Using the product and the chain rule, we see
  \begin{equation*}
  \begin{split}
    \partial_t ( \Phi_\ast f )
    & =\partial_t \big( ( (\nabla \Phi)^{-1} \circ  \varLambda ) ( f \circ \varLambda ) \big)\\
    & =\partial_t \big( (\nabla \Phi)^{-1} \circ  \varLambda \big) ( f \circ \varLambda ) + \big( (\nabla \Phi)^{-1} \circ  \varLambda \big)  \partial_t ( f \circ \varLambda )\\
    & =\partial_t \big( (\nabla \Phi)^{-1} \circ  \varLambda \big) ( f \circ \varLambda ) + \big( (\nabla \Phi)^{-1} \circ  \varLambda \big) \big( (\nabla f \circ \varLambda ) \partial_t \Phi + \partial_t f \circ \varLambda \big)\\
    & =\partial_t \big( (\nabla \Phi)^{-1} \circ  \varLambda \big) ( f \circ \varLambda ) + \big( (\nabla \Phi)^{-1} \circ  \varLambda \big) \big( (\sum_{i=1}^n ( \partial_i f \circ \varLambda ) \partial_t \Phi_i ) + \partial_t f \circ \varLambda \big).
  \end{split}
  \end{equation*}
  
  Recalling \eqref{eq:rep_Phi_ast_Lambda}, it follows that
  \begin{equation*}
  \begin{split}
    \partial_t ( \Phi_\ast f )
     &=\partial_t \big( (\nabla \Phi)^{-1} \circ  \varLambda \big) ( \nabla \Phi \circ \varLambda ) \Phi_\ast f + \big( \sum_{i=1}^n \Phi_\ast ( \partial_i f )  \partial_t \Phi_i \big) + \Phi_\ast ( \partial_t f \big).
  \end{split}
  \end{equation*}
  Since $\Phi$ and $\varLambda$ belong to $C^{3,1}_{\mathrm b}( \overline{Q_0} )^n$ and $C^{3,1}_{\mathrm b}( \overline{Q_0} )^{n+1}$, respectively, the functions $\partial_t \big( (\nabla \Phi)^{-1} \circ  \varLambda \big)$, $\nabla \Phi \circ  \varLambda$ and $\partial_t \Phi$ are continuous and bounded on $Q_0 = \Omega(0) \times (0,T)$.
  Moreover, $\Phi_\ast f$, $\Phi_\ast( \partial_i f )$, $i=1,\dots,n$, and $\Phi_\ast( \partial_t f )$ belong to $L^s ( 0,T ; W^{1,q}(\Omega(0))^n )$.
  This implies $\partial_t ( \Phi_\ast f ) \in  L^s ( 0,T ; L^q(\Omega(0))^n )$, and thus $\Phi_\ast f  \in  W^{1,s} ( 0,T ; L^q(\Omega(0))^n )$.
  The remaining claim follows by similar arguments, as in the proof of Lemma~\ref{lem:transformation_time_dep_domains}.
\end{proof}

In the spirit of Theorem~\ref{thm:transport_theorem}, we obtain the following integration-by-parts formula for Sobolev spaces on time-dependent domains.

\begin{lemma}[Integration by parts]\label{lem:int_part_normal_velocity}
  Suppose that Assumptions~\ref{ass:reg_omega_t} hold true.
  For $r\in [1,\infty)$, let $f\in W^{1,r}( \Omega_T  )$ and $\varphi \in C_0^\infty( \Omega \times (0,T) )$.
  Then there holds
  \begin{equation}\label{eq:int_by_part_time_dep_dom}
    \int_0^T \!\int_{\Omega(t)} \partial_t f \varphi \,\mathrm{d} x \,\mathrm{d} t
    = - \int_0^T \!\int_{\Omega(t)} f \partial_t \varphi \,\mathrm{d} x  \,\mathrm{d} t
    - \int_0^T \!\int_{\partial \Omega(t)} V f \varphi \,\mathrm{d} \Hm^{n-1}(x) \,\mathrm{d} t.
  \end{equation}
\end{lemma}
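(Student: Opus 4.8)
The plan is to establish the identity first for smooth functions, where it follows directly from the transport theorem, and then to pass to general $f\in W^{1,r}(\Omega_T)$ by a density and trace argument. Throughout I will use that, by Corollary~\ref{cor:reg_inv_phi}, $\Omega_T$ is a bounded Lipschitz domain, and that Assumptions~\ref{ass:reg_omega_t} together with Proposition~\ref{prop:representation_normal velocity} guarantee that $(\partial\Omega(t))_{t\in[0,T]}$ is a family of evolving hypersurfaces in the sense of Definition~\ref{def:evolving_hypersurfaces} with normal velocity $V$, so that Theorem~\ref{thm:transport_theorem} is applicable.

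For the smooth case I would take $f$ to be the restriction to $\Omega_T$ of some $g\in C^\infty(\R^{n+1})$ and extend $\varphi$ by zero to an element of $C_0^\infty(\R^{n+1})$. Applying the first part of Theorem~\ref{thm:transport_theorem} to the $C^1$ product $g\varphi$ gives
\begin{equation*}
  \frac{\mathrm d}{\mathrm d t}\int_{\Omega(t)} g\varphi \,\mathrm{d} x
  = \int_{\Omega(t)} \partial_t(g\varphi)\,\mathrm{d} x + \int_{\partial\Omega(t)} g\varphi\, V \,\mathrm{d}\Hm^{n-1}(x).
\end{equation*}
Integrating over $(0,T)$, the left-hand side telescopes to $\int_{\Omega(T)} g\varphi(\cdot,T)\,\mathrm{d} x - \int_{\Omega(0)} g\varphi(\cdot,0)\,\mathrm{d} x$, which vanishes because $\varphi(\cdot,0)=\varphi(\cdot,T)=0$. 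Writing $\partial_t(g\varphi)=\partial_t g\,\varphi + g\,\partial_t\varphi$ and recalling that $f=g$ on $\Omega_T$ then yields \eqref{eq:int_by_part_time_dep_dom} in this case.

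To treat general $f$, I would use the extension operator for the Lipschitz domain $\Omega_T$ followed by mollification to produce a sequence $f_k\in C^\infty(\R^{n+1})$ with $f_k\to f$ in $W^{1,r}(\Omega_T)$. Step~1 applies to each $f_k$, so it remains to pass to the limit in the three terms. The two bulk integrals converge by H\"older's inequality, since $\partial_t f_k\to\partial_t f$ and $f_k\to f$ in $L^r(\Omega_T)$ while $\varphi$ and $\partial_t\varphi$ are bounded.

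The hard part will be the convergence of the moving-boundary integral, which sees $f_k$ only through its trace on the lateral space-time boundary $\Sigma=\bigcup_{t\in(0,T)}\big(\partial\Omega(t)\times\{t\}\big)$. Here I would apply the coarea formula to the projection $\pi(x,t)=t$ restricted to the $n$-rectifiable set $\Sigma$: since $\pi$ is $1$-Lipschitz, its tangential gradient obeys $|\nabla^\Sigma\pi|\le 1$, so that for any $h$
\begin{equation*}
  \int_0^T\!\int_{\partial\Omega(t)} |h| \,\mathrm{d}\Hm^{n-1}(x)\,\mathrm{d} t
  = \int_\Sigma |h|\,|\nabla^\Sigma\pi| \,\mathrm{d}\Hm^{n}
  \le \int_\Sigma |h| \,\mathrm{d}\Hm^{n}.
\end{equation*}
Taking $h=V(f_k-f)\varphi$, using that $V$ and $\varphi$ are bounded and that $\Hm^n(\Sigma)<\infty$, and invoking the continuity of the trace operator $W^{1,r}(\Omega_T)\to L^r(\partial\Omega_T)$ on the Lipschitz domain $\Omega_T$, I would bound the boundary term by $C\|f_k-f\|_{W^{1,r}(\Omega_T)}\to 0$. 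Combining the three limits gives the identity for all $f\in W^{1,r}(\Omega_T)$. The point that makes this obstacle tractable is the cost-free estimate $|\nabla^\Sigma\pi|\le 1$, which converts the time-sliced boundary integral into a genuine surface integral controlled by the trace, with no quantitative input on the geometry of $\Sigma$.
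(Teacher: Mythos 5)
Your proposal is correct and follows essentially the same route as the paper: prove the identity for smooth functions via Theorem~\ref{thm:transport_theorem} (the time integral of the total derivative vanishing because $\varphi$ is compactly supported in time), then approximate a general $f\in W^{1,r}(\Omega_T)$ by smooth functions using the Lipschitz regularity of $\Omega_T$ from Corollary~\ref{cor:reg_inv_phi} and pass to the limit, with the boundedness of $V$ and the trace operator controlling the boundary term. The paper compresses the convergence of the moving-boundary integral into the phrase ``standard properties of the trace operator''; your coarea-formula estimate $\int_0^T\int_{\partial\Omega(t)}|h|\,\mathrm{d}\Hm^{n-1}\,\mathrm{d}t\le\int_\Sigma|h|\,\mathrm{d}\Hm^n$ is precisely the detail that makes this step rigorous, so it is a welcome elaboration rather than a different method.
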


\begin{proof}
  By Corollary~\ref{cor:reg_inv_phi}, the space-time domain $\Omega_T$ has a Lipschitz boundary.
  By density of  $C^\infty( \overline{\Omega_T} ) \cap W^{1,r}(\Omega_T)$ in
  \begin{equation*}
    W^{1,r}(\Omega_T) = W^{1,r}( 0,T ; L^{r}( \Omega(t)) ) \cap L^{r}( 0,T ;  W^{1,r}( \Omega(t)) ),
  \end{equation*}
  see \cite[p.~127, Theorem~3]{evans_gariepy_mt}, there exists an approximating sequence $(f_m)_{m\in\N} \subset C^\infty( \overline{\Omega_T} ) \cap W^{1,r}(\Omega_T)$ such that $f_m \rightarrow f$ in $W^{1,r}(\Omega_T)$ as $m\rightarrow \infty$.
  Using Theorem~\ref{thm:transport_theorem}, we obtain
  \begin{equation*}
  \begin{split}
    0
    &= \int_0^T \frac{\mathrm d}{\mathrm d t} \int_{\Omega(t)} f_m \varphi \,\mathrm{d} x \,\mathrm{d} t\\
    &= \int_0^T \!\int_{\Omega(t)} \partial_t (f_m \varphi) \,\mathrm{d} x \,\mathrm{d} t + \int_0^T \!\int_{\partial \Omega(t)} V f_m \varphi \,\mathrm{d} \Hm^{n-1}(x)\,\mathrm{d} t.
  \end{split}
  \end{equation*}
  Hence
  \begin{equation*}
  \begin{split}
     &\int_0^T \!\int_{\Omega(t)} \partial_t f_m \varphi \,\mathrm{d} x \,\mathrm{d} t\\
     ={}& - \int_0^T \!\int_{\Omega(t)} f_m \partial_t \varphi \,\mathrm{d} x \,\mathrm{d} t - \int_0^T \!\int_{\partial \Omega(t)} V f_m \varphi \,\mathrm{d} \Hm^{n-1}(x)\,\mathrm{d} t.
  \end{split}
  \end{equation*}
  In view of Corollary~\ref{cor:reg_inv_phi} and Proposition~\ref{prop:representation_normal velocity}, the normal velocity $V$ is bounded.
  By standard properties of the trace operator \cite[p.~133, Theorem~3]{evans_gariepy_mt}, we obtain \eqref{eq:int_by_part_time_dep_dom} by letting $m\rightarrow \infty$ in the final equation.
\end{proof}

\section{Consistency of the Weak Formulation}\label{sec:sim:consistency}

The notion of weak solutions for the sharp-interface model incorporates the variational formulation \eqref{eq:sim:variational_formulation}, using test functions from the space $C_0^\infty( [0,T) ; C_{0,\sigma}^\infty( \Omega ) )$.
Just as in the theory of the incompressible Navier--Stokes equations, this choice removes the pressure function from the weak formulation, cf.\ \cite[Definition~V.1.1.1]{sohr}.
Thus it is not clear that the test space $C_0^\infty( [0,T) ; C_{0,\sigma}^\infty( \Omega ) )$ is appropriate.
To justify this choice, we will prove that, under additional regularity assumptions given below, it is possible to reconstruct a pressure function from the weak formulation.
To this end, we will basically proceed in two steps.
Firstly, we will reconstruct an associated pressure function in the whole space-time domain $\Omega\times(0,T)$.
Secondly, we shall readjust the associated pressure function separately in the space-time domains
\begin{equation*}
  \Omega^-= \bigcup_{t\in(0,T)} {(\Omega^-(t) \times \{ t \})} \ \text{and} \ \Omega^+= \bigcup_{t\in(0,T)} {(\Omega^+(t) \times \{ t \})}
\end{equation*}
to satisfy the dynamical Young--Laplace law \eqref{eq:sim:pressure_jump} in an appropriate trace sense.

\begin{assumptions}\label{ass:consistency}
  Assume that $\Omega \subset \R^n$ is a bounded domain with boundary $\partial \Omega$ of class $C^3$.
  Let $(\rho, v)$ be a weak solution of the free-boundary problem \eqref{eq:sim:bulk1}--\eqref{eq:sim:ic} in the sense of Definition~\ref{def:sim:weak_solution} with respect to prescribed initial data $( \rho^{(i)}, v^{(i)} ) \in BV(\Omega,\{\beta_1,\beta_2\}) \times H^1_{0,\sigma}(\Omega)$, such that the measure-theoretic representative set $\Omega^{-,(i)} = \Omega^-(0)$ of $\rho^{(i)}$ is compactly contained in $\Omega$ and has a $C^3$-boundary.
  Moreover, let the following regularity properties hold true.
\begin{enumerate}
  \item \textbf{Regularity of interface.}
    For any $t\in [0,T]$, $\Phi^-(\cdot;t)\colon \overline{\Omega^-(0)} \rightarrow \overline{\Omega^-(t)}$ is a diffeomorphism as in Assumptions~\ref{ass:reg_omega_t}, such that the time evolution of the measure-theoretic representative set $\Omega^-(t)$ of $\rho(t)$ is described by $\Phi^-(\cdot;t)$, i.e., for every $t\in [0,T]$, there holds
    \begin{equation*}
      \Omega^-(t) = \big\{ \Phi^-(\xi;t) \,:\, \xi \in \Omega^-(0) \big\}
     \ \text{and} \
      \overline{ \Omega^-(t) } = \big\{ \Phi^-(\xi;t) \,:\, \xi \in \overline{ \Omega^-(0) } \big\}.
    \end{equation*}
    Additionally,  for all $t\in [0,T]$, the interface $\Gamma(t) = \partial ( \Omega^-(t) ) \cap \Omega$ is compactly contained in $\Omega$, that is, $\Gamma(t) = \partial ( \Omega^-(t) ) \subset \subset \Omega$.
    Denote by $\nu^-= \nu^-( \cdot , t)$ the unit normal to $\Gamma(t)$ pointing outward to $\Omega^-(t)$ and by $V=V(\cdot,t)$ the normal velocity of $( \Gamma (t))_{t\in [ 0 , T ]}$ with respect to $\nu^-$. 
    Similarly, let the time evolution of $\Omega^+(t) = \Omega \setminus ( \Omega^-(t) \cup \Gamma(t) )$ be described by a diffeomorphism $\Phi^+(\cdot;t)\colon \overline{\Omega^+(0)} \rightarrow \overline{\Omega^+(t)}$ satisfying Assumptions~\ref{ass:reg_omega_t}.
  
  \item \textbf{Regularity of velocity.}
    \begin{equation*}
      \left. v \right|_{\Omega^\pm} \in L^2( 0,T ; W^{2,2}(\Omega^\pm(t))^n ) \cap W^{1,2}( 0,T ; L^{2}(\Omega^\pm(t))^n ).
    \end{equation*}
\end{enumerate}
\end{assumptions}

\subsection{The Mean-Curvature Functional for Smooth Interfaces}

Due to Assumptions~\ref{ass:consistency}, the family of interfaces $( \Gamma(t) )_{t\in [0,T]}$ has additional regularity properties.
This allows us to extend the mean-curvature function to the space-time domain $\Omega \times (0,T)$.
For the proof, we study the transformation of the trace spaces $L^2(\Gamma(t)) = L^2(\partial ( \Omega^-(t) ) )$ and $H^{\scriptstyle \frac{1}{2}}(\Gamma(t)) = H^{\scriptstyle \frac{1}{2}}(\partial ( \Omega^-(t) ) )$.

\begin{lemma}[Transformation of trace spaces]\label{lem:reg_pullback}
  Suppose that Assumptions~\ref{ass:consistency} hold true, and let $t\in [0,T]$.
  Then the pullback operator $\Phi^-_{-t}$, defined by $\Phi^-_{-t} u = u \circ \Phi^-(\cdot,t)$ for $u \in L^2( \Gamma(t) )$, induces linear homeomorphisms
  \begin{equation*}
    \Phi^-_{-t}\colon L^2(\Gamma(t)) \rightarrow L^2(\Gamma(0))
    \ \text{and} \
    \Phi^-_{-t}\colon H^{\scriptstyle \frac{1}{2}}(\Gamma(t)) \rightarrow H^{\scriptstyle \frac{1}{2}}(\Gamma(0)),
  \end{equation*}
  such that
  \begin{equation} \label{eq:est_L2_interface}
    C_1 \| u \|_{L^2(\Gamma(t))}
    \leq \| \Phi^-_{-t} u  \|_{L^2(\Gamma(0))}
    \leq C_2 \| u \|_{L^2(\Gamma(t))}
  \end{equation}
  for every $u \in L^2(\Gamma(t))$, and
  \begin{equation*} 
    C_1 \| u \|_{H^{\scriptstyle \frac{1}{2}}(\Gamma(t))}
    \leq \| \Phi^-_{-t} u  \|_{H^{\scriptstyle \frac{1}{2}}(\Gamma(0))}
    \leq C_2 \| u \|_{H^{\scriptstyle \frac{1}{2}}(\Gamma(t))}
  \end{equation*}
  for every $u \in H^{\scriptstyle \frac{1}{2}}(\Gamma(t))$ with constants $C_1, C_2 > 0$ independent of $u$ and $t$.
  In particular, there are constants $C_3, C_4 > 0$ such that
  \begin{equation}\label{eq:est_Hausdorff_measure_interface}
    C_3 \Hm^{n-1} (\Gamma(t)) )
    \leq \Hm^{n-1} (\Gamma(0)) )
    \leq C_4 \Hm^{n-1} (\Gamma(t)) ).
  \end{equation}
\end{lemma}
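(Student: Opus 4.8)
The plan is to realise $\Phi^-_{-t}$ as pullback by the restriction of the $C^3$-diffeomorphism $\Phi^-(\cdot;t)$ to the boundary $\Gamma(0)=\partial(\Omega^-(0))$, and to exploit that this restriction is uniformly bi-Lipschitz with a uniformly two-sided bounded surface Jacobian. Applying Corollary~\ref{cor:reg_inv_phi} to $\Phi^-$, both $\Phi^-$ and its inverse belong to $C^{3,1}_{\mathrm b}$ over the relevant compact time-space domains; restricting to the boundaries yields a family of $C^3$-diffeomorphisms $\Phi^-(\cdot;t)\colon\Gamma(0)\to\Gamma(t)$ whose derivatives, together with those of the inverse, are bounded uniformly in $t$. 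Consequently there are constants $0<c_1\le c_2$ and $0<L_1\le L_2$, independent of $t$, such that the tangential Jacobian $J_\Gamma(\cdot,t)$ of $\Phi^-(\cdot;t)|_{\Gamma(0)}$ satisfies $c_1\le J_\Gamma\le c_2$, and the chordal bounds $L_1|\xi-\eta|\le|\Phi^-(\xi;t)-\Phi^-(\eta;t)|\le L_2|\xi-\eta|$ hold for all $\xi,\eta\in\Gamma(0)$.

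For the $L^2$ statement I would apply the area formula to the surface change of variables $x=\Phi^-(\xi;t)$: for $u\in L^2(\Gamma(t))$,
\[
  \int_{\Gamma(t)}|u|^2\,\mathrm{d}\Hm^{n-1}(x)
  =\int_{\Gamma(0)}|\Phi^-_{-t}u|^2\,J_\Gamma(\cdot,t)\,\mathrm{d}\Hm^{n-1}(\xi).
\]
The two-sided bound on $J_\Gamma$ then gives \eqref{eq:est_L2_interface} with $C_1=c_2^{-1/2}$ and $C_2=c_1^{-1/2}$, independent of $t$; applying the identical estimate to the pullback by $(\Phi^-(\cdot;t))^{-1}$ shows that $\Phi^-_{-t}$ is a linear homeomorphism of the $L^2$-spaces.

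For the $H^{1/2}$ statement I would use the Gagliardo (Slobodeckij) characterisation, whose seminorm on a hypersurface in $\R^n$ carries the kernel $|x-y|^{-n}$. Transforming both integration variables by $\Phi^-(\cdot;t)$ rewrites $[u]_{H^{1/2}(\Gamma(t))}^2$ as a double integral over $\Gamma(0)$ of the difference quotient of $\Phi^-_{-t}u$ against the kernel $|\xi-\eta|^{-n}$, weighted by the factor $J_\Gamma(\xi,t)\,J_\Gamma(\eta,t)\,|\xi-\eta|^{n}\,|\Phi^-(\xi;t)-\Phi^-(\eta;t)|^{-n}$. Because the kernel exponent matches the ambient dimension, the bi-Lipschitz and Jacobian bounds render this weight bounded in $[L_2^{-n}c_1^2,\,L_1^{-n}c_2^2]$ uniformly in $t$, so $[u]_{H^{1/2}(\Gamma(t))}$ and $[\Phi^-_{-t}u]_{H^{1/2}(\Gamma(0))}$ are comparable with uniform constants. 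Combined with the $L^2$-equivalence this yields the asserted two-sided bound for the full $H^{1/2}$-norm, and the homeomorphism property follows again by symmetry.

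Finally, the Hausdorff-measure estimate \eqref{eq:est_Hausdorff_measure_interface} is the special case $u\equiv1$ of \eqref{eq:est_L2_interface}: since $\Phi^-_{-t}1=1$ and $\|1\|_{L^2(\Gamma(t))}^2=\Hm^{n-1}(\Gamma(t))$, the constants $C_3=C_1^2$ and $C_4=C_2^2$ work. I expect the main obstacle to be the $H^{1/2}$-seminorm transformation: one must verify carefully that it is the \emph{Euclidean} chordal distance entering the Gagliardo kernel that is controlled by the bi-Lipschitz bounds (rather than any intrinsic geodesic distance on $\Gamma$), and that the matching of the singularity exponent $n$ with the two surface Jacobians keeps every distortion factor bounded away from $0$ and $\infty$ uniformly in $t$.
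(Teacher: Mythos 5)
Your argument is correct, but it is worth knowing that the paper does not actually prove the main claims: it handles \eqref{eq:est_Hausdorff_measure_interface} exactly as you do (taking $u\equiv 1$ in \eqref{eq:est_L2_interface}) and then simply cites \cite[Section~5.4.1]{alphonse_linear_parabolic} for the $L^2$- and $H^{1/2}$-homeomorphism statements. What you have written is a self-contained replacement for that citation, and the ingredients are the right ones: the uniform-in-$t$ two-sided bounds on the tangential Jacobian and the uniform bi-Lipschitz property of $\Phi^-(\cdot;t)|_{\Gamma(0)}$ both follow from $\Phi^-,(\Phi^-)^{-1}\in C^{3,1}_{\mathrm b}$ (for the chordal upper bound one should note that $\overline{\Omega^-(0)}$ need not be convex, so the passage from the derivative bound to a global Euclidean Lipschitz bound uses that a bounded $C^3$-domain has intrinsic distance comparable to Euclidean distance; the same applied to the inverse gives the lower bound, which is the point you flagged). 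The exponent bookkeeping in the Gagliardo seminorm is also right: on an $(n-1)$-dimensional hypersurface with $s=\tfrac12$ the kernel exponent is $(n-1)+2s=n$, so the distortion weight $J_\Gamma(\xi,t)J_\Gamma(\eta,t)\,|\xi-\eta|^{n}\,|\Phi^-(\xi;t)-\Phi^-(\eta;t)|^{-n}$ is pinched between positive constants independent of $t$. The one definitional caveat is that you prove norm equivalence for the \emph{Gagliardo} $H^{1/2}$-norm, whereas the paper later realises $L^2(0,T;H^{1/2}(\Gamma(t)))$ as a trace space of $L^2(0,T;H^1(\Omega^\pm(t)))$; for fixed $t$ and a $C^3$-boundary the two norms are equivalent, but if one insists on the trace norm, the uniformity in $t$ of that equivalence requires a further (similar pullback) argument, so your choice of working intrinsically with the Gagliardo norm is the cleaner one.
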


\begin{proof}
  The estimate \eqref{eq:est_Hausdorff_measure_interface} follows from \eqref{eq:est_L2_interface} applied to the constant function $u \equiv 1$.
  The proof of the remaining claims can be found in \cite[Section~5.4.1]{alphonse_linear_parabolic}.
\end{proof}

\begin{lemma}[Mean-curvature functional]\label{lem:extension_mean_curvature}
  If Assumptions~\ref{ass:consistency} hold true, then there exists a function $m \in L^\infty( 0,T ; H^1(\Omega^-(t)) )$ with the following properties.
  \begin{enumerate}
    \item
      Let $t \in [ 0,T ]$.
      For the trace of $m(t)$ on the boundary $\Gamma(t) = \partial \Omega^-(t)$, there holds
      \begin{equation}\label{eq:trace_m_kappa}
        \left. m(t) \right|_{\Gamma(t)} = \kappa(t).
      \end{equation}
      
    \item 
    The zero extension $K$ of $\nabla m$ to $\Omega \times (0,T)$ belongs to $L^\infty( 0,T ; L^2(\Omega)^n )$ and, for every $\psi \in C_0^\infty( [0,T) ; C_{0,\sigma}^\infty( \Omega ) )$, there holds
    \begin{equation}\label{eq:regular_curvature_functional}
    \begin{split}
      \int_0^T \!\int_{\Omega} K \cdot \psi \,\mathrm{d} x \,\mathrm{d} t
        &= \int_0^T \!\int_{\Gamma(t)} \kappa \nu^- \cdot \psi \,\mathrm{d}\Hm^{n-1}(x) \,\mathrm{d} t\\
        &= \int_0^T \!\int_{\Gamma(t)} \nu^- \otimes \nu^- : \nabla\psi \,\mathrm{d}\Hm^{n-1}(x) \,\mathrm{d} t.
    \end{split}
    \end{equation} 
    
  \end{enumerate}
\end{lemma}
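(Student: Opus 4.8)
The plan is to construct $m(t)$ as a suitable bounded extension of the mean curvature $\kappa(t)$ from the interface $\Gamma(t)$ into the bulk $\Omega^-(t)$, to set $K$ equal to the zero extension of $\nabla m$, and then to recover the identity \eqref{eq:regular_curvature_functional} by integration by parts. Observe first that the second equality in \eqref{eq:regular_curvature_functional} is precisely the weak-mean-curvature identity \eqref{eq:weak_curvature} from Lemma~\ref{lem:weak_curvature}, integrated over $(0,T)$; hence only the construction of $m$, the bound on $K$, and the first equality require work.

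First I would record the regularity of $\kappa$. Under Assumptions~\ref{ass:consistency}, the set $\Omega^-(0)$ has $C^3$-boundary and $\Phi^-(\cdot;t)$ lies in $C^{3,1}_{\mathrm b}$ uniformly in $t$; consequently each $\Gamma(t)$ is a $C^3$-hypersurface, and the mean curvature $\kappa(t)$, being built from first derivatives of the normal $\nu^-(t)$, belongs to $C^1(\Gamma(t))$ with $C^1$-norm bounded independently of $t$. Pulling back by $\Phi^-_{-t}$ and invoking Lemma~\ref{lem:reg_pullback}, I obtain a bound $\sup_{t\in[0,T]} \| \Phi^-_{-t}\kappa(t) \|_{H^{\scriptstyle\frac{1}{2}}(\Gamma(0))} < \infty$ on the fixed reference interface $\Gamma(0)$.

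Next I would carry out the extension on the reference domain. Fix once and for all a bounded linear right inverse $E\colon H^{\scriptstyle\frac{1}{2}}(\Gamma(0)) \to H^1(\Omega^-(0))$ of the trace operator, and set $\widehat m(t) = E\big( \Phi^-_{-t}\kappa(t) \big)$, which is then uniformly bounded in $H^1(\Omega^-(0))$ by the previous step. Transporting back to the moving domain via $m(t) = \widehat m(t)\circ\big(\Phi^-(\cdot;t)\big)^{-1}$, the chain rule together with the time-uniform $C^{3,1}_{\mathrm b}$-bounds on $\Phi^-$ and $(\Phi^-)^{-1}$ (the scalar, volume-preserving analogue of Lemma~\ref{lem:transformation_time_dep_domains}) yields $\sup_t \|m(t)\|_{H^1(\Omega^-(t))}<\infty$, while the argument of Proposition~\ref{prop:transformation_Phi_ast} provides the measurability in $t$ needed for $m\in L^\infty(0,T;H^1(\Omega^-(t)))$. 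By construction, the trace of $m(t)$ on $\Gamma(t)$ equals $\kappa(t)$, which is \eqref{eq:trace_m_kappa}. Defining $K$ as the zero extension of $\nabla m$, one has $\|K(t)\|_{L^2(\Omega)} = \|\nabla m(t)\|_{L^2(\Omega^-(t))} \le \|m(t)\|_{H^1(\Omega^-(t))}$, so $K\in L^\infty(0,T;L^2(\Omega)^n)$.

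Finally, for a.e.\ fixed $t$ and the spatial slice $\psi(\cdot,t)\in C^\infty_{0,\sigma}(\Omega)$, since $K$ vanishes on $\Omega^+(t)$ while $m(t)\in H^1(\Omega^-(t))$ and $\Omega^-(t)$ has Lipschitz boundary, the Gauss--Green formula gives
\begin{equation*}
  \int_\Omega K\cdot\psi \,\mathrm{d} x = \int_{\Omega^-(t)} \nabla m\cdot\psi \,\mathrm{d} x = -\int_{\Omega^-(t)} m\,\dive(\psi)\,\mathrm{d} x + \int_{\Gamma(t)} m\,\nu^-\cdot\psi \,\mathrm{d}\Hm^{n-1}(x).
\end{equation*}
Using $\dive(\psi)=0$ and \eqref{eq:trace_m_kappa}, the bulk term drops and the boundary term becomes $\int_{\Gamma(t)}\kappa\,\nu^-\cdot\psi\,\mathrm{d}\Hm^{n-1}(x)$; integrating in $t$ delivers the first equality of \eqref{eq:regular_curvature_functional}, with the second supplied by Lemma~\ref{lem:weak_curvature}. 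The main obstacle is the uniform-in-$t$ control of the extension: the whole argument rests on the bounds being independent of $t$, which hinges on the time-uniform equivalence of norms under the pullback furnished by the $C^{3,1}_{\mathrm b}$-regularity of $\Phi^-$, together with the uniform $H^{\scriptstyle\frac{1}{2}}(\Gamma(t))$-bound on $\kappa$.
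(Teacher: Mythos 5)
Your proposal is correct and follows essentially the same route as the paper: pull $\kappa(t)$ back to the fixed reference interface $\Gamma(0)$, extend it to $H^1(\Omega^-(0))$ with a $t$-uniform bound, push forward by $\Phi^-(\cdot;t)$, and conclude via integration by parts using $\dive(\psi)=0$ and Lemma~\ref{lem:weak_curvature}. The only (inessential) difference is that the paper realises the extension concretely as the harmonic extension, i.e.\ the solution of the Dirichlet problem $\Delta\tilde u(t)=0$ in $\Omega^-(0)$ with boundary datum $\Phi^-_{-t}\kappa(t)$, whereas you invoke an abstract bounded right inverse of the trace operator; both yield the same uniform $H^1$-bound.
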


\begin{proof}
  Let $t\in [0,T]$.
  We apply the pullback operator $\Phi^-_{-t}\colon H^{\scriptstyle \frac{1}{2}}(\Gamma(t)) \rightarrow H^{\scriptstyle \frac{1}{2}}(\Gamma(0))$, introduced in Lemma~\ref{lem:reg_pullback}, to the mean-curvature function $\kappa(t) \in C^1(\Gamma(t)) \subset H^{\scriptstyle \frac{1}{2}}(\Gamma(t))$.
  For notational convenience, we suppress the upper index $^-$ and simply write $\Phi = \Phi^-$ and $\Phi_{-t} = \Phi^-_{-t}$ in the remainder of this proof.
  We define $\tilde \kappa (x,t) = \Phi_{-t} \kappa ( x, t) = \kappa ( \Phi(x;t) , t)$ for $x\in \Gamma(0) = \partial ( \Omega^-(0) )$.
  Since $\Omega^-(0)$ has a $C^3$-boundary, there exists a weak solution $\tilde u = \tilde u(t) \in H^1(\Omega^-(0))$ of
 \begin{equation}\label{eq:Dirichlet_mean_curvature}
  \begin{aligned}
    \Delta \tilde u (t) &= 0 &&\text{in} \ \Omega^-(0),\\
    \tilde u(t) &= \tilde \kappa(t) &&\text{on} \ \Gamma(0).
  \end{aligned}
  \end{equation}
  depending on $t$, which additionally satisfies the estimate
  \begin{equation*}
    \| \tilde u(t) \|_{H^1(\Omega^-(0))}
    \leq C( \Omega^-(0) ) \| \tilde \kappa(t) \|_{H^{\scriptstyle \frac{1}{2}}(\Gamma(0))},
  \end{equation*}
  for some constant $C( \Omega^-(0) )>0$, depending on $\Omega^-(0)$, but independent of $t$; see~\cite[Theorem~III.4.1]{boyer_fabrie}.
  By Assumptions~\ref{ass:consistency} and the foregoing Lemma~\ref{lem:reg_pullback}, we infer that for a suitable constant $C>0$, independent of $t$, there holds
  \begin{equation*}
    \| \tilde u(t) \|_{H^1(\Omega^-(0))}
    \leq C( \Omega^-(0) ) \| \kappa(t) \|_{H^{\scriptstyle \frac{1}{2}}(\Gamma(t))}
    \leq C.
  \end{equation*}
  Therefore, the function $m\colon \Omega^- \rightarrow \R$ defined by $m(x,t)  = \tilde u( \Phi^{-1}(x;t) , t)$ for $x \in \Omega^-(t)$ belongs to $L^\infty( 0,T ; H^1(\Omega^-(t)) )$ and, by construction, satisfies \eqref{eq:trace_m_kappa}.
  \par
  Concerning the second claim, we define $K\colon \Omega \times (0,T) \rightarrow \R$, for any $(x,t) \in \Omega \times (0,T)$, by
  \begin{equation*}
    K(x,t) =
    \begin{cases}
      \nabla m(x,t) &\ \text{if} \ x \in \Omega^-(t),\\
      0 &\ \text{if} \ x \in \Omega \setminus \Omega^-(t).
    \end{cases}
  \end{equation*}
  Then $K$ belongs to $L^\infty (0,T ; L^2 (\Omega)^n)$.
  For every $\psi \in C_0^\infty( [0,T) ; C_{0,\sigma}^\infty( \Omega ) )$, we then obtain
  \begin{equation*}
  \begin{split}
    \int_0^T \!\int_{\Omega} K(t) \cdot \psi(t)  \,\mathrm{d}x \,\mathrm{d} t
    ={}&\int_0^T \!\int_{\Gamma(t)} m(t) \psi(t) \cdot \nu^-(t) \,\mathrm{d}\Hm^{n-1}(x) \,\mathrm{d} t.
  \end{split}
  \end{equation*}
  Taking into account \eqref{eq:trace_m_kappa}, we conclude the first identity in \eqref{eq:regular_curvature_functional}.
  Noting that the last equality in \eqref{eq:regular_curvature_functional} follows from Lemma~\ref{lem:weak_curvature} finishes the proof.
  \end{proof}

  For our purposes, it is important to note that, if Assumptions~\ref{ass:consistency} are satisfied, then Lemma~\ref{lem:extension_mean_curvature} allows one to replace \eqref{eq:sim:variational_formulation} by
  \begin{equation}\label{eq:sim:reg_var_formulation}
  \begin{split}
      &\int_0^T \!\int_{\Omega} \rho v \cdot \partial_t\psi + \rho v \otimes v:\nabla \psi  - 2 \mu( \rho ) D v : D \psi \,\mathrm{d}x \,\mathrm{d}t
      + \int_{\Omega} \rho^{(i)} v^{(i)} \cdot \psi(0)\,\mathrm{d}x\\
     ={}&- 2 \stc \int_0^T \!\int_{\Omega} K \cdot \psi \,\mathrm{d} x \,\mathrm{d} t
  \end{split}
  \end{equation} 
  for all $\psi \in C_0^\infty( [0,T) ; C_{0,\sigma}^\infty(\Omega) )$.
  It is also convenient to introduce $\mathcal{G} \in \mathcal{D}^\prime(\Omega \times (0,T))^n$ given by
  \begin{equation}\label{eq:sim:def_G}
  \begin{split}
    \langle \mathcal{G} , \psi \rangle_{\mathcal{D}(\Omega \times (0,T))^n}
    ={}&\int_0^T \!\int_{\Omega} \rho v \cdot \partial_t\psi + \rho v \otimes v:\nabla \psi  - 2 \mu( \rho ) D v : D \psi \,\mathrm{d}x \,\mathrm{d}t\\
      &+ 2 \stc \int_0^T \!\int_{\Omega} K \cdot \psi \,\mathrm{d} x \,\mathrm{d} t
  \end{split}
  \end{equation} 
  for $\psi \in C_0^\infty( \Omega \times (0,T) )^n$.
  Note that,
for all $\psi \in C_0^\infty( (0,T) ; C_{0,\sigma}^\infty(\Omega) )$, there holds
  \begin{equation}\label{eq:sim:G_vanishes}
    \langle \mathcal{G} , \psi \rangle_{\mathcal{D}(\Omega \times (0,T))^n}
    = 0.
  \end{equation}

\subsection{Existence of an Associated Pressure Function}

We shall prove the existence of an associated pressure function, that is, a distribution $p \in \mathcal{D}^\prime(\Omega \times (0,T))$ such that
\begin{equation*}
  \nabla p
  =
  - \rho \partial_t v - \dive( \rho v \otimes v ) - 2 \mu( \rho ) \dive( Dv ) + 2\stc K
  \ \text{in} \ \mathcal{D}^\prime(\Omega \times (0,T))^n.
\end{equation*}

The theory of the incompressible Navier--Stokes equations provides us with the following key tool.

\begin{theorem}\label{thm:reconstruction_pressure_abstract}
  Let $r,s \in (1,\infty)$ and let $r^\prime = \tfrac{r}{r-1}$.
  If $\mathcal{F} \in L^s ( 0,T ; W^{-1,r}(\Omega)^n)$ satisfies
  \begin{equation*}
    \int_0^T \langle \mathcal{F}(t) , \psi(t) \rangle_{W^{1,r^\prime}_0(\Omega)^n} \,\mathrm{d} t = 0
    \ \text{for all} \ \psi \in C_0^\infty( (0,T) ; C_{0,\sigma}^\infty( \Omega ) ), 
  \end{equation*}
  then there exists a unique $p \in L^s ( 0,T ; L^r(\Omega) )$ satisfying
  $\mathcal{F} = \nabla p$ in $\mathcal{D}^\prime(\Omega \times (0,T))^n$; that is,
  \begin{equation*}
    \langle \mathcal{F} , \psi \rangle_{\mathcal{D}(\Omega \times (0,T))^n}
    = \int_0^T  \langle \nabla p(t) , \psi ( t ) \rangle_{\mathcal{D}(\Omega)^n} \,\mathrm{d} t
    = - \int_0^T \!\int_\Omega p \dive(\psi) \,\mathrm{d} x \,\mathrm{d} t
  \end{equation*}
  for all $\psi \in C_0^\infty( \Omega \times (0,T) )^n$, and, for a.e.\ $t \in (0,T)$, there holds
  \begin{equation*}
    \int_\Omega p(t) \,\mathrm{d} x = 0.
  \end{equation*}
\end{theorem}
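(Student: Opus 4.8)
The plan is to reduce the space-time statement to a family of stationary pressure-reconstruction problems, one for almost every $t\in(0,T)$, and then to control the $t$-dependence through the boundedness of the underlying reconstruction operator.

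First I would pass from the space-time orthogonality to a pointwise-in-time one. Inserting tensor-product test functions $\psi(x,t)=\theta(t)\phi(x)$ with $\theta\in C_0^\infty(0,T)$ and $\phi\in C_{0,\sigma}^\infty(\Omega)$ into the hypothesis gives $\int_0^T \theta(t)\langle\mathcal{F}(t),\phi\rangle\,\mathrm{d}t=0$ for all such $\theta$, whence $\langle\mathcal{F}(t),\phi\rangle=0$ for a.e.\ $t$, with an exceptional null set depending on $\phi$. Since $W^{1,r^\prime}_{0,\sigma}(\Omega)$ is separable, choosing a countable dense subset $\{\phi_k\}\subset C_{0,\sigma}^\infty(\Omega)$ and discarding the union of the associated null sets yields a single null set off which $\langle\mathcal{F}(t),\phi\rangle=0$ for all $\phi\in W^{1,r^\prime}_{0,\sigma}(\Omega)$; here Lemma~\ref{lem:sigma_characterisations} identifies this space with the divergence-free functions in $W^{1,r^\prime}_0(\Omega)^n$.

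The key tool is the stationary de Rham lemma in the $L^r$-setting: on a bounded domain with the required boundary regularity, any $G\in W^{-1,r}(\Omega)^n$ annihilating $W^{1,r^\prime}_{0,\sigma}(\Omega)$ is the gradient of a unique $q\in L^r(\Omega)$ with $\int_\Omega q=0$, and the reconstruction is bounded, $\|q\|_{L^r(\Omega)}\leq C\|G\|_{W^{-1,r}(\Omega)^n}$ with $C$ independent of $G$ (see, e.g., \cite{sohr}). This rests on the surjectivity of the divergence from $W^{1,r^\prime}_0(\Omega)^n$ onto the space of $L^{r^\prime}(\Omega)$-functions with vanishing mean—equivalently Ne\v{c}as' inequality, realised by a Bogovski\u{\i}-type operator—and is precisely where the regularity assumption on $\partial\Omega$ enters. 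I expect this estimate, together with the fact that the constant $C$ is uniform in $t$, to be the main point, and the place to cite carefully.

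Granting this, I would apply the stationary result to $\mathcal{F}(t)$ for a.e.\ $t$, obtaining $p(t)\in L^r(\Omega)$ with $\int_\Omega p(t)=0$, $\nabla p(t)=\mathcal{F}(t)$, and $\|p(t)\|_{L^r(\Omega)}\leq C\|\mathcal{F}(t)\|_{W^{-1,r}(\Omega)^n}$. Because the reconstruction operator is bounded and linear, $t\mapsto p(t)$ is strongly measurable, being the composition of the strongly measurable map $t\mapsto\mathcal{F}(t)$ with this operator, and the pointwise bound combined with $\mathcal{F}\in L^s(0,T;W^{-1,r}(\Omega)^n)$ gives $p\in L^s(0,T;L^r(\Omega))$. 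For $\psi\in C_0^\infty(\Omega\times(0,T))^n$ I would then unfold the space-time pairing as an iterated integral, $\langle\mathcal{F},\psi\rangle=\int_0^T\langle\mathcal{F}(t),\psi(\cdot,t)\rangle\,\mathrm{d}t=\int_0^T\langle\nabla p(t),\psi(\cdot,t)\rangle\,\mathrm{d}t=-\int_0^T\int_\Omega p\,\dive(\psi)\,\mathrm{d}x\,\mathrm{d}t$, which is the asserted identity $\mathcal{F}=\nabla p$. Uniqueness is immediate: two admissible pressures differ by some $\tilde p$ with $\nabla\tilde p(t)=0$ for a.e.\ $t$, hence constant on the connected set $\Omega$, and the zero-mean normalisation forces this constant to vanish.
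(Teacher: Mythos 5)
Your proposal is correct. The paper gives no argument of its own for this theorem --- its proof is the single line ``See \cite[Lemma~IV.1.4.1]{sohr}'' --- and what you have written is essentially the standard argument behind that citation: slicing in time with tensor-product test functions and a countable dense family of $C_{0,\sigma}^\infty(\Omega)$-functions to reduce to the stationary de~Rham/Bogovski\u{\i} statement with a $t$-uniform constant, recovering strong measurability of $t\mapsto p(t)$ from the boundedness and linearity of the reconstruction operator, and settling uniqueness via connectedness of $\Omega$ and the zero-mean normalisation.
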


\begin{proof}
  See~\cite[Lemma~IV.1.4.1]{sohr}.
\end{proof}

Although the functional $\mathcal{G}$, defined by \eqref{eq:sim:def_G}, vanishes on $C_0^\infty( (0,T) ; C_{0,\sigma}^\infty( \Omega ) )$ due to \eqref{eq:sim:G_vanishes}, as the functional
\begin{equation*}
  \psi \mapsto \int_0^T \!\int_\Omega \rho v \cdot \partial_t \psi \,\mathrm{d} x \,\mathrm{d} t
\end{equation*}
does not in general belong to any $L^s ( 0,T ; W^{-1,r}(\Omega) )$-space.
To circumvent this problem, we improve the properties of this functional by taking into account Assumptions~\ref{ass:consistency}.

\begin{proposition}\label{prop:cons:time_derivatives_part_int}
  Suppose that Assumptions~\ref{ass:consistency} are satisfied.
  Then there holds
  \begin{equation}\label{eq:integration_time_der}
  \begin{split}
    &\int_0^T \!\int_{\Omega \setminus \Gamma(t)} \rho  \partial_t v \cdot  \psi \,\mathrm{d}x \,\mathrm{d}t\\
    ={}&- \int_0^T \!\int_{\Omega}  \rho  v \cdot \partial_t \psi \,\mathrm{d}x \,\mathrm{d}t
     - (\beta_1 - \beta_2)  \int_0^T \!\int_{\Gamma(t)}  V ( v \cdot \psi ) \,\mathrm{d}\Hm^{n-1}(x) \,\mathrm{d}t   
  \end{split}
  \end{equation}
  for any $\psi \in C^\infty_0( \Omega \times (0,T) )^n$. 
\end{proposition}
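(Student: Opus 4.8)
The plan is to localise the identity to the two bulk phases and then invoke the integration-by-parts formula on time-dependent domains, Lemma~\ref{lem:int_part_normal_velocity}. Since $\rho$ equals the constant $\beta_1$ on $\Omega^-(t)$ and $\beta_2$ on $\Omega^+(t)$, and $\Gamma(t)$ is $\Hm^{n-1}$-null in $\R^n$, I would first split
\begin{equation*}
  \int_0^T \!\int_{\Omega\setminus\Gamma(t)} \rho\,\partial_t v\cdot\psi \,\mathrm{d}x\,\mathrm{d}t
  = \beta_1 \int_0^T \!\int_{\Omega^-(t)} \partial_t v\cdot\psi \,\mathrm{d}x\,\mathrm{d}t
  + \beta_2 \int_0^T \!\int_{\Omega^+(t)} \partial_t v\cdot\psi \,\mathrm{d}x\,\mathrm{d}t,
\end{equation*}
and treat each phase separately, working component by component so as to match the scalar formulation of Lemma~\ref{lem:int_part_normal_velocity}.

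Next I would verify that the regularity hypotheses of Lemma~\ref{lem:int_part_normal_velocity} are met on each phase. By Assumptions~\ref{ass:consistency} the domains $\Omega^\pm(t)$ evolve through diffeomorphisms $\Phi^\pm$ satisfying Assumptions~\ref{ass:reg_omega_t}, so the characterisation $W^{1,2}(\Omega^\pm) = W^{1,2}(0,T;L^2(\Omega^\pm(t))) \cap L^2(0,T;W^{1,2}(\Omega^\pm(t)))$ used in the proof of Lemma~\ref{lem:int_part_normal_velocity} is available. Since $\left.v\right|_{\Omega^\pm} \in W^{1,2}(0,T;L^2(\Omega^\pm(t))^n) \cap L^2(0,T;W^{2,2}(\Omega^\pm(t))^n)$, each component $\left.v_i\right|_{\Omega^\pm}$ lies in $W^{1,2}(\Omega^\pm)$, with room to spare. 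Applying Lemma~\ref{lem:int_part_normal_velocity} with $f = \left.v_i\right|_{\Omega^\pm}$ and $\varphi = \psi_i$ and summing over $i=1,\dots,n$ then produces, for the $\Omega^-$-phase, a boundary term over $\partial\Omega^-(t) = \Gamma(t)$ carrying the outward normal $\nu^-$ and its normal velocity $V$, whereas for the $\Omega^+$-phase the boundary is $\partial\Omega^+(t) = \Gamma(t) \cup \partial\Omega$.

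The bookkeeping of these boundary terms is the decisive step. On $\partial\Omega$ the test function $\psi$ vanishes, since it has compact support in $\Omega$, so this portion of $\partial\Omega^+(t)$ contributes nothing; here one also uses that the fixed boundary $\partial\Omega$ together with the moving $\Gamma(t)$ (which are disjoint, as $\Gamma(t)\subset\subset\Omega$) still constitutes a family of evolving hypersurfaces, so that Theorem~\ref{thm:transport_theorem}, and hence Lemma~\ref{lem:int_part_normal_velocity}, is legitimately applicable to $\Omega^+$. On $\Gamma(t)$ the outward normal of $\Omega^+(t)$ is $-\nu^-$, so the associated normal velocity is $-V$, which flips the sign of the $\Omega^+$-boundary term relative to the $\Omega^-$-one. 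Moreover, because $v(t) \in H^1_0(\Omega)^n$ for a.e.\ $t$ by Definition~\ref{def:sim:weak_solution}, the one-sided traces of $\left.v\right|_{\Omega^-}$ and $\left.v\right|_{\Omega^+}$ on $\Gamma(t)$ coincide (the jump $[v]$ vanishes), so the two surface integrals merge. Collecting the bulk contributions into $-\int_0^T\!\int_\Omega \rho v\cdot\partial_t\psi\,\mathrm{d}x\,\mathrm{d}t$ and the surface contributions, which carry weight $\beta_2-\beta_1$, into $-(\beta_1-\beta_2)\int_0^T\!\int_{\Gamma(t)} V(v\cdot\psi)\,\mathrm{d}\Hm^{n-1}(x)\,\mathrm{d}t$, yields exactly \eqref{eq:integration_time_der}.

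I expect the main obstacle to be the careful treatment of the $\Omega^+$-phase: one must justify that Lemma~\ref{lem:int_part_normal_velocity} applies when the moving subdomain shares the fixed outer boundary $\partial\Omega$, correctly attribute normal velocity $0$ there and $-V$ on $\Gamma(t)$, and confirm that the two-sided traces of $v$ agree so that the separate surface integrals combine with the right coefficient $\beta_1-\beta_2$. By comparison, the splitting and the regularity verification are routine.
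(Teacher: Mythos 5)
Your proposal is correct and follows essentially the same route as the paper: split the integral over the two phases, apply the integration-by-parts formula of Lemma~\ref{lem:int_part_normal_velocity} on $\Omega^\pm(t)$ (which is legitimate since Assumptions~\ref{ass:consistency} provide diffeomorphisms $\Phi^\pm$ satisfying Assumptions~\ref{ass:reg_omega_t}), use that $\psi$ vanishes near $\partial\Omega$ and that the outward normal velocity of $\Omega^+(t)$ on $\Gamma(t)$ is $-V$, and recombine via $\rho=(\beta_1-\beta_2)\chi+\beta_2$. Your additional remarks on the trace matching of $v$ across $\Gamma(t)$ and on the $\partial\Omega$ portion of $\partial\Omega^+(t)$ are details the paper leaves implicit, but they are correctly resolved.
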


\begin{proof}
  Since, by Assumptions~\ref{ass:consistency}, $v$ belongs to
  \begin{equation*}
    L^2( 0,T ; H^1(\Omega^\pm(t))^n ) \cap W^{1,2}( 0,T ; L^2(\Omega^\pm(t))^n ),
  \end{equation*}
  for any $\psi \in C^\infty_0( \Omega \times (0,T) )^n$, the integration-by-parts formula (Lemma~\ref{lem:int_part_normal_velocity}) yields
  \begin{equation*}
  \begin{split}
    &\beta_1 \int_0^T \!\int_{\Omega^-(t)} \partial_t v \cdot \psi \,\mathrm{d} x \,\mathrm{d} t
    + \beta_2 \int_0^T \!\int_{\Omega^+(t)} \partial_t v \cdot \psi \,\mathrm{d} x \,\mathrm{d} t\\
    ={}& - \beta_1 \int_0^T \!\int_{\Omega^-(t)} v \cdot \partial_t \psi \,\mathrm{d} x  \,\mathrm{d} t
    - \beta_1 \int_0^T \!\int_{\Gamma(t)} V ( v \cdot \psi ) \,\mathrm{d} \Hm^{n-1}(x) \,\mathrm{d} t\\
     &- \beta_2 \int_0^T \!\int_{\Omega^+(t)} v \cdot \partial_t \psi \,\mathrm{d} x  \,\mathrm{d} t
    + \beta_2 \int_0^T \!\int_{\Gamma(t)} V ( v \cdot \psi ) \,\mathrm{d} \Hm^{n-1}(x) \,\mathrm{d} t.
  \end{split}
  \end{equation*}
  Recalling that $\rho = ( \beta_1 - \beta_2 ) \chi + \beta_2$ finally yields the claim.
\end{proof}

\begin{remark}[Time derivatives across the interface]
  In \eqref{eq:integration_time_der}, the domain of integration is $\Omega \setminus \Gamma(t) = \Omega^-(t) \cup \Omega^+(t)$ instead of the whole domain $\Omega$, despite the fact that $\Gamma(t)$ has Lebesgue measure zero.
  This is because, by Assumptions~\ref{ass:consistency}, the restrictions of $v$ to $\Omega^\pm$ belong to some $W^{1,q}( 0,T ; L^q( \Omega^\pm(t))^n)$-space.
  However, this does not give any information about the behaviour of $\partial_t v$ on the interface $\Gamma(t)$.
  In particular, we cannot assume that $\partial_t v$ exists in the sense of weak derivatives on $\Omega \times (0,T)$. 
\end{remark}

We now prove some preparatory results, which incorporate the additional properties from Assumptions~\ref{ass:consistency}, before we reconstruct the pressure function with the help of Theorem~\ref{thm:reconstruction_pressure_abstract}.

\begin{proposition}\label{prop:cons:div_free_normal_velocity}
  If Assumptions~\ref{ass:consistency} are satisfied, then $v$ has the following properties.
  \begin{enumerate}
  \item
    $\dive(v) = 0$ in $\Omega \times (0,T)$.
  
  \item
    For every $\varphi \in C^\infty_0(\Omega \times (0,T))$, there holds
    \begin{equation}\label{eq:weak_interface_velocity}
    \begin{split}
      \int_0^T \!\int_{\Gamma(t)} V \varphi \,\mathrm{d} \Hm^{n-1}(x) \,\mathrm{d} t
      &= - \int_0^T \!\int_{\Omega} \chi \partial_t \varphi  \,\mathrm{d} x \,\mathrm{d} t\\
      &= \int_0^T \!\int_{\Gamma(t)} (v\cdot \nu^-) \varphi \,\mathrm{d} \Hm^{n-1}(x) \,\mathrm{d} t.
    \end{split}
    \end{equation}
  
  \item
    For every $\psi \in C^\infty_0(\Omega \times (0,T))^n$, there holds
    \begin{equation}\label{eq:div_free_normal_velocity_v_weak}
      \int_0^T \!\int_{\Gamma(t)} V ( v \cdot \psi ) \,\mathrm{d} \Hm^{n-1}(x) \,\mathrm{d} t
      = \int_0^T \!\int_{\Gamma(t)} ( v \cdot \psi ) (v\cdot \nu^-) \,\mathrm{d} \Hm^{n-1}(x) \,\mathrm{d} t.
    \end{equation}
  \end{enumerate}
\end{proposition}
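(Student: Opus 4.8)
The plan is to treat the three claims in order, each feeding into the next.

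For the first claim I would read the divergence-free property off the regularity of the velocity. By Definition~\ref{def:sim:weak_solution} we have $v\in L^\infty(0,T;L^2_\sigma(\Omega))$, so $v(t)\in L^2_\sigma(\Omega)$ for a.e.\ $t$. Since $L^2_\sigma(\Omega)$ is the $L^2$-closure of $C^\infty_{0,\sigma}(\Omega)$, every element is divergence-free distributionally: for $\varphi\in C^\infty_0(\Omega)$ one approximates $v(t)$ by divergence-free fields and passes to the limit in $\int_\Omega v(t)\cdot\nabla\varphi\,\mathrm dx$, obtaining $\int_\Omega v(t)\cdot\nabla\varphi\,\mathrm dx=0$. (Equivalently, as also $v\in L^2(0,T;H^1_0(\Omega)^n)$, Lemma~\ref{lem:sigma_characterisations} gives $v(t)\in H^1_{0,\sigma}(\Omega)$, hence $\dive(v(t))=0$.) Integrating in $t$ yields $\dive(v)=0$ in $\mathcal D'(\Omega\times(0,T))$.

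For the second claim I would prove the two equalities in \eqref{eq:weak_interface_velocity} separately. For the first one, Assumptions~\ref{ass:consistency} make $(\Gamma(t))_{t\in[0,T]}$ a family of evolving hypersurfaces with $\Gamma(t)=\partial\Omega^-(t)\subset\subset\Omega$, so the transport theorem (Theorem~\ref{thm:transport_theorem}, applied with $\Omega(t)=\Omega^-(t)$ and $f=\varphi$), or equivalently Lemma~\ref{lem:int_part_normal_velocity} with the constant field $f\equiv 1$, gives, for $\varphi\in C^\infty_0(\Omega\times(0,T))$,
\[
  \int_0^T\!\int_{\Gamma(t)}V\varphi\,\mathrm d\Hm^{n-1}(x)\,\mathrm dt
  = -\int_0^T\!\int_{\Omega^-(t)}\partial_t\varphi\,\mathrm dx\,\mathrm dt
  = -\int_0^T\!\int_{\Omega}\chi\,\partial_t\varphi\,\mathrm dx\,\mathrm dt,
\]
the boundary-in-time contributions vanishing because $\varphi$ has compact support in $(0,T)$ and $\chi=\chi_{\Omega^-(t)}$. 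For the second equality I would invoke that $\chi$ solves the transport equation in the sense of Definition~\ref{def:weak_sol_transport_eq} (condition~(5) of Definition~\ref{def:sim:weak_solution}): since $\varphi(0)=0$, the identity \eqref{eq:transport_var} reduces to $-\int_0^T\!\int_{\Omega}\chi\,\partial_t\varphi=\int_0^T\!\int_{\Omega^-(t)}v\cdot\nabla\varphi$. Finally, for a.e.\ $t$ the field $v(t)\in H^1(\Omega^-(t))^n$ is divergence-free by the first claim, so the Gauss theorem on the $C^3$-domain $\Omega^-(t)$ gives $\int_{\Omega^-(t)}v\cdot\nabla\varphi=\int_{\Omega^-(t)}\dive(v\varphi)=\int_{\Gamma(t)}(v\cdot\nu^-)\varphi$; integrating in $t$ closes the second equality.

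For the third claim the point is that the outer members of \eqref{eq:weak_interface_velocity} already say that $V$ and $v\cdot\nu^-$ agree when tested against scalar functions. Setting $g=V-v\cdot\nu^-$, which is square-integrable on the space-time interface since $V$ is bounded (Proposition~\ref{prop:representation_normal velocity}) and $v\cdot\nu^-$ is an $L^2$-trace of the $H^1$-field $v|_{\Omega^-}$, the second claim yields $\int_0^T\!\int_{\Gamma(t)}g\varphi\,\mathrm d\Hm^{n-1}(x)\,\mathrm dt=0$ for all $\varphi\in C^\infty_0(\Omega\times(0,T))$. Because $\Gamma(t)\subset\subset\Omega$, every smooth function with compact support in $\bigcup_{t\in(0,T)}\Gamma(t)\times\{t\}$ extends to such a $\varphi$, and these traces are dense in $L^2(0,T;L^2(\Gamma(t)))$; hence $g=0$, i.e.\ $V=v\cdot\nu^-$ on $\Gamma$. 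Multiplying this identity by $v\cdot\psi$—which also lies in $L^2(0,T;L^2(\Gamma(t)))$ by the trace estimate and the regularity $v|_{\Omega^\pm}\in L^2(0,T;H^1(\Omega^\pm(t))^n)$ from Assumptions~\ref{ass:consistency}—gives \eqref{eq:div_free_normal_velocity_v_weak}. I expect this last step to be the main obstacle: the natural test function $v\cdot\psi$ is not smooth, so it cannot simply be substituted into the scalar identity of the second claim; one must first upgrade that identity to the pointwise $L^2$-equality $V=v\cdot\nu^-$ by the density argument and then check the integrability of the products using the trace regularity of $v$.
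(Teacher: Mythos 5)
Your arguments for the first two claims coincide with the paper's: claim (1) via the characterisation of $L^2_\sigma(\Omega)$ resp.\ $H^1_{0,\sigma}(\Omega)$ (Lemma~\ref{lem:sigma_characterisations}), and claim (2) via the transport theorem for the first equality and the weak transport equation \eqref{eq:transport_var} combined with Gauss's theorem on $\Omega^-(t)$ for the second. For claim (3) you take a genuinely different route. The paper approximates $v$ by functions $v_m\in C^\infty(\overline{\Omega^-_T})$ in $H^1(\Omega^-_T)$ (possible since $\Omega^-_T$ has a Lipschitz boundary by Corollary~\ref{cor:reg_inv_phi}), inserts $v_m\cdot\psi$ as a test function in the weak transport equation, passes to the limit, and then integrates by parts in time (Lemma~\ref{lem:int_part_normal_velocity}) and in space; the scalar identity of claim (2) is never upgraded to a pointwise statement. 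You instead deduce from claim (2) the pointwise kinematic condition $V=v\cdot\nu^-$ holding $\Hm^{n-1}\otimes\mathrm{d}t$-a.e.\ on $\Gamma$, via density of the restrictions of test functions in $L^2$ of the space-time surface measure, and then multiply by the trace of $v\cdot\psi$. Your route yields a stronger and more transparent intermediate statement and makes \eqref{eq:div_free_normal_velocity_v_weak} immediate for any admissible multiplier; the one step that deserves a proof rather than an assertion is the density of $\big\{\left.\varphi\right|_{\Gamma}\,:\,\varphi\in C^\infty_0(\Omega\times(0,T))\big\}$ in $L^2(\Gamma,\mathrm{d}\Hm^{n-1}\,\mathrm{d}t)$ --- true here, e.g.\ by pulling back to the cylinder $\Gamma(0)\times(0,T)$ with the diffeomorphism $\Phi^-$ from Assumptions~\ref{ass:consistency} and Lemma~\ref{lem:reg_pullback} --- whereas the paper only invokes the already-cited density of $C^\infty(\overline{\Omega^-_T})$ in $H^1(\Omega^-_T)$. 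Both arguments also require, and you correctly supply, the trace bound $\| v(t) \|_{L^2(\Gamma(t))}\leq C\| v(t)\|_{H^1(\Omega^-(t))}$ uniformly in $t$ so that the surface integrals in \eqref{eq:div_free_normal_velocity_v_weak} are finite.
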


\begin{proof}
\begin{enumerate}
  \item
    By Definition~\ref{def:sim:weak_solution}, $v \in L^\infty( 0,T ; L_\sigma^2(\Omega) ) \cap L^2( 0,T ; H^1_0(\Omega)^n )$.
    In particular, this means that $v \in L^2( 0,T ; H^1_{0,\sigma}(\Omega)^n )$.
    Finally, Lemma~\ref{lem:sigma_characterisations} implies the first claim.

  \item
    Let $\varphi \in C^\infty_0(\Omega \times (0,T))$.
    The first equality in \eqref{eq:weak_interface_velocity} follows from the first statement of Theorem~\ref{thm:transport_theorem}.
    For the proof of the second equality in \eqref{eq:weak_interface_velocity}, we use that $\chi$ is a weak solution of the transport equation \eqref{eq:sim:transport_equation}.
    Thus, by \eqref{eq:transport_var}, we have
    \begin{equation*}
      - \int_0^T \!\int_{\Omega} \chi  \partial_t \varphi \,\mathrm{d} x \,\mathrm{d} t
      = \int_0^T \!\int_{\Omega} \chi v \cdot \nabla \varphi \,\mathrm{d} x \,\mathrm{d} t
      = \int_0^T \!\int_{\Omega^-(t)} v \cdot \nabla \varphi \,\mathrm{d} x \,\mathrm{d} t.
    \end{equation*}
    Using integration by parts and $\dive(v) = 0$ in $\Omega \times (0,T)$, it follows 
    \begin{equation*}
    \begin{split}
      &- \int_0^T \!\int_{\Omega} \chi  \partial_t \varphi \,\mathrm{d} x \,\mathrm{d} t\\
      ={}&  - \int_0^T \!\int_{\Omega^-(t)} \dive(v) \varphi  \,\mathrm{d} x \,\mathrm{d} t
        + \int_0^T \!\int_{\Gamma(t)} (v\cdot \nu^-) \varphi \,\mathrm{d} \Hm^{n-1}(x) \,\mathrm{d} t\\
      ={}& \int_0^T \!\int_{\Gamma(t)} (v\cdot \nu^-) \varphi \,\mathrm{d} \Hm^{n-1}(x) \,\mathrm{d} t.
    \end{split}
    \end{equation*}
    This proves \eqref{eq:weak_interface_velocity}.
  \item
    Due to Assumptions~\ref{ass:consistency}, there holds
    \begin{equation*}
      v \in W^{1,2}( 0,T ; L^{2}( \Omega^-(t))^n ) \cap L^{2}( 0,T ;  H^1( \Omega^-(t)) )^n ) = H^1(\Omega^-_T)^n.
    \end{equation*}
    By Corollary~\ref{cor:reg_inv_phi}, $\Omega^-_T$ has a Lipschitz boundary, and therefore $C^\infty\left(\overline{\Omega^-_T} \right)\cap H^1(\Omega^-_T)$ is dense in $H^1(\Omega^-_T)$; see \cite[p.~127, Theorem~3]{evans_gariepy_mt}.
    This means that there exists an approximating sequence $(v_m)_{m\in\N} \subset C^\infty\left(\overline{\Omega^-_T}\right)^n\cap H^1(\Omega^-_T)^n$ such that $v_m \rightarrow v$ in $H^1(\Omega^-_T)^n$ as $m\rightarrow \infty$.
    For any $\psi \in C^\infty_0(\Omega \times (0,T))^n$, for $m\rightarrow \infty$, we obtain
    \begin{equation}\label{eq:conv_time_der}
      \chi \partial_t ( v_m \cdot \psi ) \rightarrow \chi \partial_t ( v \cdot \psi ) \ \text{in} \ L^2(\Omega^-_T)
    \end{equation}
    and
    \begin{equation}\label{eq:conv_gradient}
      \chi \nabla ( v_m \cdot \psi ) \rightarrow \chi \nabla ( v \cdot \psi ) \ \text{in} \ L^2(\Omega^-_T)^n
    \end{equation}
    since $\chi \in L^\infty( \Omega \times (0,T))$ and $v \in L^2( 0,T; L^2( \Omega )^n )$.
    As $\chi$ is a weak solution of the transport equation, by \eqref{eq:transport_var}, we infer that
    \begin{equation*}
      \int_0^T \!\int_{\Omega} \chi \partial_t ( v_m \cdot \psi ) \,\mathrm{d} x \,\mathrm{d} t
      = - \int_0^T \!\int_{\Omega} \chi v \cdot \nabla ( v_m \cdot \psi )  \,\mathrm{d} x \,\mathrm{d} t.
    \end{equation*}
    Now \eqref{eq:conv_time_der} and \eqref{eq:conv_gradient} allow us to pass to the limit $m\rightarrow \infty$.
    This yields
    \begin{equation*}
    \begin{split}
     \int_0^T \!\int_{\Omega^-(t)} \partial_t ( v \cdot \psi ) \,\mathrm{d} x \,\mathrm{d} t
      &=\int_0^T \!\int_{\Omega} \chi \partial_t ( v \cdot \psi ) \,\mathrm{d} x \,\mathrm{d} t\\
      &= - \int_0^T \!\int_{\Omega} \chi v \cdot \nabla ( v \cdot \psi )  \,\mathrm{d} x \,\mathrm{d} t.
    \end{split}
    \end{equation*}
    As the integration-by-parts formula (see Lemma~\ref{lem:int_part_normal_velocity}) applies to the left-hand side and as $\dive(v) = 0$ in $\Omega^-$, we obtain
    \begin{equation*}
    \begin{split}
      \int_0^T \!\int_{\Gamma(t)} V ( v \cdot \psi ) \,\mathrm{d} \Hm^{n-1}(x) \,\mathrm{d} t
      &= \int_0^T \!\int_{\Omega^-(t)} \dive ( ( v \cdot \psi )  v ) \,\mathrm{d} x \,\mathrm{d} t\\
      &= \int_0^T \!\int_{\Gamma(t)} ( v \cdot \psi ) (v\cdot \nu^-) \,\mathrm{d} \Hm^{n-1}(x) \,\mathrm{d} t.
    \end{split}
    \end{equation*}  
    This justifies \eqref{eq:div_free_normal_velocity_v_weak}, which completes the proof.
\end{enumerate}
\end{proof}

Next, we explore the regularity of the convective term $(v \cdot \nabla) v$.

\begin{lemma}[Regularity of convective term]\label{lem:reg_convective_term}
  Let Assumptions~\ref{ass:consistency} be satisfied.
  Then $(v \cdot \nabla) v$ belongs to $L^2(0,T ; L^3(\Omega^\pm(t))^n )$.
\end{lemma}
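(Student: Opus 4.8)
The plan is to combine the extra phase-wise regularity granted by Assumptions~\ref{ass:consistency} with Sobolev embeddings and Hölder's inequality, the decisive point being that the dimension satisfies $n\leq 3$ and that the parabolic regularity of $v$ upgrades to a \emph{time-uniform} $H^1$-bound on each phase. First I would show that
\begin{equation*}
  \left.v\right|_{\Omega^\pm}\in L^\infty\big(0,T;H^1(\Omega^\pm(t))^n\big),
\end{equation*}
so that $\esssup_{t\in(0,T)}\|v(t)\|_{H^1(\Omega^\pm(t))^n}<\infty$. By Assumptions~\ref{ass:consistency} the restriction $\left.v\right|_{\Omega^\pm}$ lies in $L^2(0,T;W^{2,2}(\Omega^\pm(t))^n)\cap W^{1,2}(0,T;L^2(\Omega^\pm(t))^n)$. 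Pulling back along $\Phi^\pm$ to the fixed reference domain $\Omega^\pm(0)$ — which is admissible by Corollary~\ref{cor:reg_inv_phi} and transfers the spaces with $t$-independent constants, by the arguments of Lemma~\ref{lem:transformation_time_dep_domains} and Proposition~\ref{prop:transformation_Phi_ast} applied at the second-order level — the transformed velocity belongs to $W^{1,2}(0,T;L^2(\Omega^\pm(0))^n)\cap L^2(0,T;H^2(\Omega^\pm(0))^n)$. On the fixed $C^3$-domain this maximal-regularity space embeds continuously into $C([0,T];H^1(\Omega^\pm(0))^n)$, hence into $L^\infty(0,T;H^1(\Omega^\pm(0))^n)$, by the classical trace/interpolation theorem for intersections of vector-valued Sobolev spaces (using $(L^2,H^2)_{1/2,2}=H^1$). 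Transforming back with the uniform bounds yields the claimed $L^\infty$-in-time $H^1$-control.

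With this bound available, I would estimate the convective term pointwise in $t$ by Hölder's inequality with $\tfrac16+\tfrac16=\tfrac13$,
\begin{equation*}
  \|(v\cdot\nabla)v(t)\|_{L^3(\Omega^\pm(t))}
  \leq \|v(t)\|_{L^6(\Omega^\pm(t))}\,\|\nabla v(t)\|_{L^6(\Omega^\pm(t))}.
\end{equation*}
Since $n\leq 3$, the embeddings $H^1(\Omega^\pm(t))\hookrightarrow L^6(\Omega^\pm(t))$ and $W^{2,2}(\Omega^\pm(t))\hookrightarrow W^{1,6}(\Omega^\pm(t))$ hold with constants uniform in $t$ (again transferred to $\Omega^\pm(0)$ via $\Phi^\pm$). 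Thus $\|v(t)\|_{L^6(\Omega^\pm(t))}\leq C\|v(t)\|_{H^1(\Omega^\pm(t))}$ is bounded uniformly in $t$ by the first step, whereas $\|\nabla v(t)\|_{L^6(\Omega^\pm(t))}\leq C\|v(t)\|_{W^{2,2}(\Omega^\pm(t))}$ is square-integrable in time.

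Combining these gives $\|(v\cdot\nabla)v(t)\|_{L^3(\Omega^\pm(t))}\leq C\,\|v(t)\|_{W^{2,2}(\Omega^\pm(t))}$, with $C$ proportional to the finite quantity $\esssup_{t}\|v(t)\|_{H^1(\Omega^\pm(t))}$; squaring and integrating in time then yields $(v\cdot\nabla)v\in L^2(0,T;L^3(\Omega^\pm(t))^n)$. I expect the first step to be the main obstacle: the naive interpolations that use only the global bounds $v\in L^\infty(0,T;L^2_\sigma(\Omega))$ and $v\in L^2(0,T;W^{2,2}(\Omega^\pm(t)))$ fall short of $L^2$-integrability in time for an $L^3$-valued quadratic term, so the combined $W^{1,2}$-in-time $L^2$ regularity is essential, and it must be exploited precisely through the maximal-regularity embedding into $L^\infty(0,T;H^1(\Omega^\pm(t)))$, transferred to the moving domains with uniform constants by the machinery of Section~\ref{sec:sobolev_time_dep}. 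Once that time-uniform $H^1$-control is in hand, the remaining Hölder and Sobolev estimates are routine.
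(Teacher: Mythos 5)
Your proposal is correct and follows essentially the same route as the paper: pull back to the fixed reference domain via $\Phi_\ast$, use the maximal-regularity embedding of $W^{1,2}(0,T;L^2)\cap L^2(0,T;H^2)$ into $C^0([0,T];H^1)$ (the paper obtains this by applying the Lions--Magenes/Amann embedding $L^2(0,T;H^1)\cap W^{1,2}(0,T;W^{-1,2})\hookrightarrow C^0([0,T];L^2)$ to both $w$ and $\nabla w_i$, which is the same interpolation you invoke), and then conclude by H\"older with $\tfrac16+\tfrac16=\tfrac13$ together with $H^1\hookrightarrow L^6$ in dimension $n\le 3$. Your identification of the time-uniform $H^1$-control as the decisive step matches the paper's argument exactly.
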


\begin{proof}
  Let $\Phi_\ast$ be given by \eqref{eq:def_Phi_ast}.
  In view of Proposition~\ref{prop:transformation_Phi_ast}, there holds that
  \begin{equation}\label{eq:def_w}
    w
    = \Phi_\ast v
    \in L^2( 0,T ; W^{2,2}(\Omega^\pm(0))^n) \cap W^{1,2}( 0,T ; L^{2}(\Omega^\pm(0))^n),
  \end{equation}
  and it is sufficient to verify that $w \cdot \nabla w_i \in L^2(0,T ; L^3(\Omega^\pm(0)) )$ for $i=1,\dots,n$.
  To this end, we will use the continuous embedding
  \begin{equation}\label{eq:embedding_cont_pm}
  \begin{split}
    L^2&( 0,T ; H^1(\Omega^\pm(0)) ) \cap W^{1,2}( 0,T ; W^{-1,2}(\Omega^\pm(0)) )\\
    &\hookrightarrow C^0 ( [0,T]; L^2 ( \Omega^\pm(0) ) );
  \end{split}
  \end{equation}
  see \cite[Chapter~III, Theorem~4.10.2]{amann} and \cite[Th\'eor\`eme~12.4]{lions_magenes}.
  As \eqref{eq:def_w} implies that $w, \nabla w_i \in L^2( 0,T ; H^1(\Omega^\pm(0))^n) \cap W^{1,2}( 0,T ; W^{-1,2}(\Omega^\pm(0))^n)$, taking into account the embedding \eqref{eq:embedding_cont_pm}, we conclude that
  \begin{equation*}
    w
    \in C^0 ( [0,T]; H^1 ( \Omega^\pm(0) )^n )
    \hookrightarrow L^\infty( 0,T ; L^6(\Omega^\pm(0))^n).
  \end{equation*}
  Then, by H\"older's inequality, we obtain
  \begin{equation*}
    \| w \cdot \nabla w_i \|_{L^2( 0,T ; L^3( \Omega^\pm(0) ) )}
    \leq \| w \|_{L^\infty( 0,T ; L^6( \Omega^\pm(0) )^n )} \| \nabla w_i \|_{L^2 ( 0,T ; L^6( \Omega^\pm(0) )^n )},
  \end{equation*}
  which completes the proof.
\end{proof}

Using Proposition~\ref{prop:cons:time_derivatives_part_int}, we improve the regularity of the functional $\mathcal{G}$; see \eqref{eq:sim:def_G}.

\begin{proposition}\label{prop:reg_G_reg}
  Suppose that Assumptions~\ref{ass:consistency} hold true and let $\mathcal{G}$ be as in \eqref{eq:sim:def_G}.
  For $\psi \in C^\infty_0( \Omega \times (0,T) )^n$, define $\mathcal{G}_{\mathrm{reg}}\in \mathcal{D}^\prime(\Omega \times (0,T))^n$ by
  \begin{equation}\label{eq:def_G_reg}
  \begin{split}
    &\langle \mathcal{G}_{\mathrm{reg}} , \psi \rangle_{\mathcal{D}(\Omega \times (0,T))^n}\\
    ={}&- \int_0^T \!\int_{\Omega \setminus \Gamma(t)} \rho \partial_t v \cdot \psi \,\mathrm{d}x \,\mathrm{d}t - \int_0^T \!\int_{\Omega} \rho v \cdot \nabla v \cdot \psi \,\mathrm{d}x \,\mathrm{d}t\\
    &- 2 \int_0^T \!\int_{\Omega}\mu( \rho ) D v : D \psi \,\mathrm{d}x \,\mathrm{d}t
    + 2 \stc \int_0^T \!\int_{\Omega} K \cdot \psi \,\mathrm{d} x \,\mathrm{d} t.
  \end{split}
  \end{equation}
  Then $\mathcal{G}_{\mathrm{reg}}$ extends to a functional belonging to $L^2( 0,T ; H^{-1}( \Omega )^n )$.
  Moreover, there holds $\langle \mathcal{G} , \psi \rangle_{\mathcal{D}(\Omega \times (0,T))^n} = \langle \mathcal{G}_{\mathrm{reg}} , \psi \rangle_{\mathcal{D}(\Omega \times (0,T))^n}$ for all $\psi \in C^\infty_0( \Omega \times (0,T) )^n$.
\end{proposition}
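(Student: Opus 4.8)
The plan is to split the statement into two independent tasks: first, to verify the distributional identity $\langle \mathcal{G} , \psi \rangle = \langle \mathcal{G}_{\mathrm{reg}} , \psi \rangle$ for every $\psi \in C_0^\infty(\Omega \times (0,T))^n$, and second, to show that the right-hand side of \eqref{eq:def_G_reg} defines a functional that extends continuously to $L^2( 0,T ; H^{-1}(\Omega)^n ) = (L^2( 0,T ; H^1_0(\Omega)^n ))^\ast$. Since the two expressions \eqref{eq:sim:def_G} and \eqref{eq:def_G_reg} already share the viscous term $-2\int \mu(\rho) Dv : D\psi$ and the curvature term $2\stc \int K\cdot\psi$ verbatim, only the time-derivative term and the convective term have to be reconciled.

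First I would treat the time-derivative term. As $\psi \in C_0^\infty(\Omega \times (0,T))^n$ is admissible in Proposition~\ref{prop:cons:time_derivatives_part_int}, the identity \eqref{eq:integration_time_der} converts $\int_0^T\!\int_\Omega \rho v \cdot \partial_t \psi$ into $-\int_0^T\!\int_{\Omega \setminus \Gamma(t)} \rho \partial_t v \cdot \psi$ plus the surface integral $-(\beta_1-\beta_2)\int_0^T\!\int_{\Gamma(t)} V(v\cdot\psi)$. For the convective term I would integrate by parts separately on $\Omega^-(t)$ and $\Omega^+(t)$: using the divergence theorem (valid since each $\Omega^\pm(t)$ has Lipschitz boundary) together with $\dive(v \otimes v) = (v\cdot\nabla)v$, which holds because $\dive v = 0$ in $\Omega\times(0,T)$ by Proposition~\ref{prop:cons:div_free_normal_velocity}(1), the volume integral $\int_\Omega \rho\, v \otimes v : \nabla \psi$ becomes $-\int_\Omega \rho\,(v\cdot\nabla)v\cdot\psi$ plus the surface integral $(\beta_1-\beta_2)\int_0^T\!\int_{\Gamma(t)}(v\cdot\nu^-)(v\cdot\psi)$; here the opposite orientations of $\nu^-$ on the two sides produce the factor $\beta_1-\beta_2$, while the compact support of $\psi$ inside $\Omega$ kills any contribution from $\partial\Omega$. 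The two surface integrals then cancel exactly by equation \eqref{eq:div_free_normal_velocity_v_weak} of Proposition~\ref{prop:cons:div_free_normal_velocity}(3), which identifies $\int_0^T\!\int_{\Gamma(t)} V(v\cdot\psi)$ with $\int_0^T\!\int_{\Gamma(t)} (v\cdot\psi)(v\cdot\nu^-)$. This cancellation leaves precisely the four terms of \eqref{eq:def_G_reg}.

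For the regularity claim I would estimate each of the four terms of $\mathcal{G}_{\mathrm{reg}}$ as a functional on $L^2( 0,T ; H^1_0(\Omega)^n )$, using that $C_0^\infty(\Omega\times(0,T))^n$ is dense therein. By Assumptions~\ref{ass:consistency}, $\partial_t v \in L^2( 0,T ; L^2(\Omega^\pm(t))^n )$ and $\rho \in L^\infty$, so the zero-interface extension of $\rho\,\partial_t v$ lies in $L^2( 0,T ; L^2(\Omega)^n )$; likewise $\mu(\rho) Dv \in L^2( 0,T ; L^2(\Omega)^{n\times n} )$ because $v \in L^2( 0,T ; H^1_0(\Omega)^n )$, and $K \in L^\infty( 0,T ; L^2(\Omega)^n )$ by Lemma~\ref{lem:extension_mean_curvature}. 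Since $L^2(\Omega) \hookrightarrow H^{-1}(\Omega)$, these three terms are controlled in $L^2( 0,T ; H^{-1}(\Omega)^n )$. For the convective term, Lemma~\ref{lem:reg_convective_term} gives $(v\cdot\nabla)v \in L^2( 0,T ; L^3(\Omega^\pm(t))^n )$, hence $\rho\,(v\cdot\nabla)v \in L^2( 0,T ; L^3(\Omega)^n )$, and the Sobolev embedding $H^1_0(\Omega)\hookrightarrow L^6(\Omega)$ for $n\le 3$ yields, by duality, $L^3(\Omega)\hookrightarrow L^{6/5}(\Omega)\hookrightarrow H^{-1}(\Omega)$, so this term too is bounded in $L^2( 0,T ; H^{-1}(\Omega)^n )$.

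I expect the most delicate point to be the bookkeeping of the two interface integrals: making the piecewise integration by parts for the convective term rigorous (for which the $H^2$-regularity of $v$ on each phase from Assumptions~\ref{ass:consistency} secures well-defined traces on $\Gamma(t)$) and confirming that the two surface terms annihilate each other through \eqref{eq:div_free_normal_velocity_v_weak}. This cancellation is exactly where the weak transport equation for $\chi$ enters, via Proposition~\ref{prop:cons:div_free_normal_velocity}; it is the step that renders the definition \eqref{eq:def_G_reg} free of interface contributions and thus amenable to the pressure-reconstruction result (Theorem~\ref{thm:reconstruction_pressure_abstract}). The dimensional restriction $n\le 3$ is needed only to secure the embedding $L^3(\Omega)\hookrightarrow H^{-1}(\Omega)$ used for the convective term.
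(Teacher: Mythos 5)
Your proposal is correct and follows essentially the same route as the paper: the identity $\langle\mathcal{G},\psi\rangle=\langle\mathcal{G}_{\mathrm{reg}},\psi\rangle$ is obtained exactly as in the paper's proof by combining Proposition~\ref{prop:cons:time_derivatives_part_int} for the time-derivative term with a piecewise integration by parts of the convective term on $\Omega^\pm(t)$, the two interface integrals cancelling via \eqref{eq:div_free_normal_velocity_v_weak}. Your term-by-term regularity estimate (including the embedding argument for the convective term via Lemma~\ref{lem:reg_convective_term}) merely spells out what the paper asserts in one sentence, so there is nothing further to add.
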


\begin{proof}
  In view of Assumptions~\ref{ass:consistency}, Lemma~\ref{lem:reg_convective_term}, Definition~\ref{def:sim:weak_solution} and Lemma~\ref{lem:extension_mean_curvature}, $\mathcal{G}_{\mathrm{reg}}$ extends to a functional belonging to the class $L^2( 0,T ; H^{-1}( \Omega )^n )$.
  \par
  Let $\psi \in C^\infty_0( \Omega \times (0,T) )^n$.
  It suffices to show that
  \begin{equation}\label{eq:time_dev_reg}
  \begin{split}
    &\int_0^T \!\int_{\Omega}  \rho  ( v \cdot \partial_t \psi + v \otimes v : \nabla \psi ) \,\mathrm{d}x \,\mathrm{d}t\\
    ={}& - \int_0^T \!\int_{\Omega \setminus \Gamma(t)} \rho  \partial_t v \cdot  \psi \,\mathrm{d}x \,\mathrm{d}t
    - \int_0^T \!\int_{\Omega} \rho \dive( v \otimes v ) \cdot \psi \,\mathrm{d}x \,\mathrm{d}t.
  \end{split}
  \end{equation}
  To this end, we integrate by parts on $\Omega^\pm(t)$, and use Proposition~\ref{prop:cons:div_free_normal_velocity}, to see that
  \begin{equation*}
  \begin{split}
    &\int_0^T \!\int_{\Omega} \rho \dive \left( (v\otimes v)  \psi \right) \,\mathrm{d}x \,\mathrm{d}t\\
    ={}& ( \beta_1 - \beta_2) \int_0^T \!\int_{\Gamma(t)}  (v\cdot \nu^-) (v \cdot \psi) \,\mathrm{d}\Hm^{n-1}(x) \,\mathrm{d}t\\
    ={}&(\beta_1 - \beta_2) \int_0^T \!\int_{\Gamma(t)}  V ( v \cdot \psi ) \,\mathrm{d}\Hm^{n-1}(x) \,\mathrm{d}t.
  \end{split}
  \end{equation*}
%
  Finally, applying Proposition~\ref{prop:cons:time_derivatives_part_int} implies \eqref{eq:time_dev_reg}.
\end{proof}

Taking into account the additional smoothness Assumptions~\ref{ass:consistency}, we can prove the existence of an associated pressure function.
 
\begin{theorem}[Reconstruction of associated pressure]\label{thm:reconstruction_pressure}
  Let Assumptions~\ref{ass:consistency} be satisfied.
  Then there exists some function $p \in L^2( 0,T ; L^2 ( \Omega ) )$ such that its restrictions $p^\pm = \left. p \right|_{\Omega^\pm}$ to $\Omega^\pm$ belong to $L^2( 0,T ; H^1(\Omega^\pm(t)))$ and satisfy
  \begin{equation}\label{eq:nabla_p_minus}
  \begin{split}
    \nabla p^- &= - \beta_1 \partial_t v + \mu ( \beta_1 ) \Delta v  - \beta_1 (v \cdot \nabla) v - 2 \stc K \ \text{a.e.\ in} \ \Omega^-,\\
    \nabla p^+ &= - \beta_2 \partial_t v + \mu ( \beta_2 ) \Delta v  - \beta_2 (v \cdot \nabla) v - 2 \stc K \ \text{a.e.\ in} \ \Omega^+,
  \end{split}
  \end{equation}
  where $K \in L^2( 0,T ; L^2(\Omega)^n )$ denotes the extension of the mean-curvature function as in Lemma~\ref{lem:extension_mean_curvature}. 
\end{theorem}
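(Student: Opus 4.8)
The plan is to apply the abstract reconstruction result, Theorem~\ref{thm:reconstruction_pressure_abstract}, to the regularised functional $\mathcal{G}_{\mathrm{reg}}$ from Proposition~\ref{prop:reg_G_reg}, and then to localise the resulting distributional identity to each bulk phase. First I would invoke Proposition~\ref{prop:reg_G_reg}, which guarantees that $\mathcal{G}_{\mathrm{reg}}$ extends to an element of $L^2(0,T;H^{-1}(\Omega)^n) = L^2(0,T;W^{-1,2}(\Omega)^n)$ and agrees with $\mathcal{G}$ on $C_0^\infty(\Omega\times(0,T))^n$. Since $\mathcal{G}$ vanishes on solenoidal test functions by \eqref{eq:sim:G_vanishes}, the same is true of $\mathcal{G}_{\mathrm{reg}}$. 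Hence Theorem~\ref{thm:reconstruction_pressure_abstract}, applied with $s = r = r^\prime = 2$, furnishes a unique $p \in L^2(0,T;L^2(\Omega))$ of vanishing spatial mean with $\nabla p = \mathcal{G}_{\mathrm{reg}} = \mathcal{G}$ in $\mathcal{D}^\prime(\Omega\times(0,T))^n$.

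It remains to identify $\nabla p$ on each phase and to upgrade the regularity of $p^\pm$. Here I would test with $\psi \in C_0^\infty(\Omega\times(0,T))^n$ whose support is compactly contained in the open space-time set $\Omega^-$. On such $\psi$ the defining expression \eqref{eq:def_G_reg} simplifies, because $\rho\equiv\beta_1$ and $\mu(\rho)\equiv\mu(\beta_1)$ there and every integral reduces to one over $\Omega^-$, with no surface contribution since $\psi$ vanishes in a neighbourhood of $\Gamma(t)$. Using the second-order spatial regularity $v|_{\Omega^-}\in L^2(0,T;W^{2,2}(\Omega^-(t))^n)$ from Assumptions~\ref{ass:consistency} together with $\dive(v)=0$, I would integrate the viscous term by parts, turning $-2\mu(\beta_1)\int_0^T\!\int_{\Omega^-(t)} Dv:D\psi$ into $\mu(\beta_1)\int_0^T\!\int_{\Omega^-(t)} \Delta v\cdot\psi$ (no boundary terms arise, as $\psi$ is compactly supported in $\Omega^-$). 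Comparing the resulting expression with $\langle\nabla p,\psi\rangle = -\int_0^T\!\int_\Omega p\,\dive(\psi)$ then yields, as an identity of distributions on $\Omega^-$, precisely the first relation in \eqref{eq:nabla_p_minus}; testing with functions supported in $\Omega^+$ gives the second one, with $\beta_2$ in place of $\beta_1$.

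Finally I would verify that the right-hand sides of \eqref{eq:nabla_p_minus} belong to $L^2(0,T;L^2(\Omega^\pm(t))^n)$: the terms $\partial_t v$ and $\Delta v$ are controlled by Assumptions~\ref{ass:consistency}, the convective term $(v\cdot\nabla)v$ by Lemma~\ref{lem:reg_convective_term} (with the embedding $L^3(\Omega^\pm(t))\hookrightarrow L^2(\Omega^\pm(t))$ on the bounded phases), and $K$ by Lemma~\ref{lem:extension_mean_curvature}. Since $p^\pm\in L^2(0,T;L^2(\Omega^\pm(t)))$ as restrictions of $p$, and their spatial weak gradients have just been identified as $L^2(0,T;L^2(\Omega^\pm(t))^n)$ functions, a Fubini (slicing) argument over $t$ shows $p^\pm(t)\in H^1(\Omega^\pm(t))$ for a.e.\ $t$ with $L^2$-integrable norm, i.e.\ $p^\pm\in L^2(0,T;H^1(\Omega^\pm(t)))$, and \eqref{eq:nabla_p_minus} holds almost everywhere. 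I expect the main obstacle to be this localisation-and-integration-by-parts step on the moving phases: it is where the second-order spatial regularity of $v$ is indispensable and where one must carefully pass from the space-time distributional identity to the pointwise-in-time statement on the time-dependent domains; the bulk of the analytic work, however, has already been discharged into Proposition~\ref{prop:reg_G_reg} and Lemma~\ref{lem:reg_convective_term}.
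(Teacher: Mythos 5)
Your proposal is correct and follows essentially the same route as the paper: apply Theorem~\ref{thm:reconstruction_pressure_abstract} (with $s=r=2$) to $\mathcal{G}_{\mathrm{reg}}$ from Proposition~\ref{prop:reg_G_reg}, then localise by testing with $\psi$ compactly supported in the space-time sets $\Omega^\pm$, integrating the viscous term by parts using the $W^{2,2}$ regularity of $v$ and $\dive(v)=0$ to identify $\nabla p^\pm$ and deduce $p^\pm\in L^2(0,T;H^1(\Omega^\pm(t)))$. The only cosmetic difference is that you spell out the $L^3\hookrightarrow L^2$ step for the convective term and the Fubini slicing, which the paper leaves implicit.
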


\begin{proof}
  In view of Proposition~\ref{prop:reg_G_reg} and \eqref{eq:sim:G_vanishes}, for any $\psi \in C^\infty_0( (0,T) ; C^\infty_{0,\sigma}(\Omega) )$, there holds
  $
    \langle \mathcal{G}_{\mathrm{reg}} , \psi \rangle_{\mathcal{D}(\Omega \times (0,T))^n}
    = \langle \mathcal{G} , \psi \rangle_{\mathcal{D}(\Omega \times (0,T))^n}
    = 0
  $. 
  Since $\mathcal{G}_{\mathrm{reg}}$ belongs to $L^2( 0,T ; H^{-1}( \Omega )^n )$, by Theorem~\ref{thm:reconstruction_pressure_abstract}, there exists a function $p \in L^2( 0,T ; L^2 ( \Omega ) )$ such that, for the distributional gradient $\nabla p$, there holds
  \begin{equation*}
    \langle \nabla p , \psi \rangle_{\mathcal{D}(\Omega \times (0,T))^n}
    = \langle \mathcal{G}_{\mathrm{reg}} , \psi \rangle_{\mathcal{D}(\Omega \times (0,T))^n}
    \ \text{for all} \ \psi \in C^\infty_0( \Omega \times (0,T) )^n.
  \end{equation*}
For any $\psi \in C^\infty_0( \Omega^- )^n$, since $\rho = \beta_1$ in $\Omega^-$ and $v\in L^2( 0,T ; W^{2,2}(\Omega^-(t))^n)$, this leads to
  \begin{equation*}
  \begin{split}
    &\langle \nabla p , \psi \rangle_{\mathcal{D}(\Omega \times (0,T))^n}\\
    ={}&- \int_0^T \!\int_{\Omega^-(t)} \rho ( \partial_t v +  \dive( v \otimes v ) ) \cdot \psi \,\mathrm{d}x \,\mathrm{d}t\\
    &- 2 \int_0^T \!\int_{\Omega^-(t)}\mu( \rho ) D v : D \psi \,\mathrm{d}x \,\mathrm{d}t
    + 2 \stc \int_0^T \!\int_{\Omega^-(t)} K \cdot \psi \,\mathrm{d} x \,\mathrm{d} t\\
    ={}&- \int_0^T \!\int_{\Omega^-(t)} \beta_1 ( \partial_t v +  \dive( v \otimes v ) ) \cdot \psi \,\mathrm{d}x \,\mathrm{d}t\\
    &+ 2 \int_0^T \!\int_{\Omega^-(t)}\mu(\beta_1) \dive( Dv )  \cdot \psi  \,\mathrm{d}x \,\mathrm{d}t
    + 2 \stc \int_0^T \!\int_{\Omega^-(t)} K \cdot \psi \,\mathrm{d} x \,\mathrm{d} t.
  \end{split}
  \end{equation*}
  As $\partial_t v \in L^2( 0,T ; L^2( \Omega^-(t) )^n )$, due to Assumptions~\ref{ass:consistency}, this implies that $p^-$ belongs to $L^2( 0,T ; H^1(\Omega^-(t)))$.
  Additionally, since $v$ is divergence free, we conclude the first identity in \eqref{eq:nabla_p_minus}.
  As the statements about $p^+$ follow analogously, this finishes the proof.
\end{proof}

\subsection{The Pressure Jump}\label{ssec:pressure_jump}

The question left to answer is whether there are pressure functions $p^\pm$ such that the Young--Laplace law \eqref{eq:sim:pressure_jump} holds true.
That is, whether
\begin{equation}\label{eq:yl}
  \left[ p \right] \nu^-
  = ( p^+ - p^- ) \nu^-
  = 2 \big( \mu(\beta_2) (Dv)^+ - \mu(\beta_1) (Dv)^- \big)\nu^- + 2 \stc \kappa \nu^-
\end{equation}
is satisfied on the interface $\Gamma(t)$.
A first step towards an affirmative answer to this question is to understand the "jump brackets" $[ \,\cdot\, ]$ in an appropriate sense: in Theorem~\ref{thm:reconstruction_pressure} we reconstructed a pressure function $p$ such that, for its restrictions $p^\pm$ to $\Omega^\pm$, there holds
\begin{equation*}
  p^+ = \left. p \right|_{\Omega^+} \in L^2( 0,T ; H^1(\Omega^+(t)) )
  \ \text{and} \
  p^- = \left. p \right|_{\Omega^-} \in L^2( 0,T ; H^1(\Omega^-(t)) ).
\end{equation*}
In particular, for a.e.\ $t\in(0,T)$, the traces $\left. p^\pm(t) \right|_{\Gamma(t)}$ are well-defined in the Sobolev sense, and there holds
\begin{equation*}
  \left. p^\pm(t) \right|_{\Gamma(t)} \in H^{\scriptstyle \frac{1}{2}}(\Gamma(t)).
\end{equation*}

Therefore, the statement of Theorem~\ref{thm:reconstruction_pressure} suggests that the pressure jump $\left[ p \right]$ on the space-time interface
\begin{equation}\label{eq:def_spacetime_interface}
  \Gamma = \bigcup_{t\in (0,T)} \big( \Gamma(t) \times \{t\} \big)
\end{equation}
belongs to the space $L^2( 0,T ; H^{\scriptstyle \frac{1}{2}}(\Gamma(t)))$ which is given by either of the equivalent definitions
\begin{equation*}
  L^2( 0,T ; H^{\scriptstyle \frac{1}{2}}(\Gamma(t)))
  = \left\{ \left. u \right|_{\Gamma} \,:\, u \in L^2(0,T; H^1(\Omega^-(t))) \right\}
\end{equation*}
and
\begin{equation*}
  L^2( 0,T ; H^{\scriptstyle \frac{1}{2}}(\Gamma(t)))
  = \left\{ \left. u \right|_{\Gamma} \,:\, u \in L^2(0,T; H^1(\Omega^+(t))) \right\}.
\end{equation*}
Likewise, we introduce the $n$-dimensional version of the latter space and define
\begin{equation*}
  L^2( 0,T ; H^{\scriptstyle \frac{1}{2}}(\Gamma(t))^n) 
  = L^2( 0,T ; H^{\scriptstyle \frac{1}{2}}(\Gamma(t)))^n.
\end{equation*}

%
To give the jump condition~\eqref{eq:yl} a meaning, in the remaining part of this chapter, we will interpret the "jump brackets" $[\,\cdot\,]$ in the sense of Sobolev traces without changing the notation.
More precisely, for a function $f \in L^2( 0,T ; L^2(\Omega) )$ such that the restrictions $f^\pm = \left. f \right|_{\Omega^\pm}$ to $\Omega^\pm$ belong to $L^2( 0,T ; H^1(\Omega^\pm(t)))$, we denote
\begin{equation*}
  \left[ f(t) \right]
  = \left. f^+(t) \right|_{\partial \Omega^+(t) \cap \Omega} - \left. f^-(t) \right|_{\partial \Omega^-(t)},
\end{equation*}
where $\left. f^+(t) \right|_{\partial \Omega^+(t) \cap \Omega}$ and $\left. f^-(t) \right|_{\partial \Omega^-(t)}$ denote the traces of $f^\pm(t)$ on the interface $\Gamma(t)= \partial \Omega^+(t) \cap \Omega = \partial \Omega^-(t)$ taken with respect to the domains $\Omega^+(t)$ and $\Omega^-(t)$, respectively.
Analogously, for a function $f \in L^2( 0,T ; L^2(\Omega)^n )$ such that the restrictions $f^\pm = \left. f \right|_{\Omega^\pm}$ to $\Omega^\pm$ belong to 
$L^2( 0,T ; H^1(\Omega^\pm(t))^n)$, we denote $\left[ f(t) \right] = ( \left[ f_i(t) \right] )_{i=1,\dots,n}$.
To construct a pressure function respecting the Young--Laplace law, we provide the following two technical lemmas.

\begin{lemma}\label{lem:divergence_free_boundary}
  Let $D \subset \Omega$ be a bounded subdomain of $\Omega$ with Lipschitz boundary $\partial D$ and outer normal $\nu_D$.
  Then, for $a \in H^{\scriptstyle \frac{1}{2}}(\partial D )^n$, there holds $\int_{\partial D} a \cdot \nu_D \,\mathrm{d}\Hm^{n-1}(x) = 0$ if and only if there exists a function $u\in H^1(D)^n$ such that
  \begin{equation}\label{eq:div_free_boundary_values} 
    \dive(u) = 0 \ \text{in} \ D
    \ \text{and} \ 
    u = a  \ \text{on} \  \partial D.
  \end{equation}
\end{lemma}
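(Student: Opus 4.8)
The plan is to prove the two implications separately, the content lying almost entirely in the reverse direction, which amounts to constructing a solenoidal $H^1$-extension of the boundary datum $a$.

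For the easy direction, suppose such a $u \in H^1(D)^n$ exists. Since $D$ has Lipschitz boundary, the Gauss--Green formula is valid for $H^1$-vector fields, whence
\begin{equation*}
  \int_{\partial D} a \cdot \nu_D \,\mathrm{d}\Hm^{n-1}(x)
  = \int_{\partial D} u \cdot \nu_D \,\mathrm{d}\Hm^{n-1}(x)
  = \int_D \dive(u) \,\mathrm{d}x
  = 0,
\end{equation*}
using $u = a$ in the trace sense and $\dive(u) = 0$ in $D$. This establishes necessity of the compatibility condition.

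For the main direction, assume $\int_{\partial D} a \cdot \nu_D \,\mathrm{d}\Hm^{n-1}(x) = 0$. First I would lift the boundary datum: as $D$ has Lipschitz boundary, the trace operator $H^1(D)^n \to H^{\scriptstyle \frac{1}{2}}(\partial D)^n$ admits a bounded right inverse, so there is some $w \in H^1(D)^n$ with $w = a$ on $\partial D$; see \cite{boyer_fabrie}. In general $\dive(w) \neq 0$, but $\dive(w) \in L^2(D)$, and by Gauss--Green together with the assumed compatibility condition there holds
\begin{equation*}
  \int_D \dive(w) \,\mathrm{d}x
  = \int_{\partial D} w \cdot \nu_D \,\mathrm{d}\Hm^{n-1}(x)
  = \int_{\partial D} a \cdot \nu_D \,\mathrm{d}\Hm^{n-1}(x)
  = 0.
\end{equation*}
Hence $\dive(w)$ has vanishing mean on $D$. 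I would then correct $w$ by solving the divergence equation: there exists $z \in H^1_0(D)^n$ with $\dive(z) = \dive(w)$ in $D$. Setting $u = w - z$ yields $\dive(u) = \dive(w) - \dive(z) = 0$ in $D$, while $z \in H^1_0(D)^n$ has vanishing trace, so that $u = w = a$ on $\partial D$, as required.

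The main obstacle is precisely the solvability of the divergence equation with homogeneous boundary values, that is, the surjectivity of $\dive$ from $H^1_0(D)^n$ onto the space $\{ f \in L^2(D) \,:\, \int_D f \,\mathrm{d}x = 0 \}$. This is the classical result on the Bogovski\u\i{} operator on bounded Lipschitz domains, which furnishes a bounded linear right inverse and in particular the desired $z$ with $\| z \|_{H^1(D)^n} \leq C \| \dive(w) \|_{L^2(D)}$; see \cite{boyer_fabrie} or \cite{sohr}. Once this tool is available, the remaining steps reduce to routine applications of the trace theorem and the Gauss--Green formula on Lipschitz domains.
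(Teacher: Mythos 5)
Your proof is correct and matches the paper's approach: the forward implication is the same Gauss--Green computation, and for the converse the paper simply cites \cite[Theorem~IV.1.1]{galdi}, whose standard proof is exactly the argument you spell out (trace lifting followed by a Bogovski\u\i-type correction in $H^1_0(D)^n$ using the vanishing mean of $\dive(w)$). No gaps.
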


\begin{proof}
  Let $u\in H^1(D)^n$ satisfy \eqref{eq:div_free_boundary_values}.
  Then there holds
   \begin{equation*}
    \int_{\partial D} a \cdot \nu_D \,\mathrm{d}\Hm^{n-1}(x) = \int_{\partial D} u \cdot \nu_D \,\mathrm{d}\Hm^{n-1}(x) = \int_D \dive( u ) \,\mathrm{d} x = 0. 
  \end{equation*}
  The opposite direction follows by~\cite[Theorem IV.1.1]{galdi}.
\end{proof}

The following variant of the fundamental lemma of the calculus of variations allows one to deal with divergence-free test functions.

\begin{lemma}\label{lem:fund_lem_divfree}
  Let Assumptions~\ref{ass:consistency} hold true.
  If $b \in L^2( 0,T ; H^{\scriptstyle \frac{1}{2}}(\Gamma(t))^n)$ satisfies
  \begin{equation*}
    \int_0^T \!\int_{\Gamma(t)} b(t) \cdot \psi(t) \,\mathrm{d}\Hm^{n-1}(x) \,\mathrm{d} t = 0
  \end{equation*}
  for all $\psi \in C^\infty_0( (0,T) ; C^\infty_{0,\sigma}(\Omega))$, then the tangential projection $t \mapsto \mathcal{P}_\tau(b(t)) = b(t) - (b(t) \cdot \nu^-(t)) \nu^-(t)$ vanishes on $\Gamma(t)$, i.e., for a.e.\ $t\in (0,T)$, there holds $\mathcal{P}_\tau(b(t)) = 0$ on $\Gamma(t)$.
  Moreover, the normal projection $t \mapsto \mathcal{P}_{\nu^-}(b(t)) =  (b(t) \cdot \nu^-(t)) \nu^-(t)$ belongs to $L^2( 0,T ; H^{\scriptstyle \frac{1}{2}}(\Gamma(t))^n)$. Moreover, for a.e.\ $t\in (0,T)$, there holds
    \begin{equation}\label{eq:repr_normal_projection}
      \mathcal{P}_{\nu^-}(b(t))  = C ( t ) \nu^-(t)
    \end{equation}
    on $\Gamma(t)$, where the function $t \mapsto  C(t)$ belongs to $L^2(0,T)$ and is given by
    \begin{equation}\label{eq:def_C}
      C(t) = \tfrac{1}{\Hm^{n-1}(\Gamma(t))} \left( \int_{\Gamma(t)} b(t) \cdot \nu^-(t) \,\mathrm{d}\Hm^{n-1}(x) \right).
    \end{equation}
\end{lemma}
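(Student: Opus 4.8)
The plan is to reduce the space–time identity to a pointwise-in-time statement, then to characterise precisely which boundary fields on $\Gamma(t)$ arise as traces of globally divergence-free test functions, and finally to extract the pointwise structure of $b(t)$ by a short orthogonality computation. The only earlier results needed are Lemma~\ref{lem:divergence_free_boundary}, Lemma~\ref{lem:sigma_characterisations} and the uniform two-sided bounds on $\Hm^{n-1}(\Gamma(t))$ furnished by \eqref{eq:est_Hausdorff_measure_interface}.

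First I would insert the product test functions $\psi(x,t) = \eta(t)\,\phi(x)$ with $\eta \in C_0^\infty((0,T))$ and $\phi \in C_{0,\sigma}^\infty(\Omega)$, which are admissible since $\dive_x(\eta\phi) = \eta\,\dive(\phi) = 0$. The hypothesis then reads $\int_0^T \eta(t)\,g_\phi(t)\,\mathrm{d}t = 0$ for $g_\phi(t) = \int_{\Gamma(t)} b(t)\cdot\phi\,\mathrm{d}\Hm^{n-1}(x)$, where $g_\phi \in L^2(0,T)$ because $b \in L^2(0,T;L^2(\Gamma(t))^n)$, $\phi$ is bounded and $\Hm^{n-1}(\Gamma(t))$ is uniformly bounded by \eqref{eq:est_Hausdorff_measure_interface}. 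The fundamental lemma in $t$ yields $g_\phi = 0$ a.e.; choosing a countable set $\{\phi_k\}\subset C_{0,\sigma}^\infty(\Omega)$ dense in the separable space $H^1_{0,\sigma}(\Omega)$ and taking the union of the associated null sets produces a single full-measure set of times $t$ for which $\int_{\Gamma(t)}b(t)\cdot\phi_k\,\mathrm{d}\Hm^{n-1}(x) = 0$ for every $k$. For such $t$ the functional $\phi \mapsto \int_{\Gamma(t)} b(t)\cdot\phi\,\mathrm{d}\Hm^{n-1}(x)$ is continuous on $H^1_0(\Omega)^n$ (by $b(t)\in L^2(\Gamma(t))^n$ and the trace theorem), so by density it extends to $\int_{\Gamma(t)} b(t)\cdot\phi\,\mathrm{d}\Hm^{n-1}(x) = 0$ for every $\phi \in H^1_{0,\sigma}(\Omega)$.

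The heart of the argument—and the step I expect to be the main obstacle—is to show that every $a \in H^{\scriptstyle \frac{1}{2}}(\Gamma(t))^n$ with $\int_{\Gamma(t)} a\cdot\nu^-\,\mathrm{d}\Hm^{n-1}(x) = 0$ is the trace of some $\phi \in H^1_{0,\sigma}(\Omega)$. Applying Lemma~\ref{lem:divergence_free_boundary} on $D = \Omega^-(t)$ produces $u^- \in H^1(\Omega^-(t))^n$ with $\dive(u^-) = 0$ and $u^- = a$ on $\Gamma(t)$. For $D = \Omega^+(t)$, whose boundary is $\Gamma(t)\cup\partial\Omega$ because $\Omega^-(t)\subset\subset\Omega$, the prescribed datum ``$a$ on $\Gamma(t)$ and $0$ on $\partial\Omega$'' has total flux $-\int_{\Gamma(t)}a\cdot\nu^-\,\mathrm{d}\Hm^{n-1}(x) = 0$, so Lemma~\ref{lem:divergence_free_boundary} again furnishes a divergence-free $u^+$ with these boundary values. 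Gluing $u^-$ and $u^+$ across $\Gamma(t)$ gives a field $\phi$ with matching traces, hence $\phi \in H^1(\Omega)^n$; it vanishes on $\partial\Omega$ and its classical divergence vanishes off the null set $\Gamma(t)$, so $\dive(\phi) = 0$ in $\Omega$ and, by Lemma~\ref{lem:sigma_characterisations}, $\phi \in H^1_{0,\sigma}(\Omega)$. Combined with the previous paragraph this yields $\int_{\Gamma(t)} b(t)\cdot a\,\mathrm{d}\Hm^{n-1}(x) = 0$ for every such zero-flux $a$.

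Finally I would define $C(t)$ by \eqref{eq:def_C} and set $a = b(t) - C(t)\nu^-(t) \in H^{\scriptstyle \frac{1}{2}}(\Gamma(t))^n$; a direct computation gives $\int_{\Gamma(t)} a\cdot\nu^-\,\mathrm{d}\Hm^{n-1}(x) = 0$, so $a$ is admissible in the previous step. Then
\[
  \| a \|_{L^2(\Gamma(t))}^2 = \int_{\Gamma(t)} a\cdot b(t)\,\mathrm{d}\Hm^{n-1}(x) - C(t)\int_{\Gamma(t)} a\cdot\nu^-\,\mathrm{d}\Hm^{n-1}(x) = 0,
\]
the first integral vanishing by the trace identity and the second by the flux condition. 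Hence $b(t) = C(t)\nu^-(t)$ a.e.\ on $\Gamma(t)$, which immediately gives $\mathcal{P}_\tau(b(t)) = 0$ and $\mathcal{P}_{\nu^-}(b(t)) = b(t) = C(t)\nu^-(t)$, that is, \eqref{eq:repr_normal_projection}. Consequently $\mathcal{P}_{\nu^-}(b) = b \in L^2(0,T;H^{\scriptstyle \frac{1}{2}}(\Gamma(t))^n)$ by the hypothesis on $b$, while $C\in L^2(0,T)$ follows from \eqref{eq:def_C} via Cauchy--Schwarz, $|C(t)| \le \Hm^{n-1}(\Gamma(t))^{-\frac12}\,\|b(t)\|_{L^2(\Gamma(t))}$, together with the uniform lower bound on $\Hm^{n-1}(\Gamma(t))$ from \eqref{eq:est_Hausdorff_measure_interface} and $b \in L^2(0,T;L^2(\Gamma(t))^n)$.
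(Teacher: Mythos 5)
Your proposal follows essentially the same route as the paper: reduce to a fixed time $t$, use Lemma~\ref{lem:divergence_free_boundary} on $\Omega^-(t)$ and on $\Omega\setminus\overline{\Omega^-(t)}$ to realise every zero-flux $a\in H^{\scriptstyle\frac12}(\Gamma(t))^n$ as the trace of a glued field in $H^1_{0,\sigma}(\Omega)$, and then conclude by an orthogonality computation. Two of your choices are actually slight improvements in presentation: the countable dense family $\{\phi_k\}$ makes the ``single null set of times'' reduction explicit, which the paper passes over, and testing directly with $a=b(t)-C(t)\nu^-(t)$ collapses the paper's two separate steps (first killing $\mathcal{P}_\tau(b(t))$ with the test function $\mathcal{P}_\tau(b(t))$, then killing the normal fluctuation) into one identity $\|a\|_{L^2(\Gamma(t))}^2=0$ that yields both conclusions at once.

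The one genuine gap is the claim that $C\in L^2(0,T)$. Your Cauchy--Schwarz estimate $|C(t)|\le \Hm^{n-1}(\Gamma(t))^{-\frac12}\|b(t)\|_{L^2(\Gamma(t))}$, combined with \eqref{eq:est_Hausdorff_measure_interface}, only bounds $C$ pointwise by a square-integrable quantity; it does not establish that $t\mapsto C(t)$ is \emph{measurable}, and a pointwise bound by an $L^2$ function does not imply measurability. This is not automatic here, because the spaces $L^2(0,T;L^2(\Gamma(t)))$ and $L^2(0,T;H^{\scriptstyle\frac12}(\Gamma(t)))$ are defined via traces on the moving interface, so measurability of $t\mapsto\int_{\Gamma(t)}b(t)\cdot\nu^-(t)\,\mathrm{d}\Hm^{n-1}(x)$ has to be proved. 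The paper devotes an entire step to this: it pulls $b(t)$ back to the fixed surface $\Gamma(0)$ via $\Phi^-_{-t}$ (Lemma~\ref{lem:reg_pullback}), obtains a Bochner-measurable function $B\in L^2(0,T;L^2(\Gamma(0))^n)$, approximates it by simple functions, and represents $C(t)$ as the value of a uniformly bounded family of linear functionals $\mathcal I(t,\cdot)$ applied to $B(t)$, from which measurability and then the $L^2$ bound follow. You should supply an argument of this kind (or an equivalent one) before invoking $C\in L^2(0,T)$.
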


\begin{proof}
  We split the proof of the lemma into several steps.
  \setcounter{countproofstep}{0}%
  \proofstep
    By assumption, for a.e.\ $t \in (0,T)$, the fundamental lemma of the calculus of variations implies
    \begin{equation*}
      \int_{\Gamma(t)} b(t) \cdot \psi \,\mathrm{d}\Hm^{n-1}(x) = 0
    \end{equation*}
    for all $\psi \in C_{0,\sigma}^\infty(\Omega)$.
    Passing on to $H^1_{0,\sigma} (\Omega)$, for any $u \in H^1_{0,\sigma} (\Omega)$, there holds
    \begin{equation}\label{eq:b_on_H1}
      \int_{\Gamma(t)} b(t) \cdot u \,\mathrm{d}\Hm^{n-1}(x) = 0.
    \end{equation}
    Let $a \in H^{\scriptstyle \frac{1}{2}}( \Gamma(t) )^n$ be such that $\int_{\Gamma(t)} a \cdot \nu^-(t) \,\mathrm{d}\Hm^{n-1}(x) = 0$.
    Applying Lemma~\ref{lem:divergence_free_boundary} on $\Omega^-(t)$ and $\Omega \setminus \overline{ \Omega^-(t) }$, respectively, there exist functions $u_1 \in H^1(\Omega^-(t))^n$ and $u_2 \in H^1( \Omega \setminus \overline{ \Omega^-(t) })^n$ such that
    $\dive( u_1 ) = 0$ in $\Omega^-(t)$, $\dive( u_2 ) = 0$ in $\Omega \setminus \overline{ \Omega^-(t) }$, $u_1 = u_2 = a \ \text{on} \ \Gamma(t)$ and $u_2 = 0$ on $\partial \Omega$.
    As $H^1_{0,\sigma} (\Omega) = \{ u \in H^1_0(\Omega)^n \,:\, \dive(u) = 0 \}$, see Lemma~\ref{lem:sigma_characterisations}, the composite function
    \begin{equation*}
      u =
      \begin{cases}
        u_1 \ &\text{in} \ \Omega^-(t),\\
        u_2 \ &\text{in} \ \Omega \setminus \overline{ \Omega^-(t) }
      \end{cases}
    \end{equation*}
    is an admissible test function in \eqref{eq:b_on_H1}, which finally yields
    \begin{equation}\label{eq:int_b_a_zero}
      \int_{\Gamma(t)} b(t) \cdot a \,\mathrm{d}\Hm^{n-1}(x)
      = \int_{\Gamma(t)} b(t) \cdot u \,\mathrm{d}\Hm^{n-1}(x)
      = 0.
    \end{equation}

 \proofstep
   For a.e.\ $t\in (0,T)$, there holds $b(t) \in H^{\scriptstyle \frac{1}{2}}( \Gamma(t) )^n$ and $\nu^-(t) \in C^1(\Gamma(t))^n$, by assumption.
%
%
%
Hence, the function $\mathcal{P}_\tau(b(t))$ belongs to $H^{\scriptstyle \frac{1}{2}}( \Gamma(t) )^n$ and satisfies
    \begin{equation*}
      \mathcal{P}_\tau(b(t)) \cdot \nu^-(t)
      = (b(t) - (b(t) \cdot \nu^-(t)) \nu^-(t)) \cdot \nu^-(t)
      = 0.
    \end{equation*}
    In particular,
$\mathcal{P}_\tau(b(t))$ is an admissible choice in \eqref{eq:int_b_a_zero}, which implies
    \begin{equation*}
      \int_{\Gamma(t)} \left| \mathcal{P}_\tau(b(t)) \right|^2 \,\mathrm{d}\Hm^{n-1}(x)
      = \int_{\Gamma(t)} b(t) \cdot \mathcal{P}_\tau(b(t)) \,\mathrm{d}\Hm^{n-1}(x)
      = 0,
    \end{equation*}
    since $\left| \mathcal{P}_\tau(b) \right|^2 = b \cdot \mathcal{P}_\tau(b)$.
    This proves the first claim.
  
 \proofstep
    From $\mathcal{P}_{\tau}(b) = 0$, we infer that $\mathcal{P}_{\nu^-}(b) = b \in L^2( 0,T ; H^{\scriptstyle \frac{1}{2}} ( \Gamma(t))^n )$.
    For the proof of \eqref{eq:repr_normal_projection}, we consider $a(t) \in H^{\scriptstyle \frac{1}{2}}( \Gamma(t) )^n$ given by
    \begin{equation*}
    \begin{split}
      a(t)
      &= \mathcal{P}_{\nu^-}(b(t)) - \frac{1}{\Hm^{n-1}(\Gamma(t))} \left( \int_{\Gamma(t)} b(t) \cdot \nu^-(t) \,\mathrm{d}\Hm^{n-1}(x) \right) \nu^-(t)\\
      &= \left( b(t) \cdot \nu^-(t)  - \tfrac{1}{\Hm^{n-1}(\Gamma(t))}  \int_{\Gamma(t)} b(t)  \cdot \nu^-(t)  \,\mathrm{d}\Hm^{n-1}(x) \right) \nu^-(t).
    \end{split}
    \end{equation*}
    Since, by the definition of $a(t)$, $\int_{\Gamma(t)} a(t)  \cdot \nu^-(t)  \,\mathrm{d}\Hm^{n-1}(x)$ vanishes, $a(t)$ is an admissible function in \eqref{eq:int_b_a_zero}.
    Thus we get
    \begin{equation*}
      \int_{\Gamma(t)} b(t) \cdot a(t) \,\mathrm{d}\Hm^{n-1}(x) = 0
    \end{equation*}
    and, by the definition of $a(t)$, we conclude that
    \begin{equation*}
      \int_{\Gamma(t)} (b(t) \cdot \nu^-(t))^2  \,\mathrm{d}\Hm^{n-1}(x) 
      = \tfrac{1}{\Hm^{n-1}(\Gamma(t))}  \left( \int_{\Gamma(t)} b(t) \cdot \nu^-(t) \,\mathrm{d}\Hm^{n-1}(x) \right)^2
    \end{equation*}
    and hence, we have
%
    \begin{equation*}
    \begin{split}
      &\int_{\Gamma(t)} \left| a(t) \right|^2 \,\mathrm{d}\Hm^{n-1}(x)\\
      ={}&\int_{\Gamma(t)} ( b(t) \cdot \nu^-(t))^2  \,\mathrm{d}\Hm^{n-1}(x) - \tfrac{1}{\Hm^{n-1}(\Gamma(t))}  \left ( \int_{\Gamma(t)} b(t) \cdot \nu^-(t) \,\mathrm{d}\Hm^{n-1}(x) \right)^2\\
      ={}& 0.
    \end{split}
    \end{equation*}
    Hence $a(t)$ vanishes $\Hm^{n-1}$-a.e.\ on $\Gamma(t)$.

 \proofstep
      We shall prove that the function $t \mapsto C(t)$, given by \eqref{eq:def_C}, is measurable.
      To this end, we use the pullback operator $\Phi^-_{-t}\colon L^2( \Gamma ( t ) ) \rightarrow  L^2( \Gamma ( 0 ) )$ introduced in Lemma~\ref{lem:reg_pullback} and define
      $B_i ( t )= \Phi^-_{-t} \circ b_i ( t ) \in L^2( \Gamma ( 0 ) )^n$ for $i=1,2, \dots, n$ and $t\in [0,T]$.
      Then, in view of Lemma~\ref{lem:reg_pullback}, the function $B = (B_1, B_2, \dots, B_n)$ belongs to $L^2( 0,T ; L^2( \Gamma ( 0 ) )^n )$.
      In particular, $B$ is Bochner measurable.
      This means that there exists a sequence $( B_m )_{m\in \N}$ of simple functions $B_m\colon [0,T] \rightarrow L^2( \Gamma ( 0 ) )^n $ such that, for a.e.\ $t\in [0,T]$, there holds $B_m( t ) \rightarrow B( t )$ in $L^2( \Gamma(0) )^n$ as $m\rightarrow \infty$.
      For $t\in [0,T]$ and $A \in L^2( \Gamma ( 0 ) )^n$, define
      \begin{equation*}
        \mathcal I (t,A)
        = \tfrac{1}{\Hm^{n-1}(\Gamma(t))} \left( \int_{\Gamma(t)} ( \Phi^-_t A ) \cdot \nu^-(t) \,\mathrm{d}\Hm^{n-1}(x) \right),
      \end{equation*}
      where $\Phi^-_t$ denotes the inverse of $\Phi^-_{-t}$.
      Using Lemma~\ref{lem:reg_pullback} again, we conclude that, for any $t\in [0,T]$, $\mathcal I (t, \cdot )\colon L^2( \Gamma ( 0 ) )^n \rightarrow \R$ is a linear functional that, for any $A \in L^2( \Gamma ( 0 ) )^n$, satisfies
      \begin{equation*} 
      \begin{split}
       \Hm^{n-1}(\Gamma(t)) \left| \mathcal I (t,A) \right|
        &= \left| \int_{\Gamma(t)} ( \Phi^-_t A ) \cdot \nu^-(t) \,\mathrm{d}\Hm^{n-1}(x) \right|\\
        &\leq \|  \Phi^-_t A \|_{L^2( \Gamma ( t ) )^n} \| \nu^-(t) \|_{L^2( \Gamma ( t ) )^n}
        \end{split} 
      \end{equation*}
      and, consequently,
      \begin{equation}\label{eq:est_func_I} 
        \left| \mathcal I (t,A) \right|
        = {\Hm^{n-1}(\Gamma(t))^{\scriptstyle - \frac{1}{2}}} \|  \Phi^-_t A \|_{L^2( \Gamma ( t ) )^n}
        \leq D \| A \|_{L^2( \Gamma ( 0 ) )^n}
      \end{equation}
      for a constant $D>0$ independent of $t\in [0,T]$.
      Hence $\mathcal I (t) = \mathcal I (t, \cdot )$ defines an element of $\big( L^2( \Gamma ( 0 ) )^n \big)^\ast$, where the constant of continuity does not depend on $t\in [0,T]$.
      Then, $C_m \colon [0,T] \rightarrow \R$, defined by $C_m ( t ) = \mathcal I ( t, B_m ( t ) )$ for $t\in [0,T]$, is a simple function for every $m\in \N$.
      Moreover, for a.e.\ $t\in [0,T]$, we infer that $C_m ( t ) = \mathcal I ( t, B_m ( t ) ) \rightarrow \mathcal I ( t, B ( t ) )$ as $m \rightarrow \infty$. 
      Since there holds 
      \begin{equation}\label{eq:IB_C}
      \begin{split}
        \mathcal I ( t, B ( t ) )
        &= \tfrac{1}{\Hm^{n-1}(\Gamma(t))} \left( \int_{\Gamma(t)} ( \Phi^-_t B ) \cdot \nu^-(t) \,\mathrm{d}\Hm^{n-1}(x) \right)\\
        &= \tfrac{1}{\Hm^{n-1}(\Gamma(t))} \left( \int_{\Gamma(t)} b(t) \cdot \nu^-(t) \,\mathrm{d}\Hm^{n-1}(x) \right)\\
        &= C( t ),
      \end{split}
      \end{equation}
      we conclude that $t \mapsto C( t )$ is a measurable function.

 \proofstep
    In view of \eqref{eq:est_func_I} and \eqref{eq:IB_C}, there is some constant $D>0$ such that
    \begin{equation*}
      \int_0^T \left| C(t) \right|^2 \,\mathrm d t
      = \int_0^T \left| \mathcal  I ( t, B ( t ) ) \right|^2\,\mathrm d t
      \leq D \int_0^T \| B(t) \|_{L^2( \Gamma ( 0 ) )^n}^2\,\mathrm d t.
    \end{equation*}
    Hence we have $\| C \|_{L^2( 0,T )} \leq D \| B \|_{L^2( 0,T ; L^2( \Gamma ( 0 ) )^n)}$ and, as $B \in  L^2( 0,T ; L^2( \Gamma ( 0 ) )^n)$, it follows that $C \in L^2(0,T)$.
  This finishes the proof.
\setcounter{countproofstep}{0}
\end{proof}

The existence statement of Theorem~\ref{thm:reconstruction_pressure} and the preparatory Lemma~\ref{lem:fund_lem_divfree} now allow us to construct a pressure function satisfying the jump condition of the Young--Laplace law.

\begin{theorem}[Reconstruction of pressure]\label{thm:pressure_jump}
  Let Assumptions~\ref{ass:consistency} be satisfied.
  Then there exists a unique function $p \in L^2( 0,T ; L^2( \Omega ) )$ with the following properties.
  \begin{enumerate}
    \item $\left. p \right|_{\Omega^\pm} \in L^2( 0,T ; H^1(\Omega^\pm(t)))$.
    \item For a.e.\ $t\in (0,T)$, there holds $\int_{\Omega} p(t) \,\mathrm{d}x = 0$.
    \item $\nabla p =  - \beta_1 \partial_t v + \mu ( \beta_1 ) \Delta v - \beta_1 (v \cdot \nabla) v$ a.e.\ in $\Omega^-$.
    \item $\nabla p =  - \beta_2 \partial_t v + \mu ( \beta_2 ) \Delta v - \beta_2 (v \cdot \nabla) v$ a.e.\ in $\Omega^+$.
    \item $\left[ p \right] =  2 \left[ \mu( \rho ) D v  \nu^- \right] \cdot \nu^- + 2 \stc \kappa$ in $L^2( 0,T ; H^{\scriptstyle \frac{1}{2}}( \Gamma(t) ) )$.
  \end{enumerate}
\end{theorem}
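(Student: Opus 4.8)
The plan is to start from the associated pressure $\tilde p$ produced by Theorem~\ref{thm:reconstruction_pressure} and to readjust it phase by phase. Recall that, by Lemma~\ref{lem:extension_mean_curvature}, the extension $K$ equals $\nabla m$ on $\Omega^-$ and vanishes on $\Omega^+$, so the term $-2\stc K$ in \eqref{eq:nabla_p_minus} is already absent on $\Omega^+$ and is a pure gradient on $\Omega^-$. Hence I would set $p_0 = \tilde p + 2\stc m$ on $\Omega^-$ and $p_0 = \tilde p$ on $\Omega^+$; since $m \in L^\infty(0,T;H^1(\Omega^-(t)))$, the restrictions $p_0|_{\Omega^\pm}$ stay in $L^2(0,T;H^1(\Omega^\pm(t)))$ and, by construction, $p_0$ satisfies the bulk identities (3) and (4). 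This determines $p_0$ only up to additive functions of time $c^\pm(t)$ on each phase, and the whole point of the remaining argument is to pin these down.

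Second, I would extract a weak form of the jump. Testing (3)--(4) with $\psi \in C_0^\infty((0,T);C_{0,\sigma}^\infty(\Omega))$ and integrating by parts separately on $\Omega^\pm(t)$, the left-hand side $\int_0^T\!\int_{\Omega\setminus\Gamma(t)}\nabla p_0\cdot\psi$ collapses, because $\dive\psi=0$, to $-\int_0^T\!\int_{\Gamma(t)}[p_0]\,\psi\cdot\nu^-$. On the right-hand side I would use $2\dive(Dv)=\Delta v$ (valid since $\dive v=0$), Proposition~\ref{prop:cons:time_derivatives_part_int} for the time-derivative term, the integration by parts \eqref{eq:bulk_mult_psi_term3} for the viscous term, and $(v\cdot\nabla)v=\dive(v\otimes v)$ for the convective term; the two interface integrals carrying the normal velocity $V$ and $v\cdot\nu^-$ cancel by \eqref{eq:div_free_normal_velocity_v_weak}. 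The surviving bulk part is exactly the bulk of \eqref{eq:sim:def_G}, which by \eqref{eq:sim:G_vanishes} and the curvature identity \eqref{eq:regular_curvature_functional} equals $-2\stc\int_0^T\!\int_{\Gamma(t)}\kappa\nu^-\cdot\psi$. Collecting terms, I obtain
\begin{equation*}
  \int_0^T\!\int_{\Gamma(t)} b(t)\cdot\psi\,\mathrm d\Hm^{n-1}(x)\,\mathrm d t = 0,
  \qquad b = [p_0]\nu^- - 2\stc\kappa\nu^- - 2[\mu(\rho)Dv]\nu^-,
\end{equation*}
for all divergence-free $\psi$.

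Third, I would verify that $b \in L^2(0,T;H^{\frac{1}{2}}(\Gamma(t))^n)$: the traces of $p_0^\pm$ and of $Dv^\pm$ lie in $L^2(0,T;H^{\frac{1}{2}}(\Gamma(t)))$ by Assumptions~\ref{ass:consistency} and the regularity of $p_0$, while $\kappa$ and $\nu^-$ are smooth on the evolving interface. Lemma~\ref{lem:fund_lem_divfree} then applies and yields $b = C(t)\nu^-$ with $C \in L^2(0,T)$; equivalently, the scalar quantity $b\cdot\nu^- = [p_0] - 2\stc\kappa - 2[\mu(\rho)Dv\,\nu^-]\cdot\nu^-$ is constant in space on each $\Gamma(t)$. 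This is the crucial gain: because only a spatially constant defect survives, it can be removed by a function of time alone. I would therefore choose $c^+(t)-c^-(t) = -C(t)$, which turns the bulk-preserving adjustment $p = p_0 + c^\pm(t)$ into a pressure satisfying the Young--Laplace jump (5) while leaving (3)--(4) untouched; the one remaining free function of time is fixed by imposing $\int_\Omega p(t)\,\mathrm d x = 0$, which determines $c^-(t)\in L^2(0,T)$ explicitly and gives (1)--(2).

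Finally, uniqueness follows by a standard argument: the difference $q$ of two such pressures has vanishing gradient on each phase, hence reduces to a function of time on each of $\Omega^\pm(t)$; the common jump condition (5) forces these constants to coincide across $\Gamma(t)$, so $q$ is spatially constant on $\Omega$, and the zero-mean condition forces $q=0$. I expect the main obstacle to be the second step --- organising the interface integrals so that the transport terms cancel and the curvature functional reproduces $\kappa\nu^-$ --- together with the reliance on Lemma~\ref{lem:fund_lem_divfree} to guarantee that the leftover defect $C(t)$ is spatially constant, since it is precisely this fact that makes the phase-wise constant-in-time correction both possible and consistent with the remaining normalisation.
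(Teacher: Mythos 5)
Your proposal is correct and follows essentially the same route as the paper: adjust the associated pressure from Theorem~\ref{thm:reconstruction_pressure} by $2\stc m$ on $\Omega^-$ so that (3)--(4) hold, derive the weak interface identity by testing with divergence-free $\psi$ and using that $\mathcal{G}_{\mathrm{reg}}$ vanishes on such test functions, invoke Lemma~\ref{lem:fund_lem_divfree} to reduce the defect to a spatially constant $C(t)\nu^-$, and absorb it into phase-wise functions of time pinned down by the zero-mean normalisation, with the same uniqueness argument. (Your sign choice $p_0=\tilde p+2\stc m$ on $\Omega^-$ is the one consistent with \eqref{eq:nabla_p_minus} as stated; the paper's proof writes $-2\stc m$ at the corresponding point, which reflects a sign slip between \eqref{eq:nabla_p_minus} and its subsequent use rather than a difference of substance.)
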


\begin{proof}
  The uniqueness of $p$ is a direct consequence of the zero-mean condition.
  In the remainder we shall construct the desired function $p$ with the help of the functions $p^\pm$ from Theorem~\ref{thm:reconstruction_pressure} and the function $K= \nabla m \chi_{\Omega^-}$ from Lemma~\ref{lem:extension_mean_curvature}: 
  Consider the function 
  \begin{equation*}
    \tilde p =
    \begin{cases}
      p^- - 2 \stc m&\ \text{in} \ \Omega^-, \\
      p^+ &\ \text{in} \ \Omega^+.
    \end{cases}
  \end{equation*}
  Notice that, by Lemma~\ref{lem:extension_mean_curvature} and Theorem~\ref{thm:reconstruction_pressure}, $\left. \tilde p \right|_{\Omega^\pm}$ belongs to $L^2( 0,T ; H^1(\Omega^\pm(t)))$ and, in the almost-everywhere sense, there holds
  \begin{equation*}
  \begin{split} 
    \nabla \tilde p
    = \nabla p^- - 2 \stc \nabla m
    &=  - \beta_1 \partial_t v + \mu ( \beta_1 ) \Delta v - \beta_1 (v \cdot \nabla) v + 2 \stc K - 2 \stc \nabla m\\
    &=  - \beta_1 \partial_t v + \mu ( \beta_1 ) \Delta v - \beta_1 (v \cdot \nabla) v 
  \end{split}
  \end{equation*}
  in $\Omega^-$ and, likewise, $\nabla \tilde p = \nabla p^+
    =  - \beta_2 \partial_t v + \mu ( \beta_2 ) \Delta v - \beta_2 (v \cdot \nabla) v$ a.e.\ in $\Omega^+$.
  We remark that these properties remain valid for
  \begin{equation*}
    p =
    \begin{cases}
     \left. \tilde p \right|_{\Omega^-} + C^- &\ \text{in} \ \Omega^-, \\
     \left. \tilde p \right|_{\Omega^+} + C^+ &\ \text{in} \ \Omega^+
    \end{cases}
  \end{equation*}
  for arbitrary functions $C^-,C^+ \in L^2(0,T)$.
  Therefore, it is sufficient to prove that there exists some $C \in L^2(0,T)$ such that, for a.e.\ $t\in (0,T)$, there holds
  \begin{equation}\label{eq:jump_ptilde}
    [ \tilde p(t) ]
    = 2 [ \mu(\rho(t)) ( D v(t)  \nu^-(t)) \cdot \nu^-(t) ] -  2 \stc \kappa(t) + C(t)
  \end{equation}
  on $\Gamma(t)$.
  This is because the functions $C^-$ and $C^+$ provide two degrees of freedom: for a.e.\ $t\in (0,T)$, the first may be used to remove the function $C$ from the previous equation.
  For example, by making the choice $C^-= C$ and $C^+ = 0$, the function $p$ satisfies the desired jump condition.
  If $p$ does not have the zero-mean property, the second degree of freedom may be used to subtract its mean value. 
  \par
  Let $\psi \in C_0^\infty( (0,T) ; C_{0,\sigma}^\infty( \Omega ) )$.
  By the definition of $\tilde p$, there holds
  \begin{equation*}
    [ \tilde p ]
    = ( \tilde p )^+ -( \tilde p )^-
    = p^+ -  ( p - 2 \stc m )^-
    = [ p ] + 2 \stc \kappa.
  \end{equation*}
  This implies
  \begin{equation*}
  \begin{split}
    &\int_0^T \!\int_{\Gamma(t)} \left[ \tilde p \right]  \nu^- \cdot \psi \,\mathrm{d}\Hm^{n-1}(x) \,\mathrm{d}t\\
    ={}&- \int_0^T \!\int_{\Omega^-(t)} \nabla p^- \cdot \psi  \,\mathrm{d}x \,\mathrm{d}t
      - \int_0^T \!\int_{\Omega^+(t)} \nabla p^+ \cdot \psi  \,\mathrm{d}x \,\mathrm{d}t\\
     &+ 2 \stc \int_0^T \!\int_{\Gamma(t)} \kappa \nu^- \cdot \psi \,\mathrm{d}\Hm^{n-1}(x) \,\mathrm{d}t.
  \end{split}
  \end{equation*}
  In view of Proposition~\ref{prop:cons:div_free_normal_velocity} and Theorem~\ref{thm:reconstruction_pressure}, we conclude that
  \begin{equation*}
  \begin{split}
  &\int_0^T \!\int_{\Gamma(t)} ( \left[ \tilde p \right] - 2 \stc \kappa ) \nu^- \cdot \psi \,\mathrm{d}\Hm^{n-1}(x) \,\mathrm{d}t\\
    ={}& \int_0^T \!\int_{\Omega \setminus \Gamma(t)} \left( \rho \partial_t v  + \rho \dive (v \otimes v)  - 2 \mu ( \rho ) \dive( Dv )  - 2 \stc K \right) \cdot \psi  \,\mathrm{d}x \,\mathrm{d}t.
  \end{split}
  \end{equation*}
  Recalling that $v \in L^2( 0,T ; H^1( \Omega)^n )$ and applying integration by parts leads to 
  \begin{equation*}
  \begin{split}
    &\int_0^T \!\int_{\Gamma(t)} ( \left[ \tilde p \right] - 2 \stc \kappa ) \nu^- \cdot \psi \,\mathrm{d}\Hm^{n-1}(x) \,\mathrm{d}t\\
    ={}& \int_0^T \!\int_{\Omega \setminus \Gamma(t)} \rho \partial_t v \cdot \psi \,\mathrm{d}x \,\mathrm{d}t\\
    &+ \int_0^T \!\int_{\Omega} \left( \rho \dive (v \otimes v)  + 2 \mu ( \rho ) Dv : D\psi   - 2 \stc K \right) \cdot \psi  \,\mathrm{d}x \,\mathrm{d}t\\
    &+ \int_0^T \!\int_{\Gamma(t)} 2 \left[ \mu( \rho ) D v \right] \nu^-  \cdot \psi \,\mathrm{d}\Hm^{n-1}(x) \,\mathrm{d}t\\
   ={}& - \langle \mathcal{G}_{\mathrm{reg}} , \psi \rangle_{\mathcal{D}(\Omega \times (0,T))^n} + \int_0^T \!\int_{\Gamma(t)} 2 \left[ \mu( \rho ) D v \right] \nu^-  \cdot \psi \,\mathrm{d}\Hm^{n-1}(x) \,\mathrm{d}t,
  \end{split}
  \end{equation*}
  where $\mathcal{G}_{\mathrm{reg}}$ is given by \eqref{eq:def_G_reg}.
  Since $\langle \mathcal{G}_{\mathrm{reg}} , \psi \rangle_{\mathcal{D}(\Omega \times (0,T))^n} = 0$, in view of Proposition~\ref{prop:reg_G_reg}, we conclude that
  \begin{equation*}
    \int_0^T \!\int_{\Gamma(t)} \left( \left[ \tilde p \nu^- - 2 \mu( \rho ) D v  \nu^- \right] - 2 \stc \kappa \nu^- \right)\cdot \psi \,\mathrm{d}\Hm^{n-1}(x) \,\mathrm{d}t = 0.
  \end{equation*}
  
%
  Finally, in view of Lemma~\ref{lem:fund_lem_divfree}, there exists a function $C\in L^2( 0 ,T )$ such that \eqref{eq:jump_ptilde} is valid.
\end{proof}



\bibliographystyle{abbrv}

\end{document}